\theoremstyle{plain}
\newtheorem{theorem}{Theorem}[section]
\newtheorem{corollary}[theorem]{Corollary}
\newtheorem{lemma}[theorem]{Lemma}
\newtheorem{proposition}[theorem]{Proposition}
\newtheorem{remark}[theorem]{Remark}
\newtheorem{definition}[theorem]{Definition}
\newtheorem{example}[theorem]{Example}
\def \co*{{\text{c}_{\text{o}}^*}}
\def \D*{$\Delta_{1/2}$-condition}
\begin{document}

\title{B(H) Lattices, Density and Arithmetic Mean Ideals}
\author{Victor Kaftal}
\address{University of Cincinnati\\
          Department of Mathematics\\
          Cincinnati, OH, 45221-0025\\
          USA}
\email{victor.kaftal@uc.edu}
\author{Gary Weiss}\thanks{The first named author was partially
supported by The Charles Phelps Taft Research Center,
the second named author was partially supported by NSF Grants DMS 95-03062
      and DMS 97-06911 and by The Charles Phelps Taft Research Center.}
\email{gary.weiss@uc.edu}

\keywords{soft ideals, operator ideals, arithmetic means}
\subjclass{Primary: 47B47, 47B10; Secondary: 46A45, 46B45, 47L20}

\begin{abstract}
This paper studies lattice properties of operator ideals in B(H) and their applications to the arithmetic mean ideals which were introduced in \cite {DFWW} and further studied in the project \cite {vKgW02}-\cite{vKgW04-2nd order and cancellation} of which this paper is a part.
It is proved that the lattices of all principal ideals, of principal ideals with a generator that satisfies the $\Delta_{1/2}$-condition, of arithmetic mean stable principal ideals (i.e., those with an am-regular generator), and of arithmetic mean at infinity stable principal ideals (i.e., those with an am-$\infty$ regular generator) are all both upper and lower dense in the lattice of general ideals. This means that between any ideal and an ideal
(nested above or below respectively) in one of these sublattices, lies another ideal in that sublattice.
Among the applications: a principal ideal is am-stable (and similarly for am-$\infty$ stable principal ideals) if and only if any (or equivalently, all) of its first order arithmetic mean ideals are am-stable, such as its am-interior, am-closure and others.
A principal ideal $I$ is am-stable (and similarly for am-$\infty$ stable principal ideals) if and only if it satisfies any (equivalently, all) of  the first order equality cancellation properties, e.g, $J_a = I_a \Rightarrow J= I$. These cancellation properties can fail even for am-stable countably generated ideals.
It is proven that while the inclusion cancellation $J_a \supset I_a$ implies $J \supset I$ does not hold in general, even for $I$ am-stable and principal, there is always a largest ideal $\widehat I$ for which $J_a \supset I_a  \Rightarrow J \supset \widehat I$.
Furthermore, if $I = (\xi)$ is principal, then $\widehat I$ is principal as well
and $\widehat I = (\widehat \xi)$ for a sequence $\widehat \xi$ optimal in the majorization sense, that is, $\eta \ge \widehat \xi$ asymptotically for all monotone nonincreasing $\eta \in c_o$ for which $\sum_{1}^{n}\eta_j \ge \sum_{1}^{n}\xi_j$ for every $n$ (i.e., with arithmetic means $\eta_a \ge \xi_a$).
In particular, $\widehat {\omega^{1/p}} =  \omega^{1/p'}$ for the harmonic sequence $\omega$, $0<p<1$, and $1/p - 1/p' = 1$.
\end{abstract}
\maketitle

\section{\leftline{Introduction}}\label{S:1}

Operators ideals, the two-sided ideals of $B(H)$, have played an important role in operator theory and operator algebras since they were first studied by J. Calkin \cite{jC41} in 1941.

Many of the questions in the theory have involved the structure of the lattice $\mathscr L$ of all operator ideals and of its distinguished sublattices, like the lattice $\mathscr PL$ of all principal ideals. For instance, one basic question due to Brown, Pearcy and Salinas \cite{BPS71} was whether or not the ideal of compact operators $K(H)$ was the sum of two proper ideals. This was settled affirmatively in \cite{aBgW78} for any proper ideal strictly larger than $F$ (the ideal of finite rank operators) using the continuum hypothesis and the techniques employed led to the set-theoretic concept of groupwise density that has proved useful in point-set topology and abelian group theory (\cite{aB89}, \cite {aB90}, \cite {hM01}). Other work by Salinas (e.g., \cite{nS74}) studied increasing or decreasing nests of special classes of symmetric norm ideals (Banach ideals).

Another central topic in the theory was the study of commutator spaces (also called commutator ideals) on which much work was done from the early years. More recently, the introduction of cyclic cohomology in the 1980's by A. Connes (e.g., see \cite {aC94}) and the connection  with algebraic K-theory by M. Wodzicki in the 1990's (e.g., see \cite {mW94}) provided a powerful motivation for further work on operator ideals and in particular on commutator  spaces.
Arithmetic means of monotone sequences were first connected, albeit only implicitly, to commutator spaces in \cite{gW75} 
(see also \cite{gW80}-\cite{gW86} and survey \cite{gW05}) and then explicitly for the trace class in \cite {nK89}. They provided the key tool for the  full characterization of commutator  spaces achieved in \cite{DFWW} in terms of arithmetic mean operations on ideals. This lead to the definition of a number of  arithmetic mean ideals derived from an ideal $I$: the arithmetic mean $I_a$, the pre-arithmetic mean $_aI$, the am-interior $I^o$ and the am-closure $I^-$.  After \cite{DFWW}, the connection between arithmetic
means and operator ideals were further studied in \cite{nK98}, \cite{kDnK98}, \cite{kDnK05}  and in an ongoing program by the authors of this   paper which was announced in \cite{vKgW02} and which includes \cite{vKgW04-Traces}-\cite{vKgW04-2nd order and cancellation}.

 At the beginning of this project it soon became apparent that to investigate questions such as  ``how many traces can an ideal support'', i.e., the codimension of the commutator space of the ideal, which question formed the main focus of  \cite {vKgW04-Traces},
  a more systematic  study of the actions of the arithmetic mean operations on operator ideals was necessary and the action of   the arithmetic mean at infinity needed also to be introduced formally (see Section 2 herein for definitions). This program was carried out in part in \cite {vKgW04-Traces}  and further in \cite {vKgW04-Soft}, but many questions, and in particular those involving arithmetic mean cancellation properties, led to an analysis of the relation between operator lattices and arithmetic mean operations. The goal of this paper is to provide this analysis and its applications.

Section 2 introduces the notations and some of the necessary
preliminaries for arithmetic mean ideals.

In Section 3 we study the density properties of ideal sublattices of $\mathscr L$.
Given two lattices $\mathscr L_1 \subset \mathscr L_2$, $\mathscr L_1$ is said to be upper dense in $\mathscr L_2$ if for every pair of ideals $I_1\subsetneq I_2$ with $I_i \in \mathscr L_i$, there is an intermediate ideal $I_1\subsetneq L \subsetneq I_2$ with $L \in \mathscr L_1$.
The notion of lower density is similar. We consider the following sublattices of $\mathscr L$.
(See the next section for their precise definitions.)\\
$\bullet~ \mathscr PL$: principal ideals,\\
$\bullet~\mathscr {L}_{\aleph_o}$: countably generated ideals,\\
$\bullet~\Delta_{1/2}\mathscr {PL}$: principal ideals with a generator
that satisfies the $\Delta_{1/2}$-condition, \\
$\bullet~\mathscr {SPL}$: principal ideals with an am-regular generator, \\
$\bullet~\mathscr {S_\infty PL}$ principal ideals with an am-$\infty$
regular generator.

A first result, which is then used throughout the paper, is that
$\mathscr PL$ and $\mathscr {L}_{\aleph_o}$ are upper dense in $\mathscr {L}$ (Corollary \ref {C: count gen upper density}).
These lattices are, however, not lower dense in $\mathscr {L}$ as every
nonzero principal ideal $I$ has a gap undeneath it, i.e., an ideal
$M\subsetneq I$ with no other ideals in-between (Corollary \ref{C: gaps princ}).

The main technical results of this section and of the paper are that
$\Delta_{1/2}\mathscr {PL}$,
  $\mathscr {SPL}$, and $\mathscr {S_\infty PL}$ are both upper and
lower dense in $\mathscr PL$
(Theorems  \ref{T: Delta1/2-PL strong density and strong gaps},
\ref{T: SPL density in PL} and \ref{T: SinftyPL density}).
The proofs require us
to construct between a ``nice'' sequence (the
s-numbers of the generator)
  and a possibly ``bad'' but  comparable sequence, a new ``nice''
intermediate sequence that is inequivalent (in
  the ideal sense) to either of the two given sequences. The key tool
in the case of $\mathscr {SPL}$ and
  $\mathscr {S_\infty PL}$ are the Potter-type conditions satisfied by
the regular or $\infty$-regular sequences.
  For the am-case, these are just the properties of the Matuszewska
$\beta$ -index studied in \cite [Theorem 3.10]{DFWW}. For
  the am-$\infty $ case, these are the properties of the Matuszewska
$\alpha$-index and were introduced in
\cite [Theorem 4.12]{vKgW04-Traces}.

In Section 4 we show that a Banach ideal is am-stable if and only if it is the union
of am-stable principal ideals (Proposition \ref{P:complete}). While it is clear that every ideal is the union of the principal ideals that it contains, it is not known which ideals are the union of an increasing chain of principal ideals. Assuming the continuum hypothesis, we show that every ideal is the union of an increasing chain of countably generated ideals, and we prove that if the ideal is the power of a Banach ideal then it is also the union of an increasing chain of principal ideals (see Proposition \ref{P: nested unions and the continuum hypothesis}).

In Section 5 we use the density results from Section 3 to analyze the relation between the properties of an ideal and those of its first order arithmetic mean ideals (resp.,  arithmetic mean ideals at infinity, see the next section for definitions).
The first results (Theorems \ref {T: I am-stable iff} and \ref {T: I am-inf stable iff})
are: for principal  ideals, the am-stability (resp., am-$\infty$ stability) of $I$ is equivalent to
  the am-stability of any of its first order arithmetic mean ideals (resp., first order arithmetic mean ideals at infinity). What is perhaps interesting is that this ``rigidity'' property does not extend even to countably generated ideals: Examples \ref{E:N} and \ref {E:L} exhibit ideals   that are not am-stable but have various first-order am-ideals that are am-stable.

In Section 6 we apply density to investigate first order cancellation properties for arithmetic mean operations.
(Second order cancellation properties are more complex and are studied in \cite{vKgW02} and \cite{vKgW04-2nd order and cancellation}.)
Equality cancellation questions ask: Under which conditions on an ideal $I$, does $J_a=I_a$ imply $J=I$? (and similarly for the other derived am and am-$\infty$ ideals).
Inclusion cancellation questions ask: Under which conditions on an ideal $I$ does $I_a \subset J_a$ imply $I\subset J$, or $J_a\subset I_a$ imply $J\subset I$?
(and similarly for the other derived am and am-$\infty$ ideals).
Beyond the elementary cancellations
(e.g., for fixed $I$, $J_a\subset I_a  \Rightarrow J\subset I$ if and only if $I$ is am-closed (see Lemma \ref{L:closed open})), we know of no natural conditions for general am-cancellation to hold.  Examples \ref{E:N} and \ref{E:L} show that the equality am-cancellations can fail even for am-stable countably generated ideals.

For principal ideals however, the necessary and sufficient condition for equality cancellations is that the principal ideal $I$ is am-stable 
(resp., am-$\infty$ stable) (see Theorem \ref{T: cancellation}).
One key ingredient in the proof is the fact that for countably generated ideals,
and hence for principal ideals, am-stability is equivalent to am-closure,
and similarly for the am-${\infty}$ case (see \cite[Theorems 2.11 and 3.5]{vKgW04-Soft}).
For other cancellations we need instead a technical result that shows that if a principal ideal is am-open but not am-stable, then it properly contains and is properly contained in two other principal ideals that share with it, respectively, the same am-open envelope and the same am-interior, and similarly for the am-$\infty$ case (Proposition \ref{P:open'}).

For the nontrivial inclusion cancellations, am-stability is not sufficient even in the principal ideal case. For instance, for every principal ideal $I$, there is a principal ideal $J$ with $_aJ\subset\,_aI$ but $J\not\subset I$ (Proposition \ref{P:open} ), i.e., no cancellation whatsoever is possible. \linebreak

For the cancellation $J_a \supset I_a \Rightarrow J \supset I$, a stronger condition than am-stability is required for the ideal $I$. Indeed, we show that for every ideal $I$ there is an ``optimal'' ideal $\widehat I$ for which $J_a \supset I_a  \Rightarrow J \supset\widehat I$.  If   $I=(\xi)$, then $\widehat{(\xi)}= (\widehat{\xi})$. Furthermore, the sequence $\widehat{\xi}$ is also optimal in the majorization sense, i.e., if  $\eta_a \ge \xi_a$ then $\eta \ge \widehat{\xi}$ asymptotically (Theorem \ref {T: 4}).
Sequences for which $(\xi)= (\widehat{\xi})$ are necessarily regular (Proposition \ref{P:Gg is stable}) and they form a distinguished proper subclass of the regular sequences.
Indeed, for the harmonic sequence $\omega$, $\widehat{\omega^p} \asymp \omega^{p'}$, where $0 < p < 1$ and $\frac{1}{p}-\frac{1}{p'} = 1$  (Corollary \ref{C: 6}). Thus $\omega^{1/2}$ is regular but $\omega^{1/2}\not\asymp \widehat{\omega^{1/2}} \asymp \omega$.
This result is linked to the single commutator problem implicit in \cite [Chapter 7]{DFWW}
on whether containment in the class of single $(I,B(H))$-commutators, $[I,B(H)]_1$,
of a finite rank nonzero trace operator implies $\omega^{1/2} \in \Sigma(I)$.

\section{\leftline{Notations and Preliminary results}}\label{S:2}

Calkin \cite{jC41} established the inclusion preserving lattice isomorphism $I \rightarrow \Sigma(I) := \{s(X) \mid X \in I\}$ between two-sided ideals $I$ of $B(H)$, the algebra of bounded linear operators on a separable   infinite-dimensional complex Hilbert space $H$ and the characteristic sets $\Sigma \subset \co*$, i.e., the hereditary (solid) ampliation invariant (see below) subcones of the collection $\co*$ of sequences decreasing to $0$. Here $s(X)$ denotes the sequence of the s-numbers of the compact operator $X$ and the inverse of this correspondence maps a characteristic set $\Sigma$ to the ideal generated by the collection of diagonal operators $\{diag\,\xi \mid \xi \in \Sigma\}$.

Two sequence operations, the arithmetic mean acting on $\co*$ and the
arithmetic mean at infinity acting on $(\ell^1)^*$ (the monotone noncreasing summable
sequences)  respectively,
\[\xi_a := \left<\frac{1}{n}\sum_1^n \xi_j\right>
  \quad \text{and} \quad \xi_{a_\infty} :=
\left<\frac{1}{n}\sum_{n+1}^\infty \xi_j\right> \]
are essential for the study of commutator spaces (i.e., commutator ideals) and hence traces on ideals, as mentioned in the introduction (e.g., see \cite{DFWW} and \cite{vKgW02}-\cite{vKgW04-Soft}).
For the readers' convenience we list the definitions and first properties from \cite[Sections 2.8 and 4.3]{DFWW} of the ideals derived via arithmetic mean operations (am-ideals for short).

If $I$ is an ideal, then the arithmetic mean ideals $_aI$ and $I_a$,
called respectively the
  \textit{pre-arithmetic mean} and \textit{arithmetic mean} of $I$,
are the ideals with characteristic sets
\[
\Sigma(_aI) := \{\xi \in \co* \mid \xi_a \in \Sigma(I)\}
\]
\[
\Sigma(I_a) := \{\xi \in \co* \mid \xi = O(\eta_a)~\text{for some}~
\eta \in \Sigma(I)\}.
\]
The  \textit{arithmetic mean-closure} $I^-$ and \textit{arithmetic
mean-interior} $I^o$ of an ideal
(am-closure and am-interior for short) are defined as
\[
I^- :=\, _a(I_a) \qquad \text{and} \qquad I^o := (_aI)_a.
\]
For any ideal $I$, the following \textit{5-chain of inclusions} holds:
\[
_aI \subset I^o \subset I \subset I^- \subset I_a
\]
and the identites
\[
I_a=(_a(I_a))_a \qquad \text{and} \qquad _aI= \,_a((_aI)_a).
\]
Other first order arithmetic mean ideals derived from a given ideal
$I$ and introduced in \cite{vKgW02}, \cite{vKgW04-Soft} are the
largest am-closed ideal $I_-$ contained in $I$ and the smallest
am-open ideal $I^{oo}$ containing $I$.

We take this opportunity the make a shift in the terminology introduced in \cite{vKgW02}.
There we called 2nd order arithmetic mean ideals those obtained from applying twice the basic am and pre-am operations to an ideal,
e.g., the am-closure $_a(I_a)$ and the am-interior $(_aI)_a$.
In later work we found a commonality of properties between the ideals $I_a$, $_aI$, $_a(I_a)$, $(_aI)_a$ and the ideals $I_-$ and $I^{oo}$ mentioned above
that motivated us to now call them all first order am-ideals, while ideals of the kind $I_{a2}$, $_{a2}(I_{a2})$, and so on will be called 2nd order am-ideals.

As a consequence of one of the main results in \cite{DFWW}, an ideal is am-stable (i.e., $I=\,_aI$, or equivalently, $I=I_a$)
if and only if $I=[I,B(H)]$ if and only if $I$ supports no nonzero trace.
In particular, ideals contained in the trace class $\mathscr L_1$ cannot be am-stable.
For them, the arithmetic mean at infinity is the relevant operation.
It was often used as a sequence operation in connection with operator ideals (see for instance \cite{hM01}, \cite{nK87}, \cite{DFWW}, \cite {mW02})
and its action on operator ideals was studied in \cite[Section 4]{vKgW04-Traces}.
For the readers' convenience we list the definitions and first properties.

If $I$ is an ideal, then the arithmetic mean at infinity ideals
$_{a_\infty}I$ and $I_{a_\infty}$
  are the ideals with characteristic sets
\[
\Sigma(_{a_\infty}I) := \{\xi \in (\ell^1)^* \mid \xi_{a_\infty} \in
\Sigma(I)\}
\]
\[
\Sigma(I_{a_\infty}) := \{\xi \in \text{c}_{\text{o}}^{*} \mid \xi =
  O(\eta_{a_\infty})  ~\text{for some}~ \eta \in \Sigma(I \cap \mathscr L_1)\},
\]
and the other derived  am-$\infty$ ideals are the am-$\infty$ closure
$I^{-\infty} :=\, _{a_\infty}(I_{a_\infty})$,
the  am-$\infty$ interior $I^{o\infty} := (_{a_\infty}I)_{a_\infty}$,
the largest am-$\infty$ closed ideal,
$I_{-\infty}$, contained in $I$ and the smallest am-$\infty$ open ideal, $I^{oo\infty}$, 
containing $I\cap se(\omega)$ where
$\omega = <\frac{1}{n}>$ is the harmonic sequence and $se(\omega)$ is
the ideal whose characteristic set consists of sequences
$o(\omega)$.

The am-$\infty$ analog of the 5-chain of inclusions are
\[
_{a_\infty}I \subset I^{o\infty} \subset I \cap se(\omega) \qquad \text{and} \qquad
I  \cap \mathscr L_1 \subset I^{-\infty}\subset  I_{a_\infty} \cap \mathscr L_1
\]
and the following identites hold
\[
I_{a_{\infty}}=(_{a_{\infty}}(I_{a_{\infty}}))_{a_{\infty}} \qquad \text{and} \qquad _{a_{\infty}}I= \,_{a_{\infty}}((_{a_{\infty}}I)_{a_{\infty}}).
\]
An ideal $I$ is am-$\infty$ stable ($I=\,_{a_\infty}I$ or,
equivalently, $ I\subset \mathscr{L}_1$ and $I=I_{a_\infty}$)
if and only if $I = F +[I, B(H)]$ if and only if $I$ supports a
nonzero trace unique up to scalar multiples
(see \cite[Theorem 6.6]{vKgW04-Traces}).

For every ideal $I$, the lower and upper stabilizers
\[ st_a(I) := \bigcap_{m=0}^\infty \,_{a^m}I \subset I  \subset st^a(I) := \bigcup_{m=0}^\infty I_{a^m} \]
are, respectively, the largest am-stable ideal contained in ideal $I$ (possibly $\{0\}$)
and the smallest am-stable ideal containing $I$.
In particular, $st^a(F) = st^a(\mathscr L_1) = st^a((\omega))$ is the smallest am-stable ideal ($K(H)$ is the largest).
For the am-$\infty$ case and $I \ne \{0\}$,
\[\{0\} \ne st_{a_\infty}(I) := \bigcap_{m=0}^\infty \,_{a_\infty^m}I \subset I\]
and if $I \subset \mathscr L_1$, then
\[I \subset \bigcup_{m=0}^\infty I_{a_\infty^m}\]
are the largest am-$\infty$ stable ideal contained in $I$
and the smallest am-$\infty$ stable ideal containing $I$, respectively.
In particular, $F$ is the smallest am-$\infty$ stable ideal and the largest am-$\infty$ stable ideal is $st_{a_\infty}(\mathscr L_1) = st_{a_\infty}((\omega))$ with characteristic set \[\{\xi \in \co* \mid \sum \xi_n\,log^pn < \infty ~\text{for all}~ p>0\}.\]
(See \cite[Definition 4.16-Proposition 4.18 including proof]{vKgW04-Traces}.)

Of particular interests-and the main focus of this paper-are principal ideals.
If $X\in B(H)$ is the generator of the ideal $I$ and $\xi = s(X)$ is the s-number sequence of $X$,
then the diagonal operator $diag \,\xi$ is also a generator of $I$,
thus we will denote $I := (\xi)$, and by abuse of language,
we will simply say that a sequence $\xi\in \co*$ generates the ideal $(\xi)$.
The characteristic set $\Sigma((\xi))$ of a principal ideal $(\xi)$ consists of the sequences $\eta\in\co*$
for which $\eta = O(D_m(\xi))$ for some $m\in \mathbb{N}$, where the $m$-fold ampliation is
\[
\text{c}_{\text{o}}^* \owns \xi \rightarrow
D_m\xi:=\,<\xi_1,\dots,\xi_1,\xi_2,\dots,\xi_2,\xi_3,\dots,\xi_3,\dots>
\]
with each entry $\xi_i$ of $\xi$ repeated $m$-times.
More generally, $(D_t\xi)_n:=\xi\lceil \frac{n}{t}\rceil$ for all $t>0$, where $\lceil \frac{n}{t}\rceil$, denotes the smallest integer majorizing $\frac{n}{t}$.
A sequence is said to satisfy the \D* if $D_2\xi = O(\xi)$ (equivalently, if $\xi \asymp D_m\xi$  for all $m\in \mathbb N$).
In particular, if at least one of the generators satisfies the \D* condition, then $(\xi) \subset (\eta)$ if and only if $\xi = O(\eta)$.
Now, a principal ideal $(\xi)$ is am-stable if and only if the sequence $\xi$ is regular
(i.e., $\xi_a = O(\xi)$) since $(\xi)_a = (\xi_a)$ and every arithmetic mean sequence satifies the \D*.
Less trivially  since  $\xi_{a_\infty}$ in general does not satisfy the \D*, it is also true  that $(\xi)$ is am-$\infty$ stable if and only if
$\xi_{a_\infty} = O(D_m\xi)$ for some $m \in \mathbb N$ if and only if the sequence $\xi$ is $\infty$-regular
(i.e., $\xi_{a_\infty} = O(\xi)$) (see \cite[Theorem 4.12]{vKgW04-Traces}).

Operator ideals form a lattice $\mathscr{L}$ with inclusion as partial order and intersection (resp., sum) \linebreak
as meet (resp., join).
To avoid tedious disclaimers herein, we assume that $\mathscr{L}$ does not include the zero ideal $\{0\}$.
Since for any $\xi,\eta \in \co*$, $(\xi) \cap (\eta) = (\min(\xi,\,\eta) )$ and $(\xi) + (\eta) = (\xi+\eta )$,
the collection of all principal ideals forms a sublattice which we denote by $\mathscr{PL}$.
Since the intersections and sums of two am-stable (resp., am-$\infty$ stable) ideals is easily seen also to be am-stable (resp., am-$\infty$ stable),
the collection of all am-stable (resp., am-$\infty$ stable) principal nonzero ideals forms a lattice that we denote by $\mathscr{SPL}$
(resp., $\mathscr{S_{\infty}PL}$). Similarly, the collection of all nonzero principal ideals having a generator sequence that satisfies the \D* forms a lattice denoted by $\Delta_{1/2}\mathscr{PL}$.
Finally we will consider the lattice $\mathscr{L}_{\aleph_o}$ of
countably generated  nonzero ideals. Here too, we will say
by abuse of language that an ideal is generated by a countable
collection of sequences in $\co*$, meaning that it
is generated by the corresponding diagonal operators. Notice that if
$I$ is generated by the sequences $\eta^{(k)}$, then
$\eta\in\Sigma(I)$ if and only if $\eta = O(D_m(\eta^{(1)}+
\eta^{(2)}+ ... + \eta^{(k)})$ for some $m, k \in \mathbb N$.

\section{\leftline {The general ideal lattice, sublattices,
density and gaps}}\label{S:3}

\begin{definition}\label{L: density, strong density and gaps}
\item[(i)] A gap in a lattice of ideals is a nested pair of ideals in the lattice,
$I \subsetneq J$,  between which there is no ideal in
the lattice.
An upper (resp., lower) gap for an ideal $I$ is a gap
$I \subsetneq J$ (resp., $J \subsetneq I$).
\item[(ii)] For two nested lattices $\mathscr L' \subset \mathscr
L''$, $\mathscr L'$ is upper dense (resp.,
lower dense) in $\mathscr L''$ provided that
between every pair of ideals $I \in \mathscr L'$ and  $J\in \mathscr
L''$ with $I\subsetneq J$ (resp., $J\subsetneq I$) lies another ideal in the smaller lattice $\mathscr L'$.
\end{definition}

Although by assumption all ideals in a lattice are nonzero, we still say that $\{0\} \subset I$ is a gap in the lattice if there is no ideal $L$ in the lattice with $\{0\} \subsetneq L  \subsetneq I$, e.g., $F$ has a lower gap in $\mathscr {L}$.\\

First we discuss gaps in $\mathscr {L}$.

\begin{lemma}\label{L: gaps}
\item[(i)] A pair of ideals $I \subsetneq J$ is a gap in $\mathscr
{L}$ if and only if $J=I+(\xi)$
for some principal ideal $(\xi)$ and $I$ is maximal among the ideals $I\subset
L \subset J$ such that
$(\xi) \not\subset L$.
\item[(ii)] An ideal $J$ has a lower gap in $\mathscr {L}$ if and
only if $J$ can be
  decomposed into $J=N+(\xi)$ with $N$ a (possibly zero) ideal and
$(\xi)\not\subset N$.
\item[(iii)] An ideal $I$ has an upper gap in $\mathscr {L}$ if and
only if there is a principal ideal
$(\xi) \not\subset I$ such that if $I \subset I+(\eta)\subset
I+(\xi)$ for some principal ideal $(\eta)$, then
either  $I = I+(\eta)$ or $I+(\eta) = I+(\xi)$.
\end{lemma}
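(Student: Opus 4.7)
The plan is to establish part (i) as the core characterization and then derive (ii) and (iii) from it using routine lattice-theoretic reasoning together with one application of Zorn's lemma. Throughout, I would lean on the Calkin correspondence, in particular the basic fact that $(\xi) \subset L$ if and only if $\xi \in \Sigma(L)$, and that the characteristic set of a union of an increasing chain of ideals is the union of the characteristic sets.

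For (i), in the forward direction I would pick any $\xi \in \Sigma(J) \setminus \Sigma(I)$, which exists since $I \subsetneq J$. Then $I \subsetneq I + (\xi) \subset J$, so the gap hypothesis forces $I + (\xi) = J$. Maximality is automatic: any ideal $L$ with $I \subset L \subset J$ lies in the (gap) interval and so must equal $I$ or $J$, and $L = J$ contradicts $(\xi) \not\subset L$. For the reverse direction, I would suppose an ideal $M$ satisfies $I \subsetneq M \subsetneq J$; if $(\xi) \subset M$ then $M \supset I + (\xi) = J$, a contradiction, and otherwise the maximality of $I$ is violated.

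Part (ii) is a short consequence. The forward direction sets $N := I$ from the decomposition in (i), noting $(\xi) \not\subset I$ (else $J = I$). For the reverse, given $J = N + (\xi)$ with $(\xi) \not\subset N$, I would apply Zorn's lemma to the family $\mathcal{P}$ of ideals $L$ (allowing $L = \{0\}$ in the degenerate case $N = \{0\}$) satisfying $N \subset L$, $L \subsetneq J$, and $(\xi) \not\subset L$. This family contains $N$ and is closed under unions of chains, since the union of such a chain is an ideal, and $\xi \notin \Sigma(L_\alpha)$ for every $\alpha$ passes to $\xi \notin \bigcup \Sigma(L_\alpha) = \Sigma(\bigcup L_\alpha)$, which also rules out $\bigcup L_\alpha = J$. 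A maximal element $I$ then satisfies $I + (\xi) \supset N + (\xi) = J$, hence $I + (\xi) = J$, and (i) identifies $I \subsetneq J$ as a gap.

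For (iii), the forward direction is immediate from (i): a gap $I \subsetneq I + (\xi)$ leaves no strictly intermediate ideals at all, in particular none of the form $I + (\eta)$. For the reverse, I would argue by contradiction: if some ideal $M$ satisfies $I \subsetneq M \subsetneq I + (\xi)$, then choosing $\eta \in \Sigma(M) \setminus \Sigma(I)$ yields a principal ideal $(\eta) \subset M$ with $(\eta) \not\subset I$, so $I \subsetneq I + (\eta) \subset I + (\xi)$. The hypothesis applied to $(\eta)$ then forces $I + (\eta) = I + (\xi)$, contradicting $I + (\eta) \subset M \subsetneq I + (\xi)$. The only mildly delicate step in the whole argument is the Zorn step in (ii), where one must verify the chain closure and handle the $N = \{0\}$ case consistent with the convention that $\{0\}$ may serve as the lower endpoint of a gap in $\mathscr{L}$; everything else is a bookkeeping translation between ideals and characteristic sets.
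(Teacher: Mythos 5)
Your proposal is correct and follows essentially the same route as the paper: part (i) by the same direct argument with a $\xi\in\Sigma(J)\setminus\Sigma(I)$, part (ii) by a maximality argument on the family of ideals between $N$ and $J$ not containing $(\xi)$ (you invoke Zorn's lemma where the paper uses the equivalent Hausdorff Maximality Principle, with the same chain-closure check via characteristic sets), and part (iii) by extracting a principal ideal $(\eta)\subset L\setminus I$ from any intermediate ideal. The minor points you flag (the $N=\{0\}$ convention, $(\xi)\not\subset I$ forcing properness) are handled consistently with the paper.
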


\begin{proof}
\item[(i)] If $J=I+(\xi)$ for some principal ideal $(\xi)$ and
$I\subset L \subset J$, then
$L \neq J$ if and only if $(\xi) \not\subset L$, thus $I \subset
J=I+(\xi)$ is a gap in $\mathscr {L}$
precisely when $I$ is maximal among the ideals $I\subset L \subset J$ such that
$(\xi) \not\subset L$. On the other hand, if $I \subset J$ is a gap
in $\mathscr {L}$, then for every
$\xi \in \Sigma (J)\setminus \Sigma (I) $ it follows that  $J=I+(\xi)$.
\item[(ii)] The condition is necessary by (i).
If $J=N+(\xi)$ with $N$ a (possibly zero) ideal and $(\xi)\not\subset N$, then
the collection $\mathscr C$  of all the ideals $L$ with $ N\subset L \subset
J$ and $L$ does not contain $(\xi)$ includes at least $N$.
It is immediate to see that the union of every chain in $\mathscr C$
is also an element of
$\mathscr C$. Let $M$ be the union of a maximal chain in $\mathscr
C$, which must exist by
  the Hausdorff Maximality Principle. Then by part (i), $M \subsetneq
J$ is a gap.
\item[(iii)] By (i) the condition is necessary. Indeed, if $I$ has an
upper gap in $\mathscr {L}$, the gap is of the form $I\subsetneq
I+(\xi)$ for some
principal ideal $(\xi)$. But then, any intermediate ideal $I +(\eta)$
must coincide with either $I$ or $I+(\xi)$.
The condition is also sufficient. Indeed, if $I\subset L \subset
I+(\xi)$ for some ideal $L$, then if $I \not = L$ then there is
a principal ideal $(\eta) \subset L\setminus I$ and hence $I
\subsetneq I+(\eta) \subset L \subset I+(\xi)$ and from
$I+(\eta) = I+(\xi)$ it follows that $L=I+(\xi)$, that is, $I
\subsetneq I+(\xi)$ forms a gap in $\mathscr L$.
\end{proof}

\noindent As a consequence, between any two nested ideals $I \subsetneq J$
  there is always a gap. Indeed, if $\xi \in \Sigma(J)\setminus
\Sigma(I) $, then by
  the above lemma there is gap between $I$ and $I+(\xi) \subset J$.
Another consequence of the lemma is the following corollary that lower
gaps in $\mathscr L$ can never be unique.

\begin{corollary}\label{C: gaps are never unique}
If $(\xi) \not \subset M$, then there is a gap in $\mathscr {L}$,
$N\subsetneq M+(\xi)$, with $N \not\subset M$.
Consequently, if $M\subsetneq J$ is a gap in $\mathscr {L}$, then
there is another gap $N\subsetneq J$
  in $\mathscr {L}$ with $M\neq N$.
\end{corollary}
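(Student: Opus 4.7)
The plan is to apply Lemma~\ref{L: gaps}(ii) to $J := M + (\xi)$ via a carefully chosen decomposition $J = N_0 + (\eta)$ with $N_0 \not\subset M$ and $(\eta) \not\subset N_0$: the Hausdorff maximality argument in the proof of Lemma~\ref{L: gaps}(ii) will then yield a gap $N \supseteq N_0$, automatically satisfying $N \not\subset M$.

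First assume $M \not\subset (\xi)$, so $M \cap (\xi) \subsetneq M$. Since $M \neq \{0\}$, Lemma~\ref{L: gaps}(ii) furnishes a coatom of $M$, and a further application of the Hausdorff maximal principle to ideals strictly between $M \cap (\xi)$ and $M$ shows that we may pick a coatom $M' \subsetneq M$ with $M' \supseteq M \cap (\xi)$. Let $\eta \in \Sigma(M) \setminus \Sigma(M')$ realize the decomposition $M = M' + (\eta)$ afforded by Lemma~\ref{L: gaps}(i), and set $N_0 := M' + (\xi)$. Then $N_0 \supseteq (\xi) \not\subset M$ gives $N_0 \not\subset M$; the identity $N_0 + (\eta) = M' + (\xi) + (\eta) = M + (\xi) = J$ is immediate; and $(\eta) \not\subset N_0$ because otherwise $\eta \le C(\alpha + D_m \xi)$ with $\alpha \in \Sigma(M')$, and the standard sequence-splitting $\eta = \min(\eta, C\alpha) + (\eta - \min(\eta, C\alpha))$ would place the second summand in $\Sigma(M \cap (\xi)) \subseteq \Sigma(M')$, forcing $\eta \in \Sigma(M')$, contrary to choice. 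Lemma~\ref{L: gaps}(ii) applied to $J = N_0 + (\eta)$ then produces the desired gap.

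In the remaining case $M \subseteq (\xi)$, so that $J = (\xi)$: if $M \subsetneq (\xi)$ is not itself a gap, pick an intermediate $L$ with $M \subsetneq L \subsetneq (\xi)$ and apply the Hausdorff maximality argument of Lemma~\ref{L: gaps}(ii) to the collection $\{L' : L \subseteq L' \subsetneq (\xi), (\xi) \not\subset L'\}$ to obtain a gap $N \supseteq L \supsetneq M$. Otherwise $M$ is a coatom of $(\xi)$, and the task reduces to exhibiting another coatom of $(\xi)$ incomparable to $M$. This last step—showing that no principal ideal on an infinite-rank sequence has a unique coatom—is the main obstacle of the proof, and is handled by a direct construction of pairwise incomparable proper principal subideals of $(\xi)$, for instance via sparse subsequences $(\xi_{f(n)})$ for different rapidly growing $f$ together with distinct thinning patterns.

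The second assertion is then immediate: a gap $M \subsetneq J$ has $J = M + (\xi)$ with $(\xi) \not\subset M$ by Lemma~\ref{L: gaps}(i), and the first assertion produces a gap $N \subsetneq J$ with $N \not\subset M$, necessarily distinct from $M$.
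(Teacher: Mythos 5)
There is a genuine gap, and it occurs at the very first step of your main case. You assert that, since $M \neq \{0\}$, Lemma~\ref{L: gaps}(ii) ``furnishes a coatom of $M$'' (and then one containing $M \cap (\xi)$). But that lemma does not assert existence: it says $M$ has a lower gap \emph{if and only if} $M = N + (\eta)$ for some ideal $N$ and principal $(\eta) \not\subset N$, and this can fail — Example~\ref{E: count without gaps} in this paper exhibits (countably generated) ideals with no lower gaps whatsoever, and such an $M$ can easily be chosen incomparable with a principal ideal $(\xi)$, i.e., squarely inside your case $M \not\subset (\xi)$. Your construction of $N_0 = M' + (\xi)$ requires precisely a decomposition $M = M' + (\eta)$ with $M \cap (\xi) \subset M'$ and $(\eta) \not\subset M'$, which by Lemma~\ref{L: gaps}(ii) is equivalent to $M$ having a lower gap; so for such $M$ the argument cannot start. (Your sequence-splitting verification that $(\eta) \not\subset N_0$ would be fine, but it never gets off the ground.)

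In the remaining case, the step you defer — when $M$ is itself a coatom of $(\xi)$, produce a second coatom — is exactly the technical heart of the corollary, and ``sparse subsequences with distinct thinning patterns'' is not yet a proof. Note also that merely producing incomparable proper principal subideals of $(\xi)$ is insufficient: if both happen to lie inside $M$, the maximal ideals above them not containing $(\xi)$ could both turn out to be $M$ itself. The paper's proof, which needs no case distinction on whether $M \subset (\xi)$, constructs an explicit decomposition $(\xi) = (\rho) + (\eta)$ into two \emph{incomparable} principal ideals (by flattening $\xi$ on alternating blocks so that $\max(\rho,\eta) = \xi$ while $\xi \neq O(D_k\rho)$ and $\xi \neq O(D_k\eta)$ along suitable indices), then takes $M_1$ (resp.\ $M_2$) maximal among ideals contained in $M + (\xi)$ that contain $(\rho)$ (resp.\ $(\eta)$) but not $(\xi)$; each $M_i \subsetneq M + (\xi)$ is a gap, and if both $M_i \subset M$ then $M \supset (\rho) + (\eta) = (\xi)$, a contradiction. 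It is the identity $(\rho) + (\eta) = (\xi)$ that forces one of the two gaps to escape $M$, and that is precisely the construction your proposal leaves out.
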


\begin{proof}
Decompose the ideal $(\xi)$ into the sum of two non-comparable
principal ideals \linebreak
$(\xi) = (\rho) + (\eta)$, i.e. $(\eta)\not\subset(\rho)$ and
$(\rho)\not\subset(\eta)$,
  which implies that both are properly contained in $(\xi)$.
For this decomposition, inductively choose a sequence of indices
$n_1=1$ and $n_k \leq kn_k < n_{k+1}$
where $n_{k+1}$ is the smallest integer $j$ for which $\xi_j <
\frac{1}{k} \xi_{kn_k}$. Then
  define $\zeta_i := \frac{1}{k} \xi_{kn_k}$ for  $i\in [n_k,n_{k+1})$,
and $\eta_i=\zeta_i$ and $\rho_i = \xi_i$ for  $i\in [n_k,n_{k+1})$
for $k$ odd,
$\rho_i=\zeta_i$ and $\eta_i = \xi_i$ for  $i\in [n_k,n_{k+1})$ for $k$ even.
Then $\rho,\eta \leq \xi$, $\rho,\eta \in \co*$,
  and $(\frac{\xi}{D_k\rho})_{kn_k} =k$ for $k$ odd and
$(\frac{\xi}{D_k\eta})_{kn_k} =k$ for $k$ even,
from which it follows that $(\rho)\subsetneq (\xi)$,
  $(\eta)\subsetneq (\xi)$, and $(\eta)\not\subset(\rho)$,
$(\rho)\not\subset(\eta)$.
Moreover, $(\xi) = (\rho) + (\eta)$ since $\max\,(\rho,\eta) = \xi
\leq \rho + \eta \leq 2\xi$.

Now let $M_1$ (resp., $M_2$) be maximal among those ideals contained
in $M+(\xi)$
  that contain $(\rho)$ but do not contain $(\xi)$ (resp., that
contain $(\eta)$ but do not contain $(\xi)$).
Then each $M_i \subset M+(\xi)$ forms a gap by the maximality of the ideal $M_i$.
But if $M\supset M_1$ and $M\supset M_2$,
then $M \supset (\rho)$ and $M \supset (\eta)$ hence
  $M \supset (\eta) + (\rho) = (\xi)$ against the assumption, so at
least one $N:=M_i  \not \subset M$.
In particular, if $M\subsetneq J$ is a gap in $\mathscr {L}$, then
  $J=M+(\xi)$ for every principal ideal $(\xi)\not\subset M$ and for
any choice of such a principal ideal,
the pair $N \subsetneq J$ constructed above provides a lower gap for $J$ distinct from
$M \subsetneq J$.
\end{proof}

\noindent An immediate consequence of Lemma \ref{L: gaps} by choosing
$N=0$ in (ii) is:

\begin{corollary}\label{C: gaps princ}
Every principal ideal has lower gaps in $\mathscr {L}$.
\end{corollary}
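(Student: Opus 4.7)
The plan is simply to invoke Lemma \ref{L: gaps}(ii) with $N = \{0\}$. For any nonzero principal ideal $J = (\xi) \in \mathscr{PL}$, the trivial decomposition $J = \{0\} + (\xi)$ satisfies the hypothesis of the lemma, since $(\xi) \not\subset \{0\}$ (as $\xi$ is not the zero sequence, given the standing assumption that ideals in $\mathscr{L}$ are nonzero). The lemma therefore yields a lower gap for $(\xi)$, which is exactly the conclusion.

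There is no real obstacle, since the content has already been absorbed into the proof of Lemma \ref{L: gaps}(ii): the Hausdorff Maximality Principle applied to the collection $\mathscr{C}$ of ideals $L$ with $N \subset L \subsetneq J$ and $(\xi) \not\subset L$ produces a maximal element $M$, and $M \subsetneq (\xi)$ is the desired gap. The only point worth flagging explicitly is the convention, stated just before Lemma \ref{L: gaps}, that we allow $\{0\} \subsetneq I$ to count as a gap even though $\{0\}$ is not itself an element of $\mathscr{L}$; this is what lets the corollary apply to principal ideals $(\xi)$ for which the maximal $M$ produced by the lemma happens to be the zero ideal (for instance, $F$ itself, whose only lower gap is $\{0\} \subsetneq F$).
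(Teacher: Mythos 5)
Your proof is correct and is exactly the paper's argument: the paper, too, obtains this corollary by applying Lemma \ref{L: gaps}(ii) with $N=\{0\}$, relying on the stated convention that $\{0\}\subsetneq I$ counts as a gap. Nothing further is needed.
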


\noindent Countably generated ideals may fail to have lower gaps, as
the following example shows.

\begin{example}\label{E: count without gaps}
Let $I$ be an ideal with generators $\eta^{(k)}$, where the sequences
$\eta^{(k)}$
  all satisfy the $\Delta_{1/2}$-condition and $\eta^{(k)} =
o(\eta^{(k+1)})$. Then $I$ has no lower gaps
in $\mathscr {L}$.
\begin{proof}
Assuming otherwise, by Lemma \ref{L: gaps}(ii), $I=N+(\xi)$ for some
principal ideal $(\xi)\not\subset N$.  Then $(\xi) \subset
(\eta^{(k_o)})$ for some $k_o$ and hence $ \xi = O( \eta^{(k_o)})$.
Without loss of generality, assume $\xi \leq \eta^{(k_o)} \leq
\eta^{(k_o+1)}$. Since
\[\eta^{(k_o+1)} \leq \zeta+KD_m\xi \leq  \zeta+KD_m\eta^{(k_o)} \leq
\zeta +K'\eta^{(k_o)}\] for some
$ \zeta \in \Sigma(N), K, K' > 0$, and $m\in \mathbb {N}$, and since
$\eta^{(k_o)} =o(\eta^{(k_o+1)})$, it follows
that $\eta^{(k_o+1)} \leq K'' \zeta$ for some $K''>0$. Hence $(\xi) \subset N$,
contradicting the assumption $(\xi)\not\subset N$.
\end{proof}
\end{example}

\noindent The fact that countably generated ideals and, in particular,
principal ideals never have upper gaps
  will follow from the more general fact that \textit{strongly soft
complemented} (ssc for short) ideals
never have upper gaps, which we prove in Proposition \ref{P: ssc} below.
An ideal $I$ is defined in \cite[Definition 4.4]{vKgW04-Soft} to be
ssc if for every sequence
$\eta^{(k)}\not\in \Sigma(I)$ there is a sequence of
integers $n_k$ for which if $\xi \in \co*$ and
$\xi_i \geq  \eta^{(k)}_i$ for all $1\leq i\leq n_k$, then $\xi
\not\in  \Sigma(I)$. Without loss of generality, $n_k$ is  increasing.
In \cite[Propositions 4.5, 4.6, 4.7, 4.11, 4.12, and 4.18]{vKgW04-Soft}
we show that many ``classical" ideals are ssc, among them countably generated
ideals, maximal symmetric norm ideals, Lorentz,
  Marcinkiewicz, and Orlicz ideals.

\begin{proposition}\label{P: ssc}
Let $I = \bigcap I_{\gamma} $ be the intersection of strongly soft-complemented
ideals $I_{\gamma}$.
\item[(i)] If $I \subsetneq I +(\xi) $, then there is a principal ideal
$(\eta)\subset (\xi)$ such that $I \subsetneq I + (\eta)\subsetneq I +(\xi)$.
\item[(ii)] I has no upper gaps in $\mathscr {L}$.
\end{proposition}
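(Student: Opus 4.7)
The plan is to dispatch (ii) immediately from (i) via Lemma \ref{L: gaps}(i): if $I$ had an upper gap $I \subsetneq J$, then $J = I + (\xi)$ for some principal ideal $(\xi)$, and (i) would interpolate $I \subsetneq I + (\eta) \subsetneq J$, contradicting that the pair is a gap. So the work lies in (i).

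For (i), since $I \subsetneq I + (\xi)$, the sequence $\xi \notin \Sigma(I) = \bigcap_\gamma \Sigma(I_\gamma)$, so I fix $\gamma_0$ with $\xi \notin \Sigma(I_{\gamma_0})$. Applying the ssc property of $I_{\gamma_0}$ to the singleton family $\{\xi\}$ produces an integer $N$ such that any $\alpha \in c_o^*$ with $\alpha_i \ge \xi_i$ for $i \le N$ satisfies $\alpha \notin \Sigma(I_{\gamma_0})$. Now I build $\eta$ piecewise: $\eta_i := \xi_i$ for $i \le N$ and $\eta_i := \xi_{f(i)}$ for $i > N$, where $f \colon \mathbb{N} \to \mathbb{N}$ is a strictly increasing function chosen (diagonally) so that for every fixed $m \ge 1$, $\xi_{f(j)}/\xi_{mj} \to 0$ as $j \to \infty$. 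Such an $f$ exists because $\xi_n > 0$ for all $n$ (as $\xi \notin \Sigma(F) \subset \Sigma(I_{\gamma_0})$) and $\xi_n \to 0$: for each $j$ and each $m \le j$, pick $n_{j,m}$ with $\xi_{n_{j,m}} < \xi_{mj}/(jm)$, and set $f(j) := \max\bigl(f(j-1)+1,\ \max_{m \le j} n_{j,m}\bigr)$. A routine check shows $\eta \in c_o^*$, $\eta \le \xi$ (so $(\eta) \subset (\xi)$), and $\eta \ge \xi$ on $[1,N]$; by ssc, $\eta \notin \Sigma(I_{\gamma_0}) \supset \Sigma(I)$, giving $I \subsetneq I + (\eta)$.

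The heart of the argument is $I + (\eta) \subsetneq I + (\xi)$, i.e., $\xi \notin \Sigma(I + (\eta))$. Suppose for contradiction $\xi \le \zeta + C D_m \eta$ pointwise for some $\zeta \in \Sigma(I) \subset \Sigma(I_{\gamma_0})$, $m \ge 1$, $C > 0$. For $i > mN$ and $j := \lceil i/m \rceil > N$, one has $(D_m \eta)_i = \eta_j = \xi_{f(j)}$, and by the choice of $f$, for $j$ past a threshold $j_0 = j_0(m, C)$, $C \xi_{f(j)} \le \xi_{mj}/2 \le \xi_i/2$, forcing $\zeta_i \ge \xi_i/2$ for all $i > m j_0$. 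Now perturb $\zeta$ by the finite-rank sequence $G$ defined by $G_i = \xi_1/2$ on $[1, mj_0]$ and $G_i = 0$ elsewhere; since $F \subset I_{\gamma_0}$, $G \in \Sigma(I_{\gamma_0})$, so $\zeta' := \zeta + G \in \Sigma(I_{\gamma_0})$ and $\zeta' \ge \xi/2$ everywhere. By heredity, $\xi/2 \in \Sigma(I_{\gamma_0})$, hence $\xi \in \Sigma(I_{\gamma_0})$, contradicting the choice of $\gamma_0$.

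The main obstacle is the diagonal construction of $f$: it must grow fast enough that the dilation $D_m \eta$ is uniformly negligible against $\xi$ for every $m$ simultaneously, which is what converts any supposed expression $\xi \le \zeta + C D_m \eta$ into a statement forcing $\zeta$ to dominate $\xi/2$ on a cofinite tail. The ssc witness $N$ plays the complementary role of anchoring $\eta = \xi$ on a large enough initial segment to prevent $\eta$ from slipping into $\Sigma(I_{\gamma_0})$.
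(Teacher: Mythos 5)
Your reduction of (ii) to (i) via Lemma \ref{L: gaps}(i) is fine, and the second half of your argument (using $D_m\eta=o(\xi)$ to force $\zeta_i\ge \xi_i/2$ eventually and conclude $\xi\in\Sigma(I_{\gamma_0})$) is sound in spirit, but the opening step misreads the ssc definition and the construction collapses there. The ssc property does \emph{not} give, for a single excluded sequence $\xi\notin\Sigma(I_{\gamma_0})$, an integer $N$ such that every $\alpha\in c_o^*$ with $\alpha_i\ge\xi_i$ for $i\le N$ lies outside $\Sigma(I_{\gamma_0})$. No nonzero ideal can have that property: the finitely supported sequence $\alpha$ with $\alpha_i=\xi_i$ for $i\le N$ and $\alpha_i=0$ for $i>N$ dominates $\xi$ on $[1,N]$ yet lies in $\Sigma(F)\subset\Sigma(I_{\gamma_0})$. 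In the definition the witnesses $n_k$ are attached to a countable family $\eta^{(k)}\notin\Sigma(I)$, and the conclusion applies to a sequence dominating \emph{every} $\eta^{(k)}$ on its segment $[1,n_k]$ simultaneously; applied to the singleton family $\{\xi\}$ it yields nothing useful. Because membership in an ideal is a tail property, your $\eta$ (equal to $\xi$ on a fixed finite segment, then the thin tail $\xi_{f(i)}$) is in general \emph{inside} $\Sigma(I)$, so $I+(\eta)=I$ and the first strict inclusion fails. Concretely, take $I=I_{\gamma_0}=(\omega)$ and $\xi_n=\frac{\log n}{n}$: your requirement with $m=1$ forces $\xi_{f(j)}<\xi_j/j\le 1/j$ for $j>N$, hence $\eta=O(\omega)$ and $\eta\in\Sigma(I)$.

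This is exactly the difficulty the paper's construction is designed to overcome: there $\eta$ is defined blockwise as $\eta_i=\frac1k(D_{1/k}\xi)_i$ on $(n_{k-1},n_k]$, where the $n_k$ are the ssc witnesses for the countable family $\frac1k D_{1/k}\xi\notin\Sigma(I)$. Then $\eta$ dominates $\frac1k D_{1/k}\xi$ on the \emph{whole} segment $[1,n_k]$ for every $k$ (using monotonicity of the family in $k$), and it is precisely this simultaneous domination on arbitrarily long initial segments — not domination of $\xi$ on one fixed segment — that the ssc property converts into $\eta\notin\Sigma(I)$. The scaling factors $\frac1k$ and the dilations $D_{1/k}$ then give $D_m\eta=o(\xi)$ for every $m$, which yields $(\eta)\subset(\xi)$ and $(\xi)\not\subset I+(\eta)$ by essentially the tail argument you gave. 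So to repair your proof you must replace the single witness $N$ by a family of witnesses for a sequence of excluded test sequences whose sizes shrink relative to $\xi$, and make $\eta$ stay above those test sequences on all the segments $[1,n_k]$; an $\eta$ that agrees with $\xi$ only on a finite initial segment can never work.
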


\begin{proof}
\item[(i)] First we prove the statement when $I$ itself is strongly
soft-complemented.
Since \linebreak
$\frac{1}{k}D_{\frac{1}{k}}\xi \not\in \Sigma(I)$
for all $k$
there is an increasing sequence of integers $n_k$ such that if $ \zeta \in \co*$ and
$\zeta_i \geq \frac{1}{k}D_{\frac{1}{k}}\xi$ for $ 1\leq i\leq n_k$,
 then  $ \zeta \not\in \Sigma(I)$. Define
$\eta_i = \frac{1}{k}(D_{\frac{1}{k}}\xi)_i$ for
$i \in (n_{k-1}, n_k] $; then $\eta \in \co*$ and  $\eta \leq \xi$. Since
$\frac{1}{j}D_{\frac{1}{j}}\xi\ge\frac{1}{k}D_{\frac{1}{k}}\xi$ for
every $j\le k$,
$\eta_i \ge \frac{1}{k}(D_{\frac{1}{k}}\xi)_i$ for all $i \in [1,
n_k] $ and hence by the strong soft-complementedness of
$I$, $\eta \not\in \Sigma(I)$. Thus $I \subsetneq I+(\eta)$.
For every $m\in \mathbb N$ and
for every $i > mn_m$ choose that $k$ for which
$n_{k-1} < \lceil \frac {i}{m} \rceil \leq n_k$.
It is easy to verify that $k\geq m+1$, hence
${k\lceil \frac {i}{m} \rceil} \geq i$, and thus $(D_m\eta)_i =
\eta_{ \lceil \frac {i}{m} \rceil}
=\frac{1}{k}\xi_{k \lceil \frac {i}{m} \rceil}
\le \frac{1}{k}\xi_i$, i.e., $D_m\eta = o(\xi)$ and hence $(\eta) \subset (\xi)$.
This  implies that
$(\xi) \not\subset I+(\eta)$ and hence that $I + (\eta)\subsetneq I
+(\xi) $. Indeed otherwise $\xi\leq \zeta+KD_m\eta$
for some $\zeta \in \Sigma(I)$,  $m\in \mathbb N$, and $K > 0$. Since
$D_m\eta = o(\xi)$, eventually
$\xi_i \leq 2\zeta_i$, that is $(\xi) \subset I$, against the hypothesis.

Now we prove the general case. Since $(\xi) \not\subset I$, then
$(\xi) \not\subset I_{\gamma}$ for some $I_{\gamma}$ in the collection.
Then by the first part of the proof there is a principal ideal $
(\eta)\subset (\xi)$ such that
$I_{\gamma} \subsetneq I_{\gamma} + (\eta)\subsetneq I_{\gamma}
+(\xi) $. In particular,
$ (\eta) \not\subset I_{\gamma}$ hence $ (\eta) \not\subset I$ and similarly,
$ (\xi) \not\subset I_{\gamma}+(\eta)$, hence
$ (\xi) \not\subset I+(\eta)$. Thus  $I \subsetneq I +
(\eta)\subsetneq I +(\xi) $.
\item[(ii)] Immediate from Lemma \ref{L: gaps}.
\end{proof}

\noindent In particular, Proposition \ref{P: ssc} applies to all ssc ideals (e.g., all countably generated ideals).\\

All the gaps proved here depend on the Hausdorff Maximality Principle
or its equivalent, Zorn's Lemma.
We do not know if the existence of gaps in $\mathscr L$ is equivalent to these. \\

Next we discuss density of the lattices of principal ideals and countably generated ideals.

\begin{corollary}\label{C: count gen upper density}
\item[(i)]  $\mathscr {L}_{\aleph_o}$ is upper dense in $\mathscr {L}$.
\item[(ii)] $\mathscr {PL}$ is upper dense in $\mathscr {L}$.
In particular, $\mathscr {L}_{\aleph_o}$  and $\mathscr {PL}$ have no gaps.
\end{corollary}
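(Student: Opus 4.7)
The plan is to deduce both parts directly from Proposition \ref{P: ssc}(i), using the key fact established in \cite{vKgW04-Soft} that every countably generated ideal (in particular, every principal ideal) is strongly soft-complemented.

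For part (ii): Let $I=(\xi_0)\in\mathscr{PL}$ and $J\in\mathscr L$ with $I\subsetneq J$. I would first pick any $\xi\in\Sigma(J)\setminus\Sigma(I)$, so that $I\subsetneq I+(\xi)\subseteq J$. Since $I$ is principal and therefore ssc, Proposition \ref{P: ssc}(i) applies to $I\subsetneq I+(\xi)$ and yields a principal ideal $(\eta)\subset(\xi)$ with
\[
I\subsetneq I+(\eta)\subsetneq I+(\xi)\subseteq J.
\]
Now $I+(\eta)=(\xi_0)+(\eta)=(\xi_0+\eta)$ is principal, so setting $L:=I+(\eta)$ gives $L\in\mathscr{PL}$ with $I\subsetneq L\subsetneq J$, as required.

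For part (i): The argument is identical, replacing ``principal'' by ``countably generated'' in the setup. If $I\in\mathscr{L}_{\aleph_o}$, then $I$ is still ssc, Proposition \ref{P: ssc}(i) delivers a principal ideal $(\eta)\subset(\xi)$ with $I\subsetneq I+(\eta)\subsetneq I+(\xi)\subseteq J$, and the sum of a countably generated ideal and a principal ideal is countably generated, so $L:=I+(\eta)\in\mathscr{L}_{\aleph_o}$.

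For the final ``in particular'' statement: a gap in either sublattice would be a nested pair $I\subsetneq J$ with $I,J$ in the sublattice and no intermediate ideal from the sublattice. Since $J\in\mathscr L$ as well, parts (i) and (ii) produce such an intermediate ideal, contradicting the existence of a gap. Thus there is essentially no main obstacle here: the work is entirely contained in Proposition \ref{P: ssc}, and what this corollary contributes is just the observation that the intermediate ideal produced there inherits the finite (or countable) generation from $I$ together with the single new generator $\eta$.
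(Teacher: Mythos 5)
Your proposal is correct and follows essentially the same route as the paper: pick $\xi\in\Sigma(J)\setminus\Sigma(I)$, invoke the strong soft-complementedness of countably generated (hence principal) ideals together with Proposition \ref{P: ssc}(i) to get $I\subsetneq I+(\eta)\subsetneq I+(\xi)\subset J$, and note that $I+(\eta)$ stays in the relevant sublattice. The derivation of the ``no gaps'' clause from upper density is also exactly the paper's argument.
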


\begin{proof}
\item[(i)] If $I \in \mathscr {L}_{\aleph_o}$, $J \in \mathscr {L}$,
and $I \subsetneq J$, then there is a
$\xi \in \Sigma(J)\setminus \Sigma(I) $ and hence $I\subsetneq I+(\xi)$.
By \cite [Proposition 4.5] {vKgW04-Soft}, $I$ is ssc, so by Proposition \ref {P: ssc}(i), there is a principal ideal
$(\eta)$ such that $I \subsetneq I + (\eta)\subsetneq I +(\xi) \subset J$.
By adding $\eta$ to each of a countable collection of generators for
$I$, one easily sees that $I + (\eta) \in \mathscr {L}_{\aleph_o}$.
\item[(ii)] Use the same argument as in (i) and the fact that if $I
\in \mathscr {PL}$, then also $I + (\eta) \in \mathscr {PL}$.
\item[(iii)] Apply Proposition \ref{P: ssc}(i) for $I$ a single principal ideal.
\end{proof}

\noindent Corollary \ref {C: count gen upper density} implies:
\begin{corollary}\label{C: count gen and inters}
Let $I\in \mathscr L_{\aleph_o}$ and $J \in \mathscr {PL}$.
\item[(i)]
$\bigcap \{L \in \mathscr L \mid L \supsetneq I\}  = \bigcap \{L \in
\mathscr L_{\aleph_o} \mid L \supsetneq I\}  = I$
\item[(ii)]
$\bigcap \{L \in \mathscr L \mid L \supsetneq J\}  = \bigcap \{L \in
\mathscr {PL} \mid L \supsetneq J\}  = J$
\end{corollary}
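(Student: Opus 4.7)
The plan is to derive both equalities directly from Corollary \ref{C: count gen upper density} by a contradiction argument. The trivial inclusions
\[
I \;\subset\; \bigcap\{L \in \mathscr L \mid L \supsetneq I\} \;\subset\; \bigcap\{L \in \mathscr L_{\aleph_o} \mid L \supsetneq I\}
\]
(and the analogous chain with $J$ and $\mathscr{PL}$) reduce each part to the single nontrivial inclusion: the intersection over the \emph{smaller} sublattice is contained in $I$ (resp.\ $J$). Note the direction --- restricting to a smaller indexing collection enlarges the intersection --- so it is precisely the sublattice side that must be shown to collapse to $I$.

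For (i), suppose $\xi \in \Sigma(L)$ for every $L \in \mathscr L_{\aleph_o}$ with $L \supsetneq I$, yet $\xi \notin \Sigma(I)$. Then $I \subsetneq I + (\xi)$, and since $I \in \mathscr L_{\aleph_o}$, Corollary \ref{C: count gen upper density}(i) produces a countably generated ideal $L_0$ with $I \subsetneq L_0 \subsetneq I + (\xi)$. By the hypothesis on $\xi$ we have $(\xi) \subset L_0$, hence $I + (\xi) \subset L_0$, contradicting $L_0 \subsetneq I + (\xi)$. Part (ii) is established by the identical argument, invoking Corollary \ref{C: count gen upper density}(ii) to produce an $L_0 \in \mathscr{PL}$ strictly between $J$ and $J + (\xi)$.

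There is no genuine obstacle: Corollary \ref{C: count gen and inters} is essentially a dual formulation of upper density, asserting that enough sublattice ideals sit immediately above $I$ (resp.\ $J$) to detect every $\xi$ missing from $\Sigma(I)$ (resp.\ $\Sigma(J)$). The proof is therefore brief, once one observes that the intermediate ideal $L_0$ supplied by upper density cannot simultaneously contain $(\xi)$ and be properly smaller than $I+(\xi)$.
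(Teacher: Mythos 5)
Your proof is correct and is exactly the argument the paper intends: the corollary is stated as an immediate consequence of Corollary \ref{C: count gen upper density}, and your contradiction argument (an intermediate ideal $L_0$ from upper density cannot contain $(\xi)$ while lying strictly below $I+(\xi)$) is the standard way to spell that out. No gaps.
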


Gaps in other lattices can be studied by similar tools as the following two simple examples illustrate.

If $I \subsetneq J$ are am-closed ideals,
then  $I \subsetneq I+(\xi) \subset J$ for some principal ideal
$(\xi)\subset J$. But then $(\xi)^-\subset J$, hence
  $I \subsetneq I+(\xi)^- \subset J$.
By \cite [Theorem 2.5]{vKgW04-Soft}, $I+(\xi)^-$ is am-closed and by
  \cite[Lemma 2.1(v)]{vKgW04-Soft}, the union M of a maximal chain of
am-closed ideals $I\subset L \subset I+(\xi)^-$ with $(\xi)
\not\subset L$ is also am-closed.
If  $ M \subset L= L^-\subset I+(\xi)^-$ and
$L\neq I+(\xi)^-$, i.e., $(\xi)^- \not\subset L$, then $(\xi)
\not\subset L$ since $L$ is am-closed. By the
maximality of the chain, $I\subset M \subsetneq I+(\xi)^-$ is a
gap in the lattice of am-closed ideals.
  Similar arguments show that  pairs of am-open ideals, am-$\infty$
closed ideals and
am-$\infty$ open ideals all contain gaps in their respective  ideal lattices.

If $(\xi)\in  \mathscr {SPL}$ and $M\subset (\xi)$ is a gap in
$\mathscr L$, then $M$ must be am-stable. Indeed, if not
$M\subsetneq M_a\subset (\xi)_a=(\xi)$ would imply $M_a=(\xi)$, hence
$\xi = O(\mu_a)$ for some $\mu\in\Sigma(M)$.
Since also $\mu_a = O(\xi)$, $\mu_a \asymp \xi$, hence $\mu_a$ is regular.
But then $\mu$ itself is regular by \cite[Theorem 3.10]{DFWW} and thus
$\mu \asymp \xi$, against the assumption that
  $M\neq (\xi)$.\\

Together with the upper density of $\mathscr {PL}$ in $\mathscr {L}$,
the following three
theorems which are the technical core of this article, will provide
the tools we need for
applications to arithmetic mean ideals. Recall that a sequence $\xi
\in \co*$ satisfies the $\Delta_{1/2}$-condition if
$\xi_n \leq M\xi_{2n}$ for some $M > 1$ and all $n$ and that
$\Delta_{1/2} \mathscr {PL}$
  is the lattice of principal ideals with $\Delta_{1/2}$ generators.

\begin{theorem}\label{T: Delta1/2-PL strong density and strong gaps}
$\Delta_{1/2} \mathscr {PL}$ is upper and lower dense in $\mathscr {PL}$.
\end{theorem}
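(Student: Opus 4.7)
The plan is to handle upper and lower density separately, each via an explicit block-wise interpolation producing an intermediate $\Delta_{1/2}$-generator. The common reduction is that, since $\xi \in \Delta_{1/2}$ (so $D_m\xi \asymp \xi$ for every $m$), containment and strict containment between principal ideals translate into pointwise $O$- and non-$O$-statements; the same will hold for the constructed $\zeta$ once it is shown to be $\Delta_{1/2}$.

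For upper density, suppose $(\xi) \in \Delta_{1/2}\mathscr{PL}$ and $(\eta) \in \mathscr{PL}$ with $(\xi) \subsetneq (\eta)$. The hypothesis unfolds as $\xi = O(\eta)$ and $\eta/\xi$ unbounded. I would extract a sparse sequence $n_k \uparrow \infty$ along which $\eta_{n_k}/\xi_{n_k} \to \infty$ and anchor the geometric mean $\zeta_{n_k} := \sqrt{\xi_{n_k}\,\eta_{n_k}}$, so that both $\zeta_{n_k}/\xi_{n_k}$ and $\eta_{n_k}/\zeta_{n_k}$ diverge to infinity. Between consecutive anchors, interpolate $\zeta$ by a monotone power law $\zeta_n := \zeta_{n_k}(n_k/n)^{\alpha_k}$, with $\alpha_k$ determined by matching $\zeta_{n_{k+1}}$ at the right endpoint. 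The sparsity $n_{k+1}/n_k$ is fixed inductively to bound $\alpha_k$ uniformly: the contribution of $\xi$'s decay is controlled by the $\Delta_{1/2}$-constant of $\xi$, while the contribution of $\eta$'s decay is tamed because $\eta_n \to 0$ lets us make $\log(n_{k+1}/n_k)$ dominate $\log(\eta_{n_k}/\eta_{n_{k+1}})$. The resulting $\zeta$ is piecewise a pure power with uniformly bounded exponent, hence $\Delta_{1/2}$; the inclusions $(\xi) \subsetneq (\zeta) \subsetneq (\eta)$ then follow from $\xi \leq \zeta \leq C\eta$ (verified block by block using the anchor values and the $\Delta_{1/2}$-control of $\xi$) together with the divergent anchor ratios, which, via $\zeta \in \Delta_{1/2}$, promote pointwise divergence at the $n_k$ to the required $\zeta \ne O(\xi)$ and $\eta \ne O(D_m\zeta)$ for every $m$.

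Lower density is handled by a parallel construction with a twist. Given $(\eta) \subsetneq (\xi) \in \Delta_{1/2}\mathscr{PL}$, the hypothesis becomes $\eta = O(\xi)$ together with $\xi_n/(D_m\eta)_n$ unbounded for every $m$. I would pick an escalating sequence $m_k \uparrow \infty$ and witnesses $n_k$ with $\xi_{n_k}/(D_{m_k}\eta)_{n_k} \to \infty$, anchor $\zeta_{n_k} := \sqrt{\xi_{n_k}(D_{m_k}\eta)_{n_k}}$, and run the same power-law interpolation. The use of escalating $m_k$ is essential for defeating all dilations simultaneously: once $\zeta_{n_k}/(D_{m_k}\eta)_{n_k} \to \infty$, then for any fixed $m$, as soon as $m \leq m_k$ we have $(D_m\eta)_{n_k} \leq (D_{m_k}\eta)_{n_k}$, giving $\zeta_{n_k}/(D_m\eta)_{n_k} \to \infty$ and hence $\zeta \ne O(D_m\eta)$. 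If the power-law interpolation threatens to drop $\zeta$ below $\eta$ in the interior of a block, one can replace $\zeta$ there by $\max(\zeta, \eta)$ without disturbing the $\Delta_{1/2}$-property or the other required inequalities.

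The main obstacle in both directions is the interlocked choice of parameters: the sparsity $n_{k+1}/n_k$ must be large enough to keep $\sup_k\alpha_k < \infty$; the anchor ratios must diverge; and the block-wise inequalities $\xi \leq \zeta \leq C\eta$ (upper case) or $\eta \leq \zeta \leq C\xi$ (lower case) must survive the interpolation. Threading these constraints together — exploiting $\xi \in \Delta_{1/2}$ for the growth control, $\eta_n \to 0$ for arbitrary log-separation, and in the lower case the unboundedness of $\xi/D_m\eta$ for every $m$ — is the technical heart of the argument; once they are arranged, $\zeta$ lies strictly between $(\xi)$ and $(\eta)$ in $\Delta_{1/2}\mathscr{PL}$.
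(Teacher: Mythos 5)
Your reduction of the ideal inclusions to $O$-statements is fine, and the general plan (an explicit blockwise construction of an intermediate $\Delta_{1/2}$ generator) is the same as the paper's, but the specific template --- geometric-mean anchors $\zeta_{n_k}=\sqrt{\xi_{n_k}\eta_{n_k}}$ joined by power laws with uniformly bounded exponents --- does not deliver the containments you still owe inside each block, and in the upper-density case it fails outright. Take $\xi=\omega$ and let $\eta$ agree with $\omega$ except on sparse blocks $(a_j,b_j]$ where $\eta\equiv\omega_{a_j}$, with $b_j/a_j\to\infty$; then $(\xi)\subsetneq(\eta)$, the ratio $\eta/\xi$ is large only near the right ends of the plateaus, and $\eta$ collapses back to $\xi$ immediately afterwards. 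With the canonical extraction $n_k=b_{j_k}$, for any fixed $m$ and $n$ just above $mn_k$ one has $(D_m\eta)_n=\eta_{\lceil n/m\rceil}\le\xi_{n_k}$, while $\zeta_n\ge\zeta_{n_k}(2m)^{-\alpha}$ with $\alpha=\sup_k\alpha_k<\infty$, so $\zeta_n/(D_m\eta)_n\gtrsim (2m)^{-\alpha}\sqrt{\eta_{n_k}/\xi_{n_k}}\to\infty$; hence $\zeta\ne O(D_m\eta)$ for every $m$ and $(\zeta)\not\subset(\eta)$. So your claim that $\zeta\le C\eta$ is ``verified block by block using the anchor values and the $\Delta_{1/2}$-control of $\xi$'' is not merely unproved, it is false: inside a block $\eta$ is bounded below by nothing except $\xi$, whereas your $\zeta$ is forced to exceed $\xi$ by an unbounded factor on a long stretch to the \emph{right} of each anchor. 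The paper's construction is shaped to avoid exactly this: its $\zeta$ exceeds $\xi$ only on an interval to the \emph{left} of the witness index $2^{n_k}$, where monotonicity of $\eta$ plus the strengthened witness inequality $\eta_{2^{n_k}}\ge kM^k\xi_{2^{n_k}}$ gives $\zeta_i\le k\xi_{2^{n_k-k}}\le kM^k\xi_{2^{n_k}}\le\eta_{2^{n_k}}\le\eta_i$, and $\zeta$ rejoins $\xi$ at the witness point and to its right.

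The lower-density half has the mirror-image problem, and your patch does not repair it. The matched power law can indeed dip below $\eta$ (and also rise above a constant multiple of $\xi$) in the interior of a block, and replacing $\zeta$ by $\max(\zeta,\eta)$ does \emph{not} preserve the $\Delta_{1/2}$-condition, because $\eta$ need not satisfy it: if $\eta$ is nearly flat on the patched stretch and drops sharply at its right edge, the $\Delta_{1/2}$ test there forces $\eta_n\lesssim\zeta_{2n}\lesssim\zeta_n$, which is exactly what fails on the patch (the same objection rules out capping by $\min(\zeta,\eta)$ in the upper case). Taking $\min(\zeta,\xi)$ to restore $\zeta=O(\xi)$ would be legitimate, since the minimum of two $\Delta_{1/2}$ sequences is $\Delta_{1/2}$, but there is no $\Delta_{1/2}$ majorant of $\eta$ available other than $\xi$ itself. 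What is needed is to build the margin over $\eta$ into the witness choice, as the paper does: choosing $n_k$ with $\xi_{n_k}\ge kM^k\eta_{n_k}$ and letting $\zeta$ fall below $\xi$ only by slowly growing factors ($\tfrac1j$ on dyadic steps, capped at $\tfrac1k$) on an interval adjacent to $n_k$ guarantees $\zeta_i\ge\tfrac1{kM^k}\xi_{n_k}\ge\eta_{n_k}\ge\eta_i$ there, with $\zeta=\xi$ elsewhere. A smaller point: your argument that the exponents $\alpha_k$ can be kept bounded is circular as stated (enlarging $n_{k+1}$ also shrinks $\eta_{n_{k+1}}$); what actually makes such a bound available is the Potter-type polynomial lower bound $\xi_n\ge C\xi_1 n^{-p}$ coming from the $\Delta_{1/2}$-condition, which also controls $\eta\ge\xi$ in the upper case --- but this is secondary to the containment failures above.
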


\begin{proof}
First we prove lower density. Let $(\xi)$ be a principal ideal where
the generating sequence $\xi$ satisfies the
$\Delta_{1/2}$-condition and let $(\eta) \subsetneq (\xi)$. Then $\eta = O(\xi)$, so assume
without loss of generality that
$\eta \leq \xi$. Construct inductively two sequences of indices
$q_{k-1} < n_k < q_k $ as follows. Let
$M > 1$ be a bound for which $\xi_n \leq M\xi_{2n}$ for all $n$,
choose an integer $k_o \geq M$,
set $q_{k_o} = 1$, and assume the construction up to $k-1\geq k_o$.
Since $\xi \notin \Sigma((\eta))$
  and hence $\xi \ne O(\eta)$, choose $n_k > q_{k-1}$ so that
$\xi_{n_k} \geq kM^k\eta_{n_k}$
and let  $q_k$ be the largest integer $i$ for which $\xi_i \geq
\frac{1}{k}\xi_{2^kn_k}$. Since
$\xi_{2^{k+1}n_k}\geq \frac{1}{M}\xi_{2^kn_k} \geq
\frac{1}{k}\xi_{2^kn_k}$, it follows that $q_k\geq 2^{k+1}n_k$.
Now define the sequence
\[
\zeta_i :=
\begin{cases}
\xi_i &i \in (q_{k-1}, n_k] \\
\frac{1}{j}\xi_i & i\in (2^{j-1}n_k, 2^jn_k] \text{ and $1\leq j \leq k$} \\
\frac{1}{k}\xi_{2^kn_k}    &i\in (2^kn_k, q_k].
\end{cases}
\]
By construction, $\zeta$ is monotone nonincreasing and $\zeta \leq \xi$.
By assumption, $\eta_i \leq \xi_i = \zeta_i$ for  $i \in (q_{k-1},n_k]$ and
$\zeta_i \geq \frac{1}{k}\xi_{2^{k}n_k} \geq \frac{1}{kM^k}\xi_{n_k}
\geq \eta_{n_k} \geq \eta_i$
for $i \in (n_k,q_k]$.
Thus $\eta\leq \zeta$ and hence $(\eta)\subset (\zeta)\subset (\xi)$.
The following inequalities show that the sequence $\zeta$ satisfies
the $\Delta_{1/2}$-condition.

\[
\frac{\zeta_i}{\zeta_{2i}}=
\begin{cases}
  \frac{\xi_i}{\xi_{2i}}\leq M  & i\in (q_{k-1}, n_k]\\
\\
  \frac{\frac{1}{j}\xi_i}{\frac{1}{j+1}\xi_{2i}}\leq 2M &i\in (2^{j-1}n_k, 2^jn_k] \text{ and $1\leq j \leq k-1$}\\
\\
\frac{\frac{1}{k}\xi_i}{\frac{1}{k}\xi_{2^kn_k}} \leq \frac{\xi_{2^{k-1}n_k}}{\xi_{2^kn_k}}\leq M &i\in (2^{k-1}n_k, 2^kn_k]\\
\\
\frac{\frac{1}{k}\xi_{2^kn_k}}{\zeta_{2i}} \leq \frac{\xi_{q_k}}{\zeta_{2q_k}} = \frac{\xi_{q_k}}{\xi_{2q_k}} \leq  M &i\in (2^kn_k, q_k]
\end{cases}
\]
Thus $\frac{\zeta_i}{\zeta_{2i}}\leq 2M$ for all $i$. Finally $\zeta
\ne O(\eta)$ and $\xi \ne O(\zeta)$ since
$\left(\frac{\zeta}{\eta}\right)_{n_k} =
\left(\frac{\xi}{\eta}\right)_{n_k} \geq kM^k$
and $\left(\frac{\xi}{\zeta} \right)_{2^kn_k}=k$ for all $k$.
Since $\zeta$ satisfies the $\Delta_{1/2}$-condition, it follows that
$(\eta) \ne (\zeta) \ne (\xi)$,
which concludes the proof of lower density.

Next we prove upper density. Let $(\xi) \in \Delta_{1/2} \mathscr {PL}$, let
$(\xi) \subsetneq (\eta)$ and assume without loss of generality  that
$\xi \leq \eta$. Construct inductively two increasing sequences
  of integers $n_k,q_k$ with $2^{n_{k-1}} < q_k < 2^{n_k}$ as follows.
Let $M > 1$ be such that $\xi_n \leq M\xi_{2n}$
for all $n$, choose an integer $k_o \geq M$, set $n_{k_o}=0$, and
assume the construction up to $k-1 \geq k_o$.
Since $\eta \ne O(\xi)$, we can find infinitely many integers $r_k$
such that $\eta_{r_k} \geq kM^{k+1}\xi_{r_k}$. Let $n_k:=[\log_2{r_k}]$,
then
\[
\eta_{2^{n_k}} \geq \eta_{r_k}\geq kM^{k+1}\xi_{r_k}\geq
kM^k\xi_{\frac{r_k}{2}}\geq kM^k\xi_{2^{n_k}}.
\] Choose $r_k$ sufficiently large so that
$\xi_{2^{n_{k-1}+1}} \geq k\xi_{2^{n_k-k}}$ and let $q_k$ to be the
largest integer $i$ for which
$\xi_i \geq k\xi_{2^{n_k-k}}$. As $k\xi_{2^{n_k-k}} \geq
\frac{k}{M}\xi_{2^{n_k-k-1}} > \xi_{2^{n_k-k-1}}$,
because $\xi_i >0$ for all $i$ as $\xi$ satisfies the $\Delta_{1/2}$-condition,
it follows that $2^{n_{k-1}} < 2^{n_{k-1}+1} \leq q_k < 2^{n_k-k-1} < 2^{n_k}$.
  Now define the sequence
\[
\zeta_i :=
\begin{cases}
\xi_i            & i\in [2^{n_{k-1}},q_k] \\
k\xi_{2^{n_k-k}}    &i\in (q_k,2^{n_k-k}] \\
j\xi_i            &i\in (2^{n_k-j},2^{n_k-j+1}] \text { for $1 \leq j \leq k$.}
\end{cases}
\]
Then by construction, $\zeta \in \co*$ and $\xi \leq \zeta$.
Since $\zeta_i=\xi_i \leq \eta_i$ for $i\in [2^{n_{k-1}},q_k]$ and
\[\zeta_i \leq k\xi_{2^{n_k-k}} \leq kM^k\xi_{2^{n_k}}\leq
\eta_{2^{n_k}} \leq \eta_i ~\text{for}~i\in (q_k,2^{n_k}],\]
it follows that $\zeta \leq \eta$ and hence $(\xi)\subset
(\zeta)\subset (\eta)$.
The following inequalities show that $\zeta$ satisfies the
$\Delta_{1/2}$-condition.

\[
\frac{\zeta_i}{\zeta_{2i}}=
\begin{cases}
  \frac{\xi_i}{\zeta_{2i}}\leq\frac{\xi_i}{\xi_{2i}}\leq M  & i\in
[2^{n_{k-1}},q_k]\\
  \frac{k\xi_{2^{n_k-k}}}{\zeta_{2i}}\leq
\frac{\xi_{2^{n_k-k}}}{\xi_{2^{n_k-k+1}}}\leq M
&i\in (q_k,2^{n_k-k}]\\
\frac{j\xi_i}{(j-1)\xi_{2i}}\leq 2M
&i\in (2^{n_k-j},2^{n_k-j+1}] \text { for $2 \leq j \leq k$.}\\
  \frac{\xi_i}{\xi_{2i}} \leq  M
&i\in (2^{n_k-1},2^{n_k}]
\end{cases}
\]
For the last inequality, notice that $2^{n_k+1} \leq q_{k+1}$, which was proved above.
Finally, from $\eta_{2^{n_k}} \geq kM^k\xi_{2^{n_k}} =
kM^k\zeta_{2^{n_k}}$ and $\zeta_{2^{n_k-k}}
= k\xi_{2^{n_k-k}}$ it follows that $\eta \ne O(\zeta)$ and $\zeta
\ne O(\xi)$. Since both $\xi$ and $\zeta$ satisfy the $\Delta_{1/2}$-condition,   $(\xi) \neq (\zeta) \neq(\eta)$, which concludes the proof.
\end{proof}

\noindent As stated in (\cite[Section 2.4 (22)]{DFWW} and in
\cite[Corollary 4.15(i)]{vKgW02}, the
$\Delta_{1/2}$-condition is  equivalent to the Potter-type condition
  $\xi_n \geq C (\frac{m}{n})^p\xi_m$ for some $C > 0$ (necessarily $C
\leq 1$), $p \in \mathbb{N}$, and all
$n \geq m$. Although we did not need to employ explicitly this
condition in the above proof, similar
conditions characterizing regular and $\infty$-regular sequences will
be essential in the proofs of the next two theorems.

\begin{theorem}\label{T: SPL density in PL}
$\mathscr {SPL}$ is upper and lower dense in $\mathscr {PL}$.
\end{theorem}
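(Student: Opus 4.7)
The plan is to adapt the construction from Theorem \ref{T: Delta1/2-PL strong density and strong gaps} by replacing the factor-$1/j$ dips with power-law dips tailored to the Potter-type characterization of regularity proved in \cite[Theorem 3.10]{DFWW}: $\xi$ is regular if and only if there exist constants $C \geq 1$ and $p \in (0,1)$ with $\xi_m \leq C(n/m)^p \xi_n$ for all $n \geq m$. Fix such Potter constants $(C,p)$ for $\xi$ and an auxiliary exponent $p' \in (p,1)$.

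For lower density, I may assume $(\eta) \subsetneq (\xi)$ with $\eta \leq \xi$ after replacing $\eta$ by an ampliation. Inductively set $q_0 = 0$, and at stage $k$ choose $n_k > q_{k-1}$ with $\xi_{n_k}/\eta_{n_k} \geq Ck^p$ (possible since $\xi \neq O(\eta)$), then set $q_k = \lceil k n_k \rceil$ and define
\[
\zeta_i = \begin{cases} \xi_i, & i \in (q_{k-1}, n_k], \\ \max\bigl(\xi_{q_k},\, \tfrac{1}{C} \xi_{n_k} (n_k/i)^{p'}\bigr), & i \in (n_k, q_k]. \end{cases}
\]
On each dip block, $\zeta$ descends as a $p'$-power law from $\approx \xi_{n_k}/C$ until it meets the plateau level $\xi_{q_k}$, and remains there until the block ends. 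Monotonicity and $\zeta \leq \xi$ both follow from Potter using $p' > p$; the inequality $\zeta \geq \eta$ on dip blocks holds since $\zeta \geq \xi_{q_k} \geq \xi_{n_k}/(Ck^p) \geq \eta_{n_k} \geq \eta_i$, again using Potter for $\xi$ together with the choice of $n_k$. For strict ideal inequivalence, at $i = n_k$ one has $\zeta_{n_k}/\eta_{n_k} = \xi_{n_k}/\eta_{n_k} \to \infty$; and at the transition index $i_k^* \asymp n_k k^{p/p'}$ where the power law hits the plateau, a short computation from Potter yields $\xi_{i_k^*}/\zeta_{i_k^*} \gtrsim k^{p(p'-p)/p'} \to \infty$.

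The main technical step is to show that $\zeta$ is itself regular, i.e., satisfies Potter with some exponent $<1$ and a constant independent of $k$. This reduces to a case analysis of the ratios $\zeta_m/\zeta_n$ for $n \geq m$ according to whether $m$ and $n$ lie in matching regions, power-law pieces of dip blocks, or plateaus (possibly in different blocks); in each case the algebraic identities $q_k = k n_k$ and $i_k^* \asymp n_k k^{p/p'}$ combine with Potter for $\xi$ to yield $\zeta_m/\zeta_n \leq C'(n/m)^{p'}$ with $C'$ depending only on $C$. The delicate case, and the main obstacle, is when $m$ sits in an earlier matching region and $n$ in a later plateau: there the exponents $p$ and $p'$ must be balanced exactly, and it is the choice $q_k = k n_k$ (rather than a faster growing aspect ratio) that keeps the cross-block Potter constant uniformly bounded.

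Upper density is completely analogous with the roles of $\xi$ and $\eta$ interchanged. After reducing to $\xi \leq \eta$, choose $n_k > q_{k-1}$ with $\eta_{n_k}/\xi_{n_k} \geq Ck^p$ and $q_k = \lceil k n_k \rceil$, set $\zeta_i = \xi_i$ on matching regions, and on each dip block put $\zeta_i = \max\bigl(\xi_i,\, \min(\eta_i,\, \xi_{n_k}(n_k/i)^{p''})\bigr)$ with a slower-decay exponent $p'' \in (0, p)$; the parallel case analysis yields the regularity of $\zeta$ together with the inequivalences $(\xi) \subsetneq (\zeta) \subsetneq (\eta)$.
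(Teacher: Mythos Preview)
Your overall strategy---power-law dips calibrated by the Potter characterization of regularity---is the right one and matches the paper's. But there is a concrete gap in the lower-density construction, and the upper-density sketch inherits a structural problem.

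For lower density you set $q_k=\lceil kn_k\rceil$ and take the plateau level to be $\xi_{q_k}$. The Potter inequality gives only a \emph{lower} bound $\xi_{q_k}\ge C^{-1}k^{-p}\xi_{n_k}$; there is no upper bound beyond $\xi_{q_k}\le\xi_{n_k}$. If $\xi$ happens to be nearly flat on $[n_k,kn_k]$ (which is perfectly compatible with regularity---think of $\xi_n=2^{-j}$ for $n\in[2^j,2^{j+1})$), then $\xi_{q_k}>C^{-1}\xi_{n_k}$ and your max collapses to $\zeta_i=\xi_{q_k}$ on the entire dip block, so $\xi_i/\zeta_i\le\xi_{n_k}/\xi_{q_k}<C$ there. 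Since $\zeta=\xi$ on the matching regions, you never get $\xi\ne O(\zeta)$, and $(\zeta)\subsetneq(\xi)$ fails. Your assertion $i_k^*\asymp n_k k^{p/p'}$ implicitly assumes $\xi_{q_k}\asymp k^{-p}\xi_{n_k}$, which is exactly the unavailable upper bound. The paper fixes this by reversing the roles: it sets the plateau level to $k^{-p}\xi_{n_k}$ \emph{first} and then defines $q_k$ as the last index where $\xi_i\ge k^{-p}\xi_{n_k}$. This forces $\zeta_{kn_k}=k^{-p}\xi_{n_k}$ while Potter gives $\xi_{kn_k}\ge Ck^{-p_o}\xi_{n_k}$, so $\xi_{kn_k}/\zeta_{kn_k}\ge Ck^{p-p_o}\to\infty$.

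For upper density your sketch places the perturbation block \emph{after} $n_k$, whereas the paper places it \emph{before}: it takes $q_k<n_k$, keeps $\zeta=\xi$ on $[n_{k-1},q_k]$, and lets $\zeta$ rise on $(q_k,n_k)$ toward $k^p\xi_{n_k}$. The paper also needs the non-summability of $\xi$ (it chooses $q_k$ large enough that $\sum_{n_{k-1}}^{q_k}\xi_i\ge n_{k-1}\xi_1$) to control the arithmetic mean across block boundaries; your sketch does not account for this. Finally, note that the paper verifies regularity of $\zeta$ by directly estimating $(\zeta_a)_j$ rather than by a Potter case analysis; this is shorter and sidesteps the delicate cross-block case you flag.
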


\begin{proof}

Let $(\xi)$ be an am-stable principal ideal, i.e., one whose
generating sequence $\xi$ is regular, namely,
$\xi_a \leq M\xi$ for some $M>1$. Equivalently, $\xi$ satisfies the
Potter-type condition
$\xi_n \geq C(\frac{m}{n})^{p_o} \,\xi_m$ for some $0< C \leq 1$,
$0 < p_o < 1$ and all $n \geq m$ \cite[Theorem 3.10 and Remark 3.11]{DFWW}.
(See also \cite[Proposition 2.2.1]{BGT89}, \cite{Aljan-Aran77}).
Choose any $ p_o < p < 1$.

First we show the lower density of $\mathscr {SPL}$ in $\mathscr {PL}$.
Let $(\eta)\subsetneq (\xi)$ and assume without loss of generality
that $\eta \leq \xi$.
Construct inductively two increasing sequences of indices $q_{k-1} <
n_k < q_k $ as follows.
Choose an integer $k_o \geq C^{-\frac{1}{p-p_o}}$, set $ q_{k_o} =
1$, and assume
the construction up to $k-1\geq k_o$.
Since $\xi\neq O(\eta)$, choose $n_k > q_{k-1} $ so that $\xi_{n_k}
\geq k^p\eta_{n_k}$ and choose
$ q_k$  to be the largest integer $i$ for which $\xi_i\geq
k^{-p}\xi_{n_k}$. Then
$ \xi_{kn_k} \geq Ck^{-p_o}\xi_{n_k} > k^{-p}\xi_{n_k}$ and hence
$n_k < kn_k \leq q_k$.
Now define the sequence
\[
\zeta_i :=
\begin{cases}
\xi_i                                &\text{on $(q_{k-1},n_k]$} \\
\max\left(k^{-p},\, C(\frac{n_k}{i})^p\right)\xi_{n_k} &\text{on $(n_k,q_k]$}\\
\end{cases}
\]
As a further consequence of the Potter-type condition and of the
inequality just obtained,
$\zeta_{n_k} = \xi_{n_k} \geq \xi_{n_k+1} \geq \max (k^{-p},\,
C(\frac{n_k}{n_k+1})^p)\xi_{n_k}
= \zeta_{n_k+1}$.
Also, $\zeta_{q_k} \geq k^{-p}\xi_{n_k} > \xi_{q_k+1} = \zeta_{q_k+1}$. Thus
$\zeta$ is monotone nonincreasing. By the definition of $q_k$ and by
the Potter-type condition,
it follows that
$\zeta_i \leq \xi_i$ for all $i$. By assumption,
$\zeta_i = \xi_i \geq \eta_i$ for all $i \in (q_{k-1},n_k]$, while
$\zeta_i \geq k^{-p} \xi_{n_k} \geq \eta_{n_k} \geq \eta_i$ for all
$i \in (n_k,q_k]$.
Thus $ \eta \leq \zeta \leq \xi$ and hence $(\eta) \subset (\zeta)
\subset (\xi)$.

Next we prove that the sequence $\zeta$ is regular and hence the
principal ideal $(\zeta)$ is am-stable.
If $j\in\ (q_{k-1},n_k]$, then
$(\zeta_a)_j\leq\ (\xi_a)_j \leq M\xi_j = M\zeta_j$.
If $j \in (n_k, q_k]$, then
\begin{align*}
j (\zeta_a)_j &= \sum_{1}^{n_k}\zeta_i + \sum_{n_k+1}^j
\max\left(\frac{1}{k^p},\, C(\frac{n_k}{i})^p\right)\xi_{n_k} \\
&\leq  \sum_{1}^{n_k}\xi_i +
(j-n_k)\frac{1}{k^p}\xi_{n_k}+C\sum_{n_k+1}^j (\frac{n_k}{i})^p
\xi_{n_k}\\
&\leq  n_k(\xi_a)_{n_k} + \frac{j}{k^p}\xi_{n_k}+C\sum_{i=2}^j
(\frac{n_k}{i})^p \xi_{n_k}\\
&\leq Mn_k\xi_{n_k} + \frac{j}{k^p}\xi_{n_k} +
\frac{j}{1-p}C(\frac{n_k}{j})^p\xi_{n_k}\\
&\leq j\left(\frac{1}{k^p}\xi_{n_k} +
(\frac{1}{1-p}+\frac{M}{C})C(\frac{n_k}{j})^p\xi_{n_k}\right)\\
&\leq j(1+\frac{1}{1-p} +\frac{M}{C})\zeta_j.
\end{align*}
Thus $\zeta_a = O(\zeta)$, i.e., $\zeta$ is regular.
Since $\zeta_{n_k}=\xi_{n_k}  \geq k^p\eta_{n_k}$, it follows that
$\zeta \neq O(\eta)$.

>From the Potter-type inequality, $\xi_{kn_k} \geq Ck^{-{p_o}}\xi_{n_k} = Ck^{-(p_o-p)}k^{-p}\xi_{n_k}
=Ck^{p-p_o}\zeta_{kn_k}$ and thus $\xi \neq O(\zeta)$.
As $\zeta$ is regular and hence satisfies the $\Delta_{1/2}$ condition, we conclude that $(\eta) \subsetneq (\zeta) \subsetneq (\xi)$.

Now we prove the upper density of $\mathscr {SPL}$ in $\mathscr {PL}$.
Let  $M > 1$, $p_o<p<1$, and $0<C \le 1$ be as above and let $(\xi)$ be am-stable with $(\xi)\subsetneq (\eta)$ for some principal ideal $(\eta)$, and
assume without loss of generality
  that $\xi \leq \eta$. Construct inductively two increasing sequences
of indices
$n_{k-1} <  q_k < n_k $ as follows.
Choose  an integer $k_{o} \geq 2C^{-1/p}$, set $n_{k_o} = 1$, and
assume the construction up to $k-1\geq k_o$.
Since $\xi$ is regular and hence not summable, choose $q_k > n_{k-1}$ such that
$\sum_{n_{k-1}}^{q_k} \xi_i \geq n_{k-1} \xi_1$.
Since $\eta \neq O(\xi)$, choose an integer
$n_k$ such that $\eta_{n_k} \geq k^p \xi_{n_k}$ and $\xi_{n_k} \leq
k^{-p}\xi_{q_k}$.
By increasing if necessary $q_k$, assume without loss of generality that $q_k$
is the largest integer $i$ such that $\xi _{i} \geq k^p \xi_{n_k}$.
Clearly, $q_k < n_k$. Next, define the sequence
\begin{align*}
\zeta_i =
  \begin{cases}
     \xi_i &\text{$i \in [n_{k-1},q_k]$}\\
     \min \left(k^p, \frac{1}{C} (\frac{n_k}{i})^p \right) \xi_{n_k}
&\text{$i \in (q_k,n_k)$}
  \end{cases}
\end{align*}
By definition, $\zeta_{q_k} = \xi_{q_k} \geq k^p \xi_{n_k} \geq
\zeta_{q_k+1}$ and since
$k^p > k_o^p \geq \frac{2^p}{C} >
\frac{1}{C}\left(\frac{n_k}{n_k-1}\right)^p$,  it follows that
$\zeta_{n_k-1}=\frac{1}{C}(\frac{n_k}{n_k-1})^p\xi_{n_k} > \xi_{n_k}
= \zeta_{n_k}$.
Thus $\zeta$ is monotone nonincreasing.
Since $\xi_i = \zeta_i $ for $i \in [n_{k-1}, q_k]$ and $\xi_{i} < \zeta_{i}$
for $i \in (q_k, n_k)$ because $\xi_{i} \leq \frac{1}{C} \left(
\frac{n_k}{i} \right)^{p_{o}} \xi_{n_k}
< \frac{1}{C} \left( \frac{n_k}{i} \right)^p \xi_{n_k}$ and $\xi_{i}
< k^p\xi_{n_k}$
  by the definition of $q_k$, it follows that
$\xi \leq \zeta$. Since $\zeta_{i} = \xi_i \leq \eta_{i}$ for $i \in
[n_{k-1}, q_k]$
and also $\zeta_{i} \leq k^p\xi_{n_k} \leq \eta_{n_k} \leq \eta_{i}$
for $i \in (q_k, n_k)$, it follows that
  $ \zeta \leq \eta$. Therefore $(\xi) \subset (\zeta) \subset (\eta)$.

Next we prove that the sequence $\zeta$ is regular and hence the
principal ideal $(\zeta) $ is am-stable.
If $j \in (q_k, n_k)$, then recalling that by definition $\xi_1=\zeta_1$, $\xi\le \zeta$, and that $\sum_{n_{k-1}}^{q_k} \xi_i \geq n_{k-1} \xi_1$,
\begin{align*}
j(\zeta_a)_j &= \sum_{1}^{n_{k-1}-1} \zeta_i + \sum_{n_{k-1}}^{q_k} \xi_i +
\sum_{q_k+1}^{j} \min \left(k^p,\,\frac{1}{C} (\frac{n_k}{i})^p
\right) \xi_{n_k} \\
&\le n_{k-1}\xi_1 + \sum_{n_{k-1}}^{q_k} \xi_i +
\sum_{2}^{j} \min \left(k^p,\,\frac{1}{C} (\frac{n_k}{i})^p \right) \xi_{n_k}\\
&\leq 2 \sum_1^j \xi_i+\min \left(\sum_{2}^{j}k^p,\,\sum_{2}^{j}
\frac{1}{C} (\frac{n_k}{i})^p\right)\xi_{n_k}\\
&\leq 2 j (\xi_a)_j  + \min \left(jk^p
,\,\frac{j}{1-p}\frac{1}{C}(\frac{n_k}{j})^p \right)\xi_{n_k}\\
&\le 2 Mj \zeta_j  + \frac{j}{1-p}\min \left(k^p
,\,\frac{1}{C}(\frac{n_k}{j})^p \right)\xi_{n_k}\\
&= (2M+\frac{1}{1-p})j\zeta_j.
\end{align*}
\noindent If $j \in [n_k, q_{k+1}]$, then by using the above inequality and the definition of $\zeta$,
and recalling that  $\zeta_{n_k-1}=\frac{1}{C} \left( \frac{n_k}{n_k - 1} \right)^p \xi_{n_k}$, we have
\begin{align*}
j(\zeta_{a})_{j} &= (n_k - 1)(\zeta_a)_{n_k-1} + \sum_{n_k}^j \xi_i \leq (2M + \frac{1}{1 - p})(n_k - 1)\zeta_{n_k-1} + \sum_1^j \xi_i\\
&= (2M + \frac{1}{1 - p}) \frac{n_k }{C} \left( \frac{n_k}{n_k - 1} \right)^{p-1} \xi_{n_k} + \sum_1^j \xi_i
  \leq \frac{1}{C}\left ((2M + \frac{1}{1 - p})n_k\xi_{n_k} +
\sum_1^j \xi_i\right)\\
& \leq \frac{1}{C}(2M + \frac{1}{1 - p}+1) j (\xi_a)_j
  \leq \frac{1}{C}(2M + \frac{1}{1 - p}+1) Mj \xi_j
  =\frac{1}{C}(2M + \frac{1}{1 - p}+1) Mj\zeta_j.
\end{align*}
\noindent Thus $\zeta_{a} = O(\zeta)$, i.e., $\zeta$ is regular and hence the
principal ideal $(\zeta)$ is am-stable.
Since $\eta_{n_k} \geq k^p\xi_{n_k} = k^p\zeta_{n_k}$ it follows
that $\eta \neq O(\zeta)$.
Set $m_k= \lceil \frac{n_k}{kC^{1/p}} \rceil$, the smallest integer
majorizing $\frac{n_k}{kC^{1/p}}$.
Since $\frac{m_k}{n_k}< \frac{1}{kC^{1/p}}+\frac{1}{n_k}<1$ because
$k > k_o$, and using the inequality
$\xi_{m_k} \le \frac{1}{C} ( \frac{n_k}{m_k})^{p_o} \xi_ {n_k}
< \frac{1}{C} (\frac{n_k}{m_k} )^p \xi_{n_k}\leq k^p\xi_{n_k}$, it
follows that $q_k < m_k < n_k$.
By the same inequalities,
\[
\zeta_{m_k}
= \frac{1}{C} (\frac{n_k}{m_k} )^p\xi_{n_k}
= (\frac{n_k}{m_k} )^{p-{p_o}}\frac{1}{C} ( \frac{n_k}{m_k})^{p_o} \xi_ {n_k}
>   \left(\frac{1    }{kC^{1/p}}+\frac{1}{n_k}\right)^{{p_o}-p}\xi_{m_k}.
\]
and hence $\zeta \neq O(\xi)$. 
As $\zeta$ is regular and hence satisfies  
the $\Delta_{1/2}$-condition, it follows that $(\xi) \subsetneq (\zeta) \subsetneq  (\eta)$, which concludes the proof.
\end{proof}

\noindent Now we consider am-$\infty$ stable principal ideals, i.e.,
ideals with $\infty$-regular generating sequences.
The main tool for proving the upper and lower density of
$S_{\infty}PL$ in $\mathscr {PL}$ is the Potter-type inequality
for $\infty$-regular sequences \cite[Theorem 4.12]{vKgW04-Traces} 
(see proof of Theorem \ref {T: SinftyPL density}).
Similar to the am-case, but much less trivial,
  a sequence $\xi$ is $\infty$-regular,
i.e., $(\xi)_{a_\infty} =(\xi)$ if and only if $\xi_{a_\infty} =
O(\xi)$ (see [ibid.]).
The main technical complication with respect to the arithmetic mean
case is that $\infty$-regular sequences
  in general do not satisfy the $\Delta_{1/2}$-condition
(e.g., see \cite [Example 4.5(ii)] {vKgW04-Traces}). Thus $D_m$
considerations are unavoidable,
i.e., to prove that the inclusion of two principal ideals $(\eta)
\subset (\xi)$ is proper,
  it is necessary to show that $\xi \neq O(D_m\eta)$ for all positive
integers m.
  We first need the following lemmas.

\begin{lemma}\label{L: am infty under}
If $(\xi) \supsetneq F$, then $(\xi)\supsetneq (\eta) \supsetneq F$ for some
am-$\infty$ stable principal ideal $(\eta)$.
\end{lemma}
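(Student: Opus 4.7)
The plan is to construct the required $\eta$ explicitly by thinning $\xi$ along a sufficiently sparse subsequence of indices, so that $\eta$ is summable and $\infty$-regular while retaining enough information about $\xi$ to stay inside $(\xi)$, yet losing enough to guarantee $(\eta) \subsetneq (\xi)$.

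Since $(\xi) \supsetneq F$, the generator $\xi \in c_o^*$ is not eventually zero, so $\xi_n > 0$ for every $n$. I would define integers $a_1 < a_2 < \cdots$ inductively by $a_1 := 1$ and, for $n \geq 1$, letting $a_{n+1}$ be the smallest integer satisfying both $a_{n+1} \ge 2 a_n$ and $\xi_{a_{n+1}} \le \xi_{a_n}/2$; such an integer exists because $\xi_j \to 0$. A trivial induction yields $a_n \ge 2^{n-1}$ (in particular $a_n \ge n$) and $\xi_{a_n} \le \xi_1/2^{n-1}$. Setting $\eta_n := \xi_{a_n}$ then produces a strictly positive sequence in $c_o^*$ (so $F \subsetneq (\eta)$), with $\eta \le \xi$ (so $(\eta) \subset (\xi)$) and $\eta \in \mathscr L_1$ by the geometric bound. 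The telescoping inequality $\xi_{a_j} \le \xi_{a_n}/2^{j-n}$ for $j > n$ then gives $\sum_{j > n}\eta_j \le \eta_n$, so $\eta_{a_\infty,n} \le \eta_n/n$; hence $\eta_{a_\infty} = O(\eta)$, and $(\eta)$ is am-$\infty$ stable by the characterization in \cite[Theorem 4.12]{vKgW04-Traces}.

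The remaining step, which I expect to be the main obstacle, is to verify the strict inclusion $(\eta) \subsetneq (\xi)$, i.e.\ $\xi \ne O(D_m\eta)$ for every $m \in \mathbb N$; merely rescaling $\xi$ by a decaying factor would fail here, because for $\xi$ that already decreases super-geometrically (say $\xi_n = 2^{-n!}$) one would often land back in the same principal ideal. Since $(D_m\eta)_n = \eta_{\lceil n/m\rceil} = \xi_{a_{\lceil n/m\rceil}}$, I would evaluate $\xi_n/(D_m\eta)_n$ along the subsequence $n = a_k$: writing $l_k := \lceil a_k/m\rceil$, the exponential growth $a_k \ge 2^{k-1}$ ensures $l_k \ge 2k$ for all $k$ sufficiently large, and telescoping the inequalities $\xi_{a_{j+1}} \le \xi_{a_j}/2$ gives
\[
\frac{\xi_{a_k}}{(D_m\eta)_{a_k}} = \frac{\xi_{a_k}}{\xi_{a_{l_k}}} \ge 2^{l_k - k} \ge 2^k \to \infty.
\]
Thus $\xi \ne O(D_m\eta)$ for each $m$, and $F \subsetneq (\eta) \subsetneq (\xi)$ as required. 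The two simultaneous conditions in the definition of $a_n$ do distinct jobs: the index doubling $a_{n+1} \ge 2 a_n$ forces the strict containment regardless of how fast $\xi$ itself decreases, while the value halving $\xi_{a_{n+1}} \le \xi_{a_n}/2$ secures both summability and $\infty$-regularity of $\eta$.
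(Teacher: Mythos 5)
Your construction is correct, and it follows a genuinely different route from the paper's. The paper takes the closed formula $\eta_j := 2^{-j}\prod_{j}^{j^2}\xi_i$ (after normalizing $\xi_1<1$): the factor $2^{-j}$ gives summability and the estimate $\eta_{a_\infty}\le \frac{1}{n}\eta$, while the product being dominated by its last factor gives $\eta_n < 2^{-n}\xi_{n^2}\le 2^{-n}\xi_{mn}$, so $\xi\ne O(D_m\eta)$ is read off at the indices $mn$. You instead thin $\xi$ along a subsequence $a_k$ built with two separate geometric requirements, and your accounting of which requirement does which job is exactly right: the value-halving $\xi_{a_{k+1}}\le\frac12\xi_{a_k}$ yields $\sum_{j>n}\eta_j\le\eta_n$, hence $\eta_{a_\infty}\le\frac{1}{n}\eta$ and am-$\infty$ stability via the same characterization \cite[Theorem 4.12]{vKgW04-Traces} the paper invokes, while the index-doubling $a_{k+1}\ge 2a_k$ forces $\lceil a_k/m\rceil\ge 2k$ eventually, so the telescoped ratio $\xi_{a_k}/\xi_{a_{l_k}}\ge 2^{l_k-k}\ge 2^k$ kills $\xi=O(D_m\eta)$ for every fixed $m$ — independently of how fast $\xi$ decays, which is precisely the danger you flag. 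All the small supporting facts check out: $(\xi)\supsetneq F$ does force $\xi_n>0$ for all $n$, the minimal $a_{k+1}$ exists since $\xi_n\downarrow 0$ with positive entries, $\eta\le\xi$ since $a_n\ge n$, and $(\eta)\ne F$ since $\eta$ has full support. Your version avoids the paper's normalization $\xi_1<1$ and is arguably more transparent about why properness holds; the paper's buys a one-line definition of $\eta$ at the cost of the product trick.
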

\begin{proof}
Assume without loss of generality that $\xi_1 < 1$ and define $\eta
:= \,<2^{-j} \prod_{j}^{j^2}\xi_i>$. Clearly, the sequences
$<\prod_{j}^{j^2}\xi_i>$ and hence $\eta$ are monotone decreasing,
$\eta \leq\xi$ and $\eta_i > 0$ for all $i$.
Furthermore, for every $m$ and every $n \geq m$, $({D_m}\eta)_{mn} = \eta_n
= 2^{-n}\prod_{n}^{n^2}\xi_i \ < 2^{-n}\xi_{n^2}  \le 2^{-n}\xi_{mn}$, hence
$\xi \neq O(D_m\eta)$. Thus $(\xi)\supsetneq (\eta) \supsetneq F$.  Moreover,
\[
(\eta_{a_\infty})_n = \frac {1}{n} \sum_{n+1}^\infty
(2^{-j}\prod_{j}^{j^2}\xi_i)
  \leq \frac {1}{n}\left(\prod_{n}^{n^2}\xi_i\right) \sum_{n+1}^\infty 2^{-j} =
\frac {1}{n} \eta_n,
\]
hence $\eta$ is $\infty$-regular by
\cite [Theorem 4.12]{vKgW04-Traces} and therefore $(\eta)$ is am-$\infty$ stable.
\end{proof}

\begin{lemma}\label{L: density of summable}
If $\{0\}\ne (\xi)\subsetneq (\eta)$ and $\xi$ is summable,
then $(\xi) \subsetneq (\zeta) \subsetneq (\eta)$ for some summable $\zeta$.
\end{lemma}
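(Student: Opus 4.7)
The plan is to construct explicitly a monotone summable sequence $\zeta$ satisfying $\xi \le \zeta \le 2\eta$, $\zeta \ne O(D_m\xi)$, and $\eta \ne O(D_m\zeta)$ for every $m \in \mathbb N$; these will yield $(\xi) \subsetneq (\zeta) \subsetneq (\eta)$. Two facts are key. First, since $(\xi) \subsetneq (\eta)$ in $\mathscr L$, we have $\eta \ne O(D_k\xi)$ for every $k$, so the set $S_k := \{n : \eta_n \ge k^{3} \xi_{\lceil n/k\rceil}\}$ is infinite for each $k$. Second, summability of the monotone $\xi$ gives $j\xi_j \to 0$, which allows us to make $\xi_{t_k}$ vanish much faster than $1/t_k$ at chosen indices $t_k$.

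Choose inductively $n_1 < n_2 < \cdots$ with $n_k \in S_k$ sufficiently large that, setting $t_k := \lceil n_k/k \rceil$, one has $n_k > k\,n_{k-1}$ (so $t_k > n_{k-1}$), $t_k \xi_{t_k} \le k^{-4}$, and $k\xi_{t_k} \le (k-1)\xi_{t_{k-1}}$. Each constraint is met for all sufficiently large elements of $S_k$: the second by $j\xi_j \to 0$, the third by $\xi_j \to 0$. Define the perturbation $\mu_i := c_k := k\xi_{t_k}$ for $i \in (n_{k-1}, n_k]$; the third constraint forces $c_k$ to be nonincreasing, so $\mu \in \co*$. Using $n_k \le k t_k$, the total mass is bounded by
\[
\sum_i \mu_i = \sum_k (n_k - n_{k-1})c_k \le \sum_k n_k\cdot k\xi_{t_k} \le \sum_k k^{2}(t_k\xi_{t_k}) \le \sum_k k^{-2} < \infty,
\]
so $\mu$ is summable. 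Also $c_k \le \eta_{n_k}$ since $n_k \in S_k$ and $k^3 \ge k$, so $\mu_i \le \eta_{n_k} \le \eta_i$ on the $k$-th block. Setting $\zeta := \xi + \mu$, we obtain a monotone summable element of $\co*$ with $\xi \le \zeta \le 2\eta$, hence $(\xi) \subset (\zeta) \subset (\eta)$.

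For strict inclusions, fix $m \in \mathbb N$ and any $k \ge m$, so that $\lceil n_k/m \rceil \ge \lceil n_k/k \rceil = t_k$. For $(\xi) \subsetneq (\zeta)$:
\[
\zeta_{n_k} \ge \mu_{n_k} = c_k = k\xi_{t_k} \ge k\xi_{\lceil n_k/m\rceil} = k(D_m\xi)_{n_k},
\]
which is unbounded in $k$, so $\zeta \ne O(D_m\xi)$. For $(\zeta) \subsetneq (\eta)$: the condition $t_k > n_{k-1}$ places $t_k$ inside the $k$-th block $(n_{k-1}, n_k]$, so $\mu_{t_k} = c_k$ and $\zeta_{t_k} = \xi_{t_k} + c_k = (k+1)\xi_{t_k}$. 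Combining this with $\zeta_{\lceil n_k/m\rceil} \le \zeta_{t_k}$ and $\eta_{n_k} \ge k^{3}\xi_{t_k}$ yields
\[
\frac{\eta_{n_k}}{(D_m\zeta)_{n_k}} \ge \frac{k^{3}\xi_{t_k}}{(k+1)\xi_{t_k}} = \frac{k^{3}}{k+1} \longrightarrow \infty,
\]
so $\eta \ne O(D_m\zeta)$.

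The main obstacle is the simultaneous balancing of the constraints on the sparsity sequence: each $n_k$ must lie in the infinite set $S_k$ (to give the lower bound on $\eta_{n_k}$), must be sparse enough that $n_k > k n_{k-1}$ (to keep $t_k$ inside the $k$-th block, which is what makes the $(\zeta) \subsetneq (\eta)$ computation produce a factor $(k+1)\xi_{t_k}$ rather than being swamped by previous blocks' values of $\mu$), and must make $t_k\xi_{t_k}$ small enough to control the total mass of $\mu$. All three constraints are achievable for $n_k \in S_k$ sufficiently far out, which is why the sparsity step is carried out carefully in the induction.
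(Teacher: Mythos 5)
Your construction is essentially correct for the main case, but it has one genuine gap: you tacitly assume $\xi_i>0$ for all $i$, whereas the hypotheses allow $(\xi)=F$, i.e.\ $\xi$ finitely supported (any nonzero finitely supported $\xi$ is summable, and $F\subsetneq(\eta)$ for every $(\eta)\neq F$). In that case $t_k\to\infty$ forces $\xi_{t_k}=0$ for all large $k$, so $c_k=k\xi_{t_k}=0$, the perturbation $\mu$ is finitely supported, and $\zeta=\xi+\mu$ again generates $F=(\xi)$: the inclusion $(\xi)\subset(\zeta)$ is not strict, and both of your strictness estimates degenerate (the bound $\zeta_{n_k}\ge k(D_m\xi)_{n_k}$ reads $0\ge 0$, and in the second display you divide by $\xi_{t_k}$). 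The paper disposes of this case separately before the induction, e.g.\ by taking $\zeta=\langle 2^{-n}\eta_n\rangle$, which is summable, not finitely supported, and satisfies $(\zeta)\subset(\eta)$; your argument needs the same (easy) preliminary reduction to $\xi_i>0$ for all $i$, after which it goes through.

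Apart from that, your route is very close to the paper's: the paper also chooses $n_k$ with $\eta_{n_k}\ge k\,\xi_{\lceil n_k/k\rceil}$ and uses $i\xi_i\to 0$ (summability of $\xi$) to keep the modification summable, the difference being that it flattens $\xi$ to the constant $k\xi_{\lceil n_k/k\rceil}$ on part of each block instead of adding a step perturbation, proves only $(\xi)\subsetneq(\zeta)\subset(\eta)$, and then gets strictness on the right from the no-gaps result for $\mathscr{PL}$ (Corollary \ref{C: count gen upper density}(ii)); you instead verify $\eta\ne O(D_m\zeta)$ directly, which makes the proof self-contained at the cost of the extra bookkeeping ($n_k>kn_{k-1}$, monotonicity of $c_k$, the exponent $k^3$). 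One small point to fix in the write-up: the pointwise bound $\zeta\le 2\eta$ presupposes $\xi\le\eta$, which is not automatic; either state the standard normalization (replace $\eta$ by $KD_m\eta$, which generates the same ideal and still satisfies $\eta\ne O(D_k\xi)$ for all $k$), or simply note that $(\zeta)\subset(\eta)$ already follows from $\mu\le\eta$ on each block together with $\xi\in\Sigma((\eta))$.
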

\begin{proof}
By Corollary \ref{C: count gen upper density}(ii), it is enough to find a  summable $\zeta$ such that $(\xi) \subsetneq (\zeta) \subset (\eta)$. 
The case when $\xi$ is finitely supported, i.e., $(\xi)=F$ is trivial: it is enough to construct a summable but not finitely supported sequence
$\zeta$ with $(\zeta) \subset (\eta)$, e.g., $<\frac{\eta_n}{2^n}>$.
Thus assume $\xi_i > 0$ for all $i$, and without loss of generality, assume that $\xi \le \eta$.
Construct inductively two increasing sequences of indices $n_{k-1} < q_k < n_k$ starting with $n_1=1$
and satisfying the following three conditions:
$\eta_{n_k} \geq k\xi_{\lceil\frac{n_k}{k}\rceil}$, $\xi_{\lceil\frac{n_k}{k}\rceil} < \frac{1}{k}\xi_{n_{k-1}}$,
and $n_k\xi_{{\lceil\frac{n_k}{k}\rceil}}\leq k^{-3}$.
Indeed, since $\eta \neq O(D_k\xi)$ for all $k$, there are infinitely many indices $i$ such that $\eta_i\geq k\xi_{\lceil\frac{i}{k}\rceil}$,
and among those indices, we can choose $n_k$ that satisfies the second condition since $\xi_i \to 0$ and the third since 
$i\xi_i \to 0$ by the summability of $\xi$. 
Set $q_k$ to be the largest index $i$ such that $\xi_i \geq k\xi_{{\lceil\frac{n_k}{k}\rceil}}$. 
Then $n_{k-1} \leq q_k < {\lceil\frac{n_k}{k}\rceil} < n_k$. 
Next, for a sequence $\zeta \in \text{c}_\text{o}^*$ with $\xi \leq\zeta\leq\eta$ define:
\[
\zeta_i :=
\begin{cases}
\xi_i       &\text{on $(n_{k-1},q_k)$}\\
k\xi_{\lceil\frac{n_k}{k}\rceil}
        &\text{on $[q_k,n_k]$}\\
\end{cases}
\]
Then $\zeta\neq O(D_m\xi)$ for all $m$ since $\zeta_{n_k}
= k\xi_{\lceil\frac{n_k}{k}\rceil} = k(D_k\xi)_{n_k} \geq
k(D_m\xi)_{n_k}$ for all $k\geq m$. Thus
$\xi \subsetneq\zeta\subset\eta $. Finally, $\zeta$ is summable because
\[
\sum_{i=1}^{\infty}\zeta_i
< \sum_{i=1}^{\infty}\xi_i + \sum_{k=1}^{\infty}\sum_{i=q_k}^{n_k}
k\xi_{\lceil\frac{n_k}{k}\rceil}
< \sum_{i=1}^{\infty}\xi_i
+\sum_{k=1}^{\infty}kn_k\xi_{\lceil\frac{n_k}{k}\rceil}
\leq \sum_{i=1}^{\infty}\xi_i + \sum_{k=1}^{\infty}k^{-2}<\infty.
\]
\end{proof}

\noindent In the terminology of this paper, the above lemma states
that the lattice of principal ideals contained
in $\mathscr L_1$ (i.e., with trace class generators) is upper dense
in $\mathscr {PL}$.
By the lack of gaps in $\mathscr {PL}$ (Corollary \ref{C: count gen upper density}(ii),
the lattice of principal ideals contained
in $\mathscr L_1$ is also lower dense in $\mathscr {PL}$.

\begin{theorem} \label{T: SinftyPL density}
$\mathscr {S_{\infty}PL}$ is upper and lower dense in $\mathscr {PL}$.
\end{theorem}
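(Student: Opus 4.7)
My plan is to mirror the proof of Theorem \ref{T: SPL density in PL}, with two essential modifications dictated by the am-$\infty$ setting. First, in place of the Potter-type inequality $\xi_n \ge C(m/n)^{p_o}\xi_m$ available for regular sequences with $0<p_o<1$, I will use the analogous Potter-type inequality for $\infty$-regular sequences from \cite[Theorem 4.12]{vKgW04-Traces}, which provides $\xi_n \le C(m/n)^{p_o}\xi_m$ for $n \ge m$ with some $p_o > 1$ (reflecting that $\infty$-regular sequences decay fast enough to be summable). I will fix an auxiliary exponent $1<p<p_o$ for the interpolations. Second, as emphasized in the paragraph preceding Lemma \ref{L: am infty under}, since $\infty$-regular sequences need not satisfy the $\Delta_{1/2}$-condition, strict containment of principal ideals must be established via $\xi \ne O(D_m\eta)$ for every $m \in \mathbb N$, not merely $\xi \ne O(\eta)$.

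For \emph{lower density}, assume $(\eta)\subsetneq(\xi)$ with $\xi$ $\infty$-regular; then $\eta$ is automatically summable and I take $\eta \le \xi$ without loss of generality. Using $\xi \ne O(D_k\eta)$ for each $k$, I inductively select indices $q_{k-1}<n_k<q_k$ with $\xi_{n_k} \ge k^p (D_k\eta)_{n_k}$, taking $q_k$ to be the largest index $i$ for which $\xi_i \ge k^{-p}\xi_{n_k}$; the new Potter-type inequality controls the length of each block from above and forces $q_k$ to be the right order of magnitude larger than $n_k$. I define $\zeta$ by $\zeta_i = \xi_i$ on $(q_{k-1},n_k]$ and $\zeta_i = \max\bigl(k^{-p},\,C(n_k/i)^{p}\bigr)\xi_{n_k}$ on $(n_k,q_k]$, in direct parallel with the lower-density construction in Theorem \ref{T: SPL density in PL}.

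For \emph{upper density}, assume $(\xi)\subsetneq(\eta)$ with $\xi$ $\infty$-regular. Since any am-$\infty$ stable intermediate ideal lies inside $\mathscr L_1$, I first invoke Lemma \ref{L: density of summable} to obtain a summable $\eta'$ with $(\xi)\subsetneq(\eta')\subsetneq(\eta)$, and then construct $\zeta$ between $\xi$ and $\eta'$. Using $\eta'\ne O(D_k\xi)$ for each $k$, I inductively pick $n_{k-1}<q_k<n_k$ with $\eta'_{n_k}\ge k^p(D_k\xi)_{n_k}$ and length controls analogous to those in the am-proof, and set $\zeta_i=\xi_i$ on $[n_{k-1},q_k]$ and $\zeta_i=\min\bigl(k^p,\,\tfrac{1}{C}(n_k/i)^{p}\bigr)\xi_{n_k}$ on $(q_k,n_k)$, paralleling the upper-density construction of Theorem \ref{T: SPL density in PL}.

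In both cases the verification splits into four steps mirroring the am-proof: monotonicity of $\zeta$ at the block joins using the Potter-type inequality; the sandwiching $\eta\le\zeta\le\xi$ (respectively $\xi\le\zeta\le\eta'$), which yields the ideal inclusions; $\infty$-regularity of $\zeta$ via a direct block-by-block estimate of $(\zeta_{a_\infty})_j = \tfrac{1}{j}\sum_{j+1}^\infty \zeta_i$, where the Potter-type bound converts each tail into a geometric-type sum that collapses to $O(\zeta_j)$; and finally the strict containments. I expect the last step to be the main obstacle: in the absence of $\Delta_{1/2}$ one must show $\zeta\ne O(D_m\xi)$ and $\eta'\ne O(D_m\zeta)$ (resp.\ $\xi\ne O(D_m\zeta)$ and $\zeta\ne O(D_m\eta)$) for \emph{every} $m\in\mathbb N$. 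I plan to handle this by restricting to $k\ge m$ and exhibiting a specific index, typically $n_k$ or a companion index of the form $\lceil n_k/(kC^{1/p})\rceil$ analogous to the one used in the am-proof, where the built-in factor $k^p$ or $k^{p-p_o}$ in the construction survives the $D_m$ ampliation and forces the relevant ratio to be unbounded.
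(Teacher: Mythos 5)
Your high-level plan (am-$\infty$ Potter condition, reduction to summable $\eta$ via Lemma \ref{L: density of summable}, strictness via $\xi \ne O(D_m\cdot)$ for all $m$) points in the right direction, but the core of your proof --- transplanting the interpolation formulas of Theorem \ref{T: SPL density in PL} verbatim, with the power bridge pinned at $(n_k,\xi_{n_k})$ and a floor/cap $k^{\mp p}$ --- breaks down, because in the am-$\infty$ case the Potter inequality points the other way. Since $\xi_i \le C(n_k/i)^{p_o}\xi_{n_k}$ for $i\ge n_k$ and you take $1<p<p_o$, your lower-density bridge satisfies $C(n_k/i)^{p}\xi_{n_k}\ge C(n_k/i)^{p_o}\xi_{n_k}\ge \xi_i$, so $\zeta\ge\xi$ on the interpolation blocks: the claimed sandwich $\eta\le\zeta\le\xi$ fails, and in general so does the inclusion $(\zeta)\subset(\xi)$. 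Concretely, take $\xi=\langle n^{-2}\rangle$ (so $C=1$, $p_o=2$), $\eta=\langle n^{-4}\rangle$, $p=3/2$: then $q_k\asymp k^{3/4}n_k$, and at $i_k=\lceil k^{3/8}n_k\rceil$ one gets $\zeta_{i_k}=k^{-9/16}n_k^{-2}$ while $\xi_{i_k}\asymp k^{-3/4}n_k^{-2}$, so $\zeta_{i_k}/(D_m\xi)_{i_k}\asymp k^{3/16}/m^2\to\infty$ for every fixed $m$, i.e., $(\zeta)\not\subset(\xi)$. Symmetrically, in your upper-density construction the Potter bound gives $\xi_i\ge \frac1C(n_k/i)^{p}\xi_{n_k}$ for $i\le n_k$, so your bridge $\min\bigl(k^p,\frac1C(n_k/i)^p\bigr)\xi_{n_k}$ lies below $\xi$ on most of each block, $\xi\le\zeta$ fails, and one checks in the same way that $(\xi)\subset(\zeta)$ can fail. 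A secondary but also genuine gap: a selection gap of size $k^p$ is not enough to make the strictness survive ampliation; the two-sided estimates require a much larger exponent.

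The paper's construction is shaped precisely by these two obstacles. The bridge is anchored at the far endpoint of the block with the value of the \emph{other} sequence --- $\zeta_i=\min\bigl(\xi_i,(q_k/i)^p\eta_{m_k}\bigr)$ on $[m_k,q_k]$ for lower density, and $\zeta_i=\max\bigl(\xi_i,(q_k/i)^p\eta_{n_k}\bigr)$ on $[q_k,n_k]$ for upper density --- so that the sandwich $\eta\le\zeta\le\xi$ (resp.\ $\xi\le\zeta\le\eta$) holds by construction, while the Potter bound is used to show the bridge is comparable to $\zeta$ where needed in the tail estimates. Moreover the indices are chosen with the gap $\xi_{n_k}\ge k^R(D_k\eta)_{n_k}$ (resp.\ $\eta_{n_k}\ge k^R(D_k\xi)_{n_k}$) where $R=\frac{2pp_o}{p_o-p}$; it is exactly this $R$ that makes the computations such as $\zeta_{m_k}\le C^{p/p_o}k^{p-R(1-p/p_o)}\xi_{n_k}=C^{p/p_o}k^{-p}\xi_{n_k}$ work and yields $\xi\ne O(D_m\zeta)$ and $\zeta\ne O(D_m\eta)$ (and their analogues in the upper case) for every $m$. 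Finally, besides Lemma \ref{L: density of summable}, the proof also needs Lemma \ref{L: am infty under} to dispose of the cases $(\eta)=F$ (lower density) and to reduce to $\xi_i>0$ (upper density), and in the upper case an extra summation condition $\sum_{q(n_k)}^\infty\eta_i\le\sum_{n_{k-1}}^\infty\xi_i$ is built into the choice of $n_k$ to make the $\infty$-regularity estimate for $\zeta$ go through; none of these appear in your outline. So the plan as written does not yield a proof without redesigning the interpolation and the index selection along these lines.
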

\begin{proof}
Let $(\xi)$ be an am-$\infty$ stable principal ideal, i.e., $\xi$ is
an $\infty$-regular sequence, in particular, $\xi$ is summable. 
By \cite[Theorem 4.12]{vKgW04-Traces}, $\xi_{a_\infty} \leq M\xi$ for some $M>0$, and
by the same theorem, $\xi_n
\leq C(\frac{m}{n})^{p_o} \,\xi_m$ for some $C \geq 1$, $p_o > 1$ and
all $n \geq m$. Choose  any $1<p<p_o$.

First we prove the lower density of $\mathscr {S_{\infty}PL}$ in
$\mathscr {PL}$.
Let $(\eta)\subsetneq (\xi)$. Then  $\eta \ \leq KD_m\xi$ for some
positive integer $m$ and some $K>0$
and since $(KD_m\xi)=(\xi)$, without loss of
generality we can assume that $\eta\leq \xi$.
The case where $\eta$ is finitely supported, i.e. $(\eta) = F$
follows from Lemma \ref {L: am infty under}
above, so assume that $\eta_i > 0 $ for all $i$.
We construct inductively two increasing sequences of indices
$q_{k-1}<m_k<q_k$ as follows. Choose an integer
  $k_o \geq M+1$, set $q_{k_o}=1$ and
assume the construction up to $ k-1 \geq k_o$. Using the fact that for
all $m \in \mathbb {N}$, $\xi \neq O(D_m\eta)$, choose $n_k \geq
2kq_{k-1}$ such that $\xi_{n_k} \geq k^R(D_k\eta)_{n_k} = k^R
\eta_{m_k}$ where $R:=\frac{2pp_o}{p_o-p}$ and
$m_k:= {\lceil\frac{n_k}k\rceil}$ is the smallest integer majorizing
$\frac{n_k}k$; thus $m_k \geq 2q_{k-1}$.
Then define $q_k$ to be the largest integer $i$ for which $\xi_i \geq
\eta_{m_k}$; thus
$m_k < n_k \leq  q_k$. Next define the sequence
\[
\zeta_i :=
\begin{cases}
\xi_i
        &\text{on $(q_{k-1},m_k)$}\\

\min\,(\xi_i, (\frac{q_k}{i})^p \eta_{m_k})         &\text{on $[m_k,q_k]$}.\\

\end{cases}
\]

Since $\zeta_{m_k-1} = \xi_{m_k-1} \geq \xi_{m_k} \geq \zeta_{m_k}$ and
$\zeta_{q_k} = \eta_{m_k} > \xi_{q_k+1} = \zeta_{q_k+1}$,  it follows
that  $\zeta$ is monotone non-increasing. Clearly,
$\zeta_i \leq \xi_i$ for all i, $\zeta_i \geq \eta_i$ for $i \in
(q_{k-1},m_k)$, and
$\zeta_i \geq \zeta_{q_k} = \eta_{m_k} \geq \eta_i$ for $i \in
[m_k,q_k]$. Thus $\eta \leq \zeta\leq \xi$  and hence
$ (\eta) \subset (\zeta) \subset (\xi)$.

Now we prove that $\zeta$ is am-$\infty$ regular by showing that
$\zeta_{a_\infty} = O(\zeta)$ (see \cite [Theorem 4.12]{vKgW04-Traces}).
If $j\in [m_k,q_k]$, then the Potter-type inequality implies that
$\xi_j \geq \frac{1}{C}(\frac{q_k}{j})^{p_o}\xi_{q_k} \geq
\frac{1}{C}(\frac{q_k}{j})^p\eta_{m_k}$, hence  $\zeta_j \geq
\frac{1}{C}(\frac{q_k}{j})^p\eta_{m_k}$
and thus $\zeta_j\leq (\frac{q_k}{j})^p\eta_{m_k} \leq C\zeta_j$.
Thus, using the fact that
$\xi_{q_k+1} < \eta_{m_k}$ and $q_k\geq k_o \geq M+1$, by definition
of $q_k$ and $k_o$,
\begin{align*}
j(\zeta_{a_\infty})_j &\leq \sum_{j+1}^{q_k} \zeta_i +
\sum_{q_k+1}^{\infty} \xi_i
\leq \sum_{j+1}^{q_k} (\frac{q_k}{i})^p \eta_{m_k} + \xi_{q_k+1}+
(q_k+1)(\xi_{a_\infty})_{q_k+1}\\
&\leq \frac{j}{p-1}(\frac{q_k}{j})^p \eta_{m_k} + \xi_{q_k+1}+
M(q_k+1)\xi_{q_k+1}
\leq (\frac{j}{p-1}(\frac{q_k}{j})^p  + (M+1)q_k)\eta_{m_k}\\
&\leq (\frac{1}{p-1}+M+1 ) j(\frac{q_k}{j})^p \eta_{m_k}
\leq C(\frac{1}{p-1}+M+1 ) j\zeta_j.
\end{align*}
If $j\in (q_{k-1},m_k)$, then $(\zeta_{a_\infty})_j \leq
(\xi_{a_\infty})_j \leq M \xi_j = M \zeta_j$.
Thus $\zeta_{a_\infty}= O(\zeta ),$ which proves
the am-$\infty$ regularity of $\zeta$.
It remains to prove that both inclusions in $ (\eta) \subset (\zeta)
\subset (\xi)$ are proper.
By the Potter-type condition, $ \frac{q_k}{n_k} \leq
(C\frac{\xi_{n_k}}{\xi_{q_k}})^\frac{1}{p_o}$ and hence
\begin{align*}
\zeta_{m_k} &\leq (\frac{q_k}{m_k})^p \eta_{m_k}
\leq k^p (\frac{q_k}{n_k})^p \eta_{m_k}
\leq C^\frac{p}{p_o} k^p (\frac{\xi_{n_k}}{\xi_{q_k}})^\frac{p}{p_o} \eta_{m_k}
\leq C^\frac{p}{p_o}k^p(\frac{\xi_{n_k}}{\eta_{m_k}})^\frac{p}{p_o}
\eta_{m_k} \\
&=
C^\frac{p}{p_o}k^p(\frac{\eta_{m_k}}{\xi_{n_k}})^{(1-\frac{p}{p_o})}\xi_{n_k}
\leq C^\frac{p}{p_o}k^{p-R(1-\frac{p}{p_o})}\xi_{n_k}
= C^\frac {p}{p_o}  k^{-p} \xi_{n_k}.
\end{align*}
Thus $\xi_{n_k} \geq C^{-\frac{p}{p_o}}k^p(D_k \zeta)_{n_k}$
for all
$k \geq k_o$, and hence $\xi \neq O(D_m\zeta)$
for any $m$, i.e., $(\xi) \not\subset (\zeta)$. Finally, recalling that
$(\frac{q_k}{j})^p\eta_{m_k} \leq C\zeta_j$ for all $j\in [m_k,q_k]$ and that
$n_k \ge \frac{k}{2}m_k$ and hence $\frac{k^2}{2}m_{k^2} \le n_{k^2} \le q_{k^2}$ we
have

\[
\zeta_{km_{k^2}}
\geq \frac{1}{C}(\frac{q_{k^2}}{km_{k^2}})^p \eta_{m_{k^2}}
\geq \frac{1}{C}(\frac{n_{k^2}}{km_{k^2}})^p \eta_{m_{k^2}}
\geq \frac{1}{C}(\frac{k}{2})^p \eta_{m_{k^2}}
\]
and hence $\zeta \neq O(D_m\eta)$ for any m.
Thus $(\zeta) \not\subset (\eta)$,  completing the proof of
lower density.

Now we prove  the upper density of $\mathscr {S_{\infty}PL}$ in
$\mathscr {PL}$.
Let $(\eta)\supsetneq (\xi)$ and, as in the first part of the proof,
assume without loss of
generality that $\eta\geq \xi$ and additionally that $\eta_1 =\xi_1$.
By Lemma \ref {L: am infty under},
assume that $\xi_i > 0$ for all $i$ and by Lemma \ref {L: density of summable}
assume also that $\eta$ is summable.

We construct inductively two increasing sequences of indices
$n_{k-1}<q_k<n_k$ as follows. Choose $n_1=1$ and
assume the construction up to $k-1$. For all positive integers $n$
define  $q(n):=\max{\{ i\in \mathbb {N} \mid \xi_i \geq \eta_n}\}$;
then clearly $q(n)\uparrow \infty$. Using the fact that
$\eta \neq O(D_k\xi)$ and that $\eta$ is summable, choose $n_k$ so
that  $\eta_{n_k} \geq k^R(D_k\xi)_{n_k}$
and $\sum _{q(n_k)}^\infty \eta_i \leq \sum _{n_{k-1}}^\infty \xi_i$ where
$R:=\frac{2pp_o}{p_o-p}$. Set $q_k : = q(n_k)$ and
$m_k:= {\lceil\frac{n_k}k\rceil}$, the smallest integer majorizing
$\frac{n_k}k$.
  Because of the inequalities
$\xi_{q_k} \geq\eta_{n_k} \geq k^R \xi_{m_k} > \xi_{m_k}$ and the
summation condition, it follows
that $n_{k-1}<q_k <m_k < n_k$.
Next, define the sequence
\[
\zeta_i :=
\begin{cases}
\xi_i
        &\text{on $(n_{k-1},q_k]$}\\

\max\,(\xi_i, (\frac{q_k}{i})^p \eta_{n_k})         &\text{on $[q_k,n_k]$}\\

\end{cases}
\]
Since $\zeta_{n_k} \geq \xi_{n_k}  \geq \xi_{n_{k+1}} =
\zeta_{n_k+1}$ we see that
$\zeta$ is monotone nonincreasing. By definition, $\xi_i \leq
\zeta_i$ for all $i$,
$\zeta_i \leq \eta_i$ for all $i \in (n_{k-1},q_k]$, and $\zeta_i
\leq \eta_{n_k} \leq \eta_i$
for all $i\in [q_k,n_k]$. Thus $\xi \leq \zeta\leq\eta$ and hence
$(\xi) \subset (\zeta) \subset (\eta)$.
Now we prove that $\zeta$ is am-$\infty$-regular. If $j\in [q_k,n_k]$,  then
\begin{align*}
j(\zeta_{a_{\infty}})_j &= \sum _{j+1}^\infty \zeta_i \leq \sum
_{j+1}^{n_k}\xi_i + \sum _{j+1}^{n_k} (\frac{q_k}{i})^p \eta_{n_k}
+\sum _{n_k+1}^{q_{k+1}}\xi_i + \sum _{q_{k+1}}^\infty \eta_i\\
&\leq \sum _{j+1}^{q_{k+1}}\xi_i + \sum _{j+1}^{n_k}
(\frac{q_k}{i})^p \eta_{n_k} + \sum _{n_k}^\infty \xi_i
\leq 2\sum _{j+1}^\infty \xi_i + \frac{j}{p-1}(\frac{q_k}{j})^p\eta_{n_k} \\
&\leq 2Mj\xi_j + \frac{j}{p-1}(\frac{q_k}{j})^p\eta_{n_k}
\leq j(2M+\frac{1}{p-1})\zeta_j.
\end{align*}
If $j \in (n_{k-1},q_k)$, then by applying the inequality just
obtained and the Potter-type inequality,
\begin{align*}
j(\zeta_{a_{\infty}})_j &= \sum _{j+1}^{q_k} \xi_i + \sum
_{q_k+1}^{\infty} \zeta_i
\leq  \sum _{j+1}^\infty \xi_i + (2M+\frac{1}{p-1})q_k\xi_{q_k}
\leq Mj\xi_j + (2M+\frac{1}{p-1})Cq_k(\frac{j}{q_k})^p\xi_j\\
&\leq Mj\xi_j + (2M+\frac{1}{p-1})Cj\xi_j
\leq  C(3M+\frac{1}{p-1})j\zeta_j.
\end{align*}
Thus $\zeta_{a_\infty} = O(\zeta)$ and hence $\zeta$ is am-$\infty$ regular.
Now we prove that the ideal inclusions are proper.  By the
Potter-type inequality
$\frac{q_k+1}{m_k} \geq
(\frac{1}{C}\frac{\xi_{m_k}}{\xi_{q_k+1}})^{\frac{1}{p_o}}$, therefore
\begin{align*}
\zeta_{n_k} &\geq \left(\frac {q_k}{n_k}\right)^p\eta_{n_k}
\geq (2k)^{-p}\left(\frac{q_k+1}{m_k}\right)^p\eta_{n_k}
\geq (2k)^{-p}
\left(\frac{1}{C}\frac{\xi_{m_k}}{\xi_{q_k+1}}\right)^{\frac{p}{p_o}}\eta_{n_k}\\
&\geq (2k)^{-p}C^{-\frac{p}{p_o}}
\left(\frac{\xi_{m_k}}{\eta_{n_k}}\right)^{\frac{p}{p_o}}\eta_{n_k}
=(2k)^{-p}C^{-\frac{p}{p_o}}
\left(\frac{\eta_{n_k}}{\xi_{m_k}}\right)^{1-\frac{p}{p_o}} \xi_{m_k}
\geq 2^{-p}C^{-\frac{p}{p_o}} k^{-p+R(1-\frac{p}{p_o})} \xi_{m_k}\\
&=2^{-p}C^{-\frac{p}{p_o}} k^p \xi_{m_k}.
\end{align*}
Thus $\zeta \neq O(D_m\xi)$ for all m and hence $(\xi) \neq (\zeta)$. Finally,
$q_{k^2} < m_{k^2} <km_{k^2} < k^2m_{k^2}  \le n_{k^2}$ and hence, again by the
Potter-type condition,
\[
\zeta_{km_{k^2}} = \max\left(\xi_{km_{k^2}},
(\frac{q_{k^2}}{km_{k^2}})^p\eta_{n_{k^2}}\right)
\leq  \max\left(C(\frac{q_{k^2}+1}{km_{k^2}})^{p_o}\xi_{q_{k^2}+1},
k^{-p}\eta_{n_{k^2}}\right)
\leq Ck^{-p}\eta_{n_{k^2}}.
\]
Thus $\eta \neq O(D_m\zeta)$ for all m and so $(\eta)
\neq (\zeta)$, concluding the proof.
\end{proof}

Since $\mathscr {PL}$ is upper but not lower dense in $\mathscr {L}$ by
Corollaries \ref{C: count gen upper density} and \ref{C: gaps princ},
immediate consequences of
Theorems \ref{T: Delta1/2-PL strong density and strong gaps},
\ref{T: SPL density in PL}, and \ref{T: SinftyPL density} are:

\begin{corollary} \label{C:density in L}
\item[(i)] $\mathscr {PL}$, $\Delta_{1/2} \mathscr {PL}$, $\mathscr
{SPL}$, and $\mathscr {S_{\infty}PL}$
are upper dense in $\mathscr {L}$ but they all have lower gaps in
$\mathscr {L}$.
\item[(ii)] $\mathscr {PL}$, $\Delta_{1/2} \mathscr {PL}$, $\mathscr
{SPL}$, and $\mathscr {S_{\infty}PL}$
   have no gaps.
\end{corollary}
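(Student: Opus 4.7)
The plan is to observe that the statement is purely a bookkeeping consequence of the theorems already proved, together with one simple transitivity observation about upper density.

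First I would record the (routine) fact that upper density is transitive: if $\mathscr L_1 \subset \mathscr L_2 \subset \mathscr L_3$ with $\mathscr L_1$ upper dense in $\mathscr L_2$ and $\mathscr L_2$ upper dense in $\mathscr L_3$, then given $I \in \mathscr L_1$ and $J \in \mathscr L_3$ with $I \subsetneq J$, one first inserts some $L_2 \in \mathscr L_2$ with $I \subsetneq L_2 \subsetneq J$, then inserts $L_1 \in \mathscr L_1$ with $I \subsetneq L_1 \subsetneq L_2$, and $L_1 \subsetneq J$ follows. Applying this to each of the chains $\Delta_{1/2}\mathscr{PL} \subset \mathscr{PL} \subset \mathscr{L}$, $\mathscr{SPL} \subset \mathscr{PL} \subset \mathscr{L}$, and $\mathscr{S_\infty PL} \subset \mathscr{PL} \subset \mathscr{L}$, and combining Corollary \ref{C: count gen upper density}(ii) with Theorems \ref{T: Delta1/2-PL strong density and strong gaps}, \ref{T: SPL density in PL}, and \ref{T: SinftyPL density}, yields upper density of all four sublattices in $\mathscr L$.

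For the existence of lower gaps in $\mathscr L$, I would simply note that every ideal in each of the four lattices is principal, so Corollary \ref{C: gaps princ} directly produces a lower gap in $\mathscr L$ below any such ideal; in particular none of the four is lower dense in $\mathscr L$, which completes (i).

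For (ii), $\mathscr{PL}$ has no gaps by the final sentence of Corollary \ref{C: count gen upper density}. For the three sublattices $\mathscr L'\in\{\Delta_{1/2}\mathscr{PL},\mathscr{SPL},\mathscr{S_\infty PL}\}$, suppose $I \subsetneq J$ with $I,J\in\mathscr L'$. Since $\mathscr L'\subset\mathscr{PL}$, both $I$ and $J$ are principal, and by Theorems \ref{T: Delta1/2-PL strong density and strong gaps}, \ref{T: SPL density in PL}, or \ref{T: SinftyPL density} (as appropriate), $\mathscr L'$ is upper dense in $\mathscr{PL}$, so there is an intermediate ideal in $\mathscr L'$ strictly between $I$ and $J$. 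Thus $\mathscr L'$ has no gaps. The only potentially subtle point is checking that ``upper density in $\mathscr{PL}$'' is exactly what is needed here, since we are looking up from $I\in\mathscr L'$ at $J\in\mathscr L'\subset\mathscr{PL}$, which is precisely the hypothesis of upper density. Since the whole argument is essentially an assembly of the previous density theorems, I do not expect any technical obstacle.
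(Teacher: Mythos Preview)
Your proposal is correct and follows essentially the same approach as the paper, which simply records the corollary as an immediate consequence of Corollaries \ref{C: count gen upper density} and \ref{C: gaps princ} together with Theorems \ref{T: Delta1/2-PL strong density and strong gaps}, \ref{T: SPL density in PL}, and \ref{T: SinftyPL density}. Your explicit spelling-out of the transitivity of upper density is a helpful clarification but not a departure from the paper's reasoning.
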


Other consequences of the same theorems and the Potter-type
conditions already mentioned are:

\begin{corollary}\label{C:inclusions}
 \item[(i)] Every principal ideal is contained in a principal ideal
from $\mathscr {SPL}$
and if it is strictly larger than $F$ it contains a principal ideal
in $\mathscr {S_{\infty}PL}$ that is strictly
  larger than $F$.
\item[(ii)] A principal ideal $(\eta)$ contains a principal ideal in
$\Delta_{1/2} \mathscr {PL}$ if and only
  if $\omega^p = O(\eta )$ for some $p>0$; it contains a principal
ideal in $\mathscr {SPL}$  if and only if
$\omega^p = O(\eta )$ for some $0 < p < 1$; and it is contained in  a
principal ideal in $\mathscr
{S_{\infty}PL}$ if and only if $\eta = O(\omega^p)$ for some $ p > 1$.
\end{corollary}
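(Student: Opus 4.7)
The statement splits into two parts that I will handle separately.

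\emph{For part (i),} the second assertion is exactly Lemma \ref{L: am infty under}, to which I will simply appeal. For the first assertion, I will give a direct construction of a regular majorant. Given $\xi \in \co*$ generating $(\xi)$, fix any $p \in (0,1)$ and set
\[
\eta_n \;:=\; n^{-p}\max_{1\le m\le n} m^p\,\xi_m .
\]
By construction, $n \mapsto n^p \eta_n$ is nondecreasing, which is precisely the Potter-type inequality $\eta_n \ge (m/n)^p \eta_m$ for all $n \ge m$; since $0<p<1$, \cite[Theorem 3.10]{DFWW} then identifies $\eta$ as regular, so that $(\eta) \in \mathscr{SPL}$. The remaining verifications are routine: $\eta_n \ge n^{-p}\cdot n^p \xi_n = \xi_n$ gives $(\xi) \subset (\eta)$; monotonicity $\eta_{n+1} \le \eta_n$ follows by case analysis (if the running maximum is attained at $n{+}1$ then $\eta_{n+1}=\xi_{n+1}\le\xi_n\le\eta_n$, and otherwise $\eta_{n+1}=(n/(n+1))^p\eta_n\le\eta_n$); and $\eta_n \to 0$ by splitting $\max_{m\le n}\xi_m(m/n)^p$ at $\lceil\sqrt{n}\,\rceil$, bounding the contribution from $m\le\sqrt{n}$ by $\xi_1 n^{-p/2}$ and the contribution from $m>\sqrt{n}$ by $\xi_{\lceil\sqrt{n}\,\rceil}$.

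\emph{For part (ii),} all three equivalences follow the same template, combining the Potter-type characterization of each class with the observation that $\omega^p$ itself already lies in the relevant sublattice: $\omega^p$ satisfies the \D* for every $p>0$; is regular for $0<p<1$ since $(\omega^p)_a \asymp \omega^p/(1-p)$; and is $\infty$-regular for $p>1$ since $(\omega^p)_{a_\infty} \asymp \omega^p/(p-1)$. For each ``if'' direction, $\omega^p = O(\eta)$ (respectively $\eta = O(\omega^p)$) immediately yields the desired principal subideal (respectively superideal) of $(\eta)$. For each ``only if'' direction, suppose $(\xi) \subset (\eta)$ (respectively $(\eta) \subset (\xi)$) with $\xi$ in the relevant class. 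I invoke the Potter-type inequality: \cite[Section 2.4 (22)]{DFWW} (or \cite[Corollary 4.15(i)]{vKgW02}) for \D* with integer $p$; \cite[Theorem 3.10]{DFWW} giving $\xi_n \ge C(m/n)^p\xi_m$ with $0<p<1$ for regular; and \cite[Theorem 4.12]{vKgW04-Traces} giving the reverse inequality $\xi_n \le C(m/n)^p\xi_m$ with $p>1$ for $\infty$-regular. Taking $m=1$ yields $\omega^p = O(\xi)$ pointwise in the first two cases and $\xi = O(\omega^p)$ pointwise in the third; combining with $\xi = O(D_k\eta)$ (respectively $\eta = O(D_k\xi)$) and absorbing the ampliation $D_k$ via the \D* condition on $\omega^p$ then gives the stated pointwise domination.

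The only nontrivial ingredient is the explicit construction in (i): it is the running-maximum formula that simultaneously produces a sequence in $\co*$, dominates $\xi$, and satisfies the Potter condition, all at once. Once this is in hand, everything else reduces to systematic bookkeeping with the Potter-type inequalities already assembled earlier in the paper.
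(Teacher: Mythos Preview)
Your proof is correct. Part (ii) and the second assertion of part (i) match the paper's argument essentially verbatim (Potter-type inequalities at $m=1$, the elementary regularity facts for $\omega^p$, and the appeal to Lemma~\ref{L: am infty under}). The first assertion of part (i), however, is handled by a genuinely different construction: you build the ``Potter envelope'' $\eta_n = n^{-p}\max_{m\le n} m^p\xi_m$, forcing the Potter inequality with constant $C=1$ and exponent $p\in(0,1)$ by fiat, whereas the paper instead normalizes $\xi_1=1$ and takes the $c_{\mathrm o}$-norm convergent series $\zeta = \sum_{m\ge 1} 2^{-m}\xi_{a^m}$, then checks directly that $\zeta_a \le 2\zeta$. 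Your approach is arguably more elementary (no completeness argument, and one can prescribe the Potter exponent); the paper's series construction, on the other hand, is what generalizes cleanly to the Banach-ideal setting in Proposition~\ref{P:complete}(ii), where one replaces $2^{-m}$ by $2^{-m}/\|\xi_{a^m}\|$ and uses boundedness of $\eta\mapsto\eta_a$ on $\Sigma(I)$.
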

\begin{proof}
\item[(i)] Assume without loss of generality that the principal ideal $I$ has generator $\eta$ with $\eta_1=1$ and
hence $||\eta_{a^m}|| = 1$  for the $c_o$-norm for all $m \in \mathbb
N$.
Thus the series $\sum_{m=1}^{\infty}2^{-m} \eta_{a^m} := \zeta$
converges in the $c_o$-norm, hence weakly, and thus $\zeta \in
\co*$ and $(\eta) \subset (\zeta)$. By definition,
\[
(\zeta_a)_n= \frac{1}{n}\sum_{j=1}^{n}\sum_{m=1}^{\infty} 2^{-m}(\eta_{a^m})_j
=\sum_{m=1}^{\infty}2^{-m} \frac{1}{n}\sum_{j=1}^{n}(\eta_{a^m})_j
=\sum_{m=1}^{\infty} 2^{-m}(\eta_{a^{m+1}})_n
\leq 2\zeta_n
\]
and hence $\zeta$ is regular.

When $I\neq F$, by Lemma \ref {L: am infty under} there is an ideal $
J \in \mathscr {S_{\infty}PL}$ with
$F\subsetneq J \subsetneq I $.
\item[(ii)] It is immediate to verify that the sequence $\omega^p$
satisfies the $\Delta_{1/2}$-condition
for every $p >0$, is regular if and only if $0<p<1$ and is
$\infty$-regular if and only if $p > 1$.
This establishes the sufficiency in (ii).
The necessity is a direct consequence of the Potter-type conditions
(in all cases taking $m=1$) that we have already quoted in this
section.
\end{proof}

\noindent By the lack of gaps in $\mathscr
{PL}$ and by Theorems \ref{T: Delta1/2-PL strong density and strong gaps},
\ref{T: SPL density in PL} and \ref{T: SinftyPL density}, the ideals in
(i) and (ii)  are of course never unique.\\

If $L$ is an am-stable ideal, $I$ is principal, and $I \subset L$, then while Corollary \ref{C:inclusions}(i) ensures that there are principal am-stable ideals $J \supset I$, it may be impossible to find any such $J$ also contained in $L$, as the following example shows. The same holds in the  am-$\infty$ case. \\

Recall from Section \ref{S:2} that the am (resp., am-$\infty$) stabilizer $st^a(I)$ (resp., $st_{a_\infty}(I)$) is the smallest
am-stable ideal containing $I$ (resp., the largest am-$\infty$ stable ideal contained in $I$).

\begin{example}\label{E:X}
\item[(i)]  Let $\xi$ be an irregular sequence. Then $L=st^a((\xi))$
is am-stable but there is no intermediate
am-stable principal ideal $(\xi) \subset J \subset L$.
\item [(ii)] $st_{a_{\infty}}((\omega))$ is am-${\infty}$ stable but
it contains principal ideals
  that are not contained in any am-${\infty}$ stable principal ideal.
\end{example}
\begin{proof}
\item[(i)] Assume by contradiction that there is an am-stable
principal ideal $J= (\eta)$ with $(\xi) \subset J \subset L$.
Since $J \supset (\xi)$ and $J$ is am-stable, then $J \supset L$,
hence $J=L = \bigcup_n(\xi_{a^n})$.Then $\eta \in
\Sigma((\xi_{a^n}))$ for some $n\in \mathbb N$.
On the other hand, $(\xi_{a^n})\subset (\eta)$, hence $\xi_{a^n}\asymp \eta$ is
regular, and thus $\xi$ too is regular
(see  Theorem \ref{T: I am-stable iff} and the paragraph preceding
it), against the hypothesis.
\item[(ii)] Choose an increasing sequence of indices $n_k$ so that
$\sum_{n_k}^\infty \frac{log^k n}{n^{1+\frac{1}{k}}} \leq \frac{1}{k^2}$ and
define $\xi_n := \frac{1}{n^{1+\frac{1}{k}}}$ for $n\in[n_k,\,n_{k+1})$.
Then for every $p\in \mathbb N$,
\[
\sum_{n=n_p}^{\infty}\xi_n \log^pn
\le \sum_{k=p}^{\infty}\sum_{n=n_k}^{n_{k+1}-1}\xi_n \log^kn
\le  \sum_{k=p}^{\infty}\frac{1}{k^2} < \infty
\]
and thus $\sum_{n=1}^{\infty}\xi_n \log^pn < \infty$. 
By \cite[Proposition 4.18(ii)]{vKgW04-Traces} $(\xi) \subset st_{a_\infty}((\omega))$.
Furthermore, $\xi \neq O(\omega^p)$ for every $p >1$, so by Corollary \ref{C:inclusions}(ii), there is no $J \in \mathscr {S_{\infty}PL}$ contained in $I$.\linebreak
\end{proof}

\noindent To complete our analysis of the lattices $\Delta_{1/2}\mathscr {PL}$, $\mathscr {SPL}$, and $\mathscr {S_{\infty}PL}$ we consider also their 
complements in $\mathscr {PL}$, i.e., the principal ideals that do not satisfy the $\Delta_{1/2}$ condition, (resp., are not am-stable, or are not am-$\infty$ stable.)

\begin{proposition} \label{P:lower irregular}
Every nested pair of nonzero principal ideals has strictly between them an ideal from 
$\mathscr {PL} \setminus \Delta_{1/2} \mathscr{PL}$, $\mathscr {PL} \setminus \mathscr {SPL}$, and $\mathscr {PL} \setminus \mathscr {S_{\infty}PL}$.
\end{proposition}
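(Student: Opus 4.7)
The plan is to exhibit, for any nested $(\xi) \subsetneq (\eta)$ in $\mathscr{PL}$, strictly intermediate principal ideals in each of the three complementary classes. The constructions will adapt the piecewise staircases used in the proofs of Theorems \ref{T: Delta1/2-PL strong density and strong gaps}, \ref{T: SPL density in PL}, and \ref{T: SinftyPL density} but replace their tempered coefficients (chosen to keep the intermediate sequence $\Delta_{1/2}$, regular, or $\infty$-regular) by much more rapidly decreasing ones such as $1/j!$, so that the intermediate $\tau$ fails the corresponding conditions robustly. A preliminary observation is that $\mathscr{SPL} \subset \Delta_{1/2}\mathscr{PL}$: a regular generator satisfies the Potter-type inequality $\xi_n \geq C(m/n)^{p_o}\xi_m$ with $0<p_o<1$ (see proof of Theorem \ref{T: SPL density in PL}), which at $m=n/2$ specializes to $\Delta_{1/2}$. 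Hence any intermediate ideal in $\mathscr{PL}\setminus\Delta_{1/2}\mathscr{PL}$ automatically lies in $\mathscr{PL}\setminus\mathscr{SPL}$, and it suffices to construct intermediate ideals in $\mathscr{PL}\setminus\Delta_{1/2}\mathscr{PL}$ and in $\mathscr{PL}\setminus\mathscr{S_\infty PL}$.

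For $\mathscr{PL}\setminus\Delta_{1/2}\mathscr{PL}$: Corollary \ref{C:inclusions}(ii) implies that if $\omega^p \neq O(\eta)$ for every $p>0$, then every principal subideal of $(\eta)$ lies in $\mathscr{PL}\setminus\Delta_{1/2}\mathscr{PL}$, and any intermediate ideal supplied by the no-gap property in $\mathscr{PL}$ (Corollary \ref{C: count gen upper density}(ii)) suffices. In the complementary case $\omega^p = O(\eta)$ for some $p$, combining Corollary \ref{C:inclusions}(ii) with Theorem \ref{T: Delta1/2-PL strong density and strong gaps} and Corollary \ref{C: count gen upper density}(ii) produces a nested chain $(\xi) \subsetneq (\rho_1) \subsetneq (\rho_2) \subsetneq (\eta)$ with $\rho_2$ a $\Delta_{1/2}$ generator and $\rho_1 \leq \rho_2$ (after rescaling). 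Then I would apply the lower-density construction of Theorem \ref{T: Delta1/2-PL strong density and strong gaps} with $\rho_2, \rho_1$ playing the roles of $\xi, \eta$, but with the staircase coefficient $1/j$ on the $j$-th piece $(2^{j-1}n_k, 2^j n_k]$ replaced by $1/j!$. Choosing break indices $n_k$ with $(\rho_2)_{n_k} \geq k!\,M^k (\rho_1)_{n_k}$ (feasible because $\rho_2 \neq O(\rho_1)$), monotonicity and the sandwich $\rho_1 \leq \tau \leq \rho_2$ go through essentially as in the original proof, and by construction $\tau_{n_k}/\tau_{2^k n_k} \geq k!$.

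For $\mathscr{PL}\setminus\mathscr{S_\infty PL}$: since $\infty$-regularity requires summability and summability is preserved under the $D_m$-equivalence, if $\eta$ is non-summable then an intermediate principal ideal with a non-summable generator --- e.g., $\tau = \xi + \delta$ for a non-summable $\delta \in \Sigma((\eta))$, with strictness arranged by a further application of the no-gap method --- automatically lies in $\mathscr{PL}\setminus\mathscr{S_\infty PL}$. If $\eta$ is summable, I would run the analogous factorial modification of the construction in Theorem \ref{T: SinftyPL density}, using the $\infty$-Potter inequality from \cite[Theorem 4.12]{vKgW04-Traces} in place of the am-Potter inequality, to produce a summable $\tau$ with $(\tau_{a_\infty})_{n_k}/\tau_{n_k}$ growing factorially along a sparse subsequence.

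The main obstacle --- the robustness of the failure across ideal-equivalent generators --- is addressed by a Potter-type trade-off. A hypothetical $\Delta_{1/2}$ generator $\tau'$ of $(\tau)$ with $\tau' \leq CD_{m_1}\tau$ and $\tau \leq CD_{m_2}\tau'$ would satisfy, by iterating $\tau'_N/\tau'_{2N} \leq M$ through the equivalences, a bound $\tau_{n_k}/\tau_{2^k n_k} \leq C'M^k$ with $C', M$ independent of $k$, which Stirling's estimate contradicts against the factorial lower bound $k!$ for large $k$. An entirely analogous estimate using the $\infty$-Potter inequality defeats any $\infty$-regular equivalent generator in the am-$\infty$ case.
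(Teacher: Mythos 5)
Your reduction $\mathscr{SPL}\subset\Delta_{1/2}\mathscr{PL}$ and your first case ($\omega^p\neq O(\eta)$ for every $p>0$, settled by Corollary \ref{C:inclusions}(ii) plus the absence of gaps in $\mathscr{PL}$) are correct, but the complementary case has a genuine gap: the chain $(\xi)\subsetneq(\rho_1)\subsetneq(\rho_2)\subsetneq(\eta)$ with $\rho_2$ a $\Delta_{1/2}$ sequence cannot be extracted from the results you cite, and in fact need not exist. Corollary \ref{C:inclusions}(ii) only produces some $\Delta_{1/2}$ ideal inside $(\eta)$ (e.g.\ $(\omega^p)$), not one containing $(\xi)$, and Theorem \ref{T: Delta1/2-PL strong density and strong gaps} applies only when one endpoint already lies in $\Delta_{1/2}\mathscr{PL}$. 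Concretely, let $m_1=2$, $m_{k+1}=m_k^2$, and on each block $(m_k,m_{k+1}]$ set $\eta_i:=m_k^{-1/2}$ and $\xi_i:=\frac{1}{k}m_k^{-1/2}$ (both $:=1$ for $i\le 2$). Then $\omega^{1/2}\le\eta$, so you are in your second case; $\xi\le\eta$; and $(\xi)\subsetneq(\eta)$ since $\eta_i/(D_t\xi)_i=k$ at $i=tm_k+1$ for all large $k$. Yet if $\rho$ satisfied $\rho_n\le M\rho_{2n}$ and $(\xi)\subset(\rho)\subset(\eta)$, then $\xi\le c_1\rho$ (because $D_m\rho\asymp\rho$) and $\rho\le c_2D_t\eta$ for some $t$, so with $2^j\ge 2t$ one gets $c_2m_k^{-1}=c_2\eta_{2m_{k+1}}\ge\rho_{2tm_{k+1}}\ge M^{-j}\rho_{m_{k+1}}\ge M^{-j}c_1^{-1}\frac{1}{k}\,m_k^{-1/2}$, i.e.\ $m_k^{1/2}\le c_1c_2M^{j}k$ for all large $k$, which is impossible. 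Thus no $\Delta_{1/2}$ (hence no regular) scaffold exists between $(\xi)$ and $(\eta)$, and your construction never gets started. The same obstruction undermines the am-$\infty$ branch: the ``analogous factorial modification of Theorem \ref{T: SinftyPL density}'' relies on the Potter-type inequality of an $\infty$-regular endpoint, which neither $\xi$ nor $\eta$ supplies and which cannot in general be interposed. Also, in your non-summable subcase, ``strictness arranged by a further application of the no-gap method'' is not justified: $(\xi+\delta)$ can equal $(\eta)$, since an ideal can be the sum of two proper principal subideals (cf.\ the proof of Corollary \ref{C: gaps are never unique}), and after interposing a strictly smaller pair the top ideal may have become summable.

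What is needed, and what the paper's proof does, is a construction using no scaffold from the special lattice at all: after interposing $(\xi)\subsetneq(\xi')\subsetneq(\eta')\subsetneq(\eta)$ so that strictness comes for free, one builds $\zeta$ with $\xi'\le\zeta\le\eta'$ directly from $\eta'\neq O(D_m\xi')$. For the $\Delta_{1/2}$ (hence $\mathscr{SPL}$) case, taking $\zeta_j:=\min(\xi'_{n_k},\eta'_j)$ on $(n_k,n_{k+1}]$, where $\eta'_{n_k}\ge k\xi'_{n_k}$ and $\eta'_{n_k}\le\xi'_{n_{k-1}}$, yields drops $\zeta_{n_k}\ge k\zeta_{n_k+1}$, which by the trade-off you describe rules out every $\Delta_{1/2}$ generator of $(\zeta)$ (note that unbounded ratios already suffice; factorial growth buys nothing). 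For the $\mathscr{S_\infty PL}$ case one instead flattens $\zeta$ on long stretches so that $\zeta_{a_\infty}\neq O(\zeta)$, which by \cite[Theorem 4.12]{vKgW04-Traces} excludes am-$\infty$ stability of $(\zeta)$ with respect to every generator, with no summability discussion or Potter inequality needed.
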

\begin{proof}

Let $I \subsetneq J$ be principal ideals.
By the lack of gaps in $\mathscr {PL}$, we can find principal ideals
$I'$ and $J'$ such that
$I \subsetneq I' \subsetneq J' \subsetneq J$,
so for all three cases it is enough to find principal ideals  $I
\subset L\subset J$ outside the three special lattices considered
  without having to prove that they do not coincide with $I$ or $J$. Notice also
that since the arithmetic mean of a $\co*$ sequence always satisfies
the $\Delta_{1/2}$-condition,
  $\mathscr {SPL} \subset \Delta_{1/2} \mathscr {PL}$, and hence the
second case follows from the first. Let $I=(\xi)$ and $J=(\eta)$
and by passing if necessary to other generators, assume without loss
of generality that $\xi \leq \eta$ and $\xi_1=\eta_1$.

First we prove that there is an $I \subset L\subset J$ with $L\in
\mathscr {PL} \setminus \Delta_{1/2} \mathscr {PL}$.
Since $\eta \neq O(\xi)$, there is an increasing sequence of positive
integers  $n_k$ for which
$\eta_{n_k} \geq k\xi_{n_k}$ and $\eta_{n_k} \leq \xi_{n_{k-1}}$. \linebreak
Define
$\zeta_j := \min (\xi_{n_k}, \eta_j)$ for $j\in (n_k, n_{k+1}]$. Then
$\zeta \in \co*$,
  $\xi \leq \zeta \leq \eta$, and one has \linebreak
$\zeta_{n_k} = \eta_{n_k}\geq k\xi_{n_k} \geq k\zeta_{n_k+1} \geq
k\zeta_{2n_k}$. Thus
$L=(\zeta) \not\in \Delta_{1/2} \mathscr {PL}$ and $I\subset L\subset J$.

Now we prove that there is an $I \subset L\subset J$ with $L\in
\mathscr {PL} \setminus \mathscr {S_{\infty}PL}$.
Since $\eta \neq O(D_m\xi)$ for any $m \in \mathbb{N}$, there is an
increasing sequence of positive integers $n_k$, starting with
$n_1=1$,
such that $\eta_{n_k} \geq k^2(D_k\xi)_{n_k} = k^2\xi_{\lceil
\frac{n_k}{k} \rceil}$,
$\eta_{n_k} \leq \frac{1}{k}\eta_{n_{k-1}}$ and $n_k > kn_{k-1}$.
Define
$\zeta_i := \min (\frac{1}{k}\eta_{n_k}, \eta_i)$ for
$ i\in [\lceil \frac{n_k}{k} \rceil,\lceil \frac{n_{k+1}}{k+1} \rceil)$.
Then  $\zeta \in \co*$,
$\zeta \leq \eta$, and since for all
$ i\in [\lceil \frac{n_k}{k} \rceil,\lceil \frac{n_{k+1}}{k+1} \rceil)$,
$\frac{1}{k}\eta_{n_k} \geq k\xi_{\lceil \frac{n_k}{k} \rceil}\geq\xi_i$ and
$\eta_i \geq \xi_i$, it follows  that $\zeta \geq \xi$ and hence
$I\subset L\subset J$. Let
$q_k := \max\{i\in \mathbb N\,|\,\eta_i \geq \frac{1}{k}\eta_{n_k}\}$.
Then $q_k \geq n_k >k(\lceil \frac{n_k}{k} \rceil-1)$ and
\[
(\zeta_{a_{\infty}})_{\lceil \frac{n_k}{k} \rceil}
\geq \frac{1}{\lceil \frac{n_k}{k} \rceil}\sum_{\lceil \frac{n_k}{k}
\rceil+1}^{q_k} \frac{1}{k}\eta_{n_k}
= \frac{q_k-\lceil \frac{n_k}{k} \rceil}{\lceil \frac{n_k}{k}
\rceil}\zeta_{\lceil \frac{n_k}{k} \rceil}
\geq (k-2)\zeta_{\lceil \frac{n_k}{k} \rceil}
\]
Thus $\zeta_{a_{\infty}} \neq O(\zeta)$ and hence $L=(\zeta) \not\in
\mathscr {S_{\infty}PL}$.
\end{proof}

\noindent From this we see that above and below every nonzero
principal ideal (above only for $F$) lies a
  ``bad" ideal. Indeed, by taking $I=F$ in Proposition \ref {P:lower irregular},
we see that every principal ideal $J \neq F$ contains (properly)
principal ideals distinct from $F$ not
in the lattices $\Delta_{1/2} \mathscr {PL}$,  $\mathscr {SPL}$, and
$\mathscr {S_{\infty}PL}$. Similarly,
every principal ideal is properly contained in another principal
ideal, and hence in principal
  ideals not in the lattices $\Delta_{1/2} \mathscr {PL}$, $\mathscr
{SPL}$, and $\mathscr {S_{\infty}PL}$.

\section{\leftline {Unions of principal ideals} }\label{S:4}

Basic questions on intersections and unions of ideals from
a certain lattice play a natural role in the subject.
In \cite{nS74} Salinas investigated intersections and unions of
ideals related to various classes of mainly Banach ideals.
In this section, first we use Corollary \ref{C:inclusions} and
density Theorems \ref{T: Delta1/2-PL strong density and strong gaps},
\ref{T: SPL density in PL} and \ref{T: SinftyPL density}
to determine intersections and unions of various classes of principal ideals.
Then we investigate questions on representing ideals as unions of
chains of principal or countably generated ideals.\\

An immediate consequence of Corollary \ref{C:inclusions}(ii) is:

\begin{corollary}\label{L:intersections} \quad\\
(i) $\underset{p >0} \bigcap\,(\omega^p) = \underset{L \in \Delta_{1/2} \mathscr {PL}}\bigcap L$ 
\qquad \qquad (ii) $\underset{0 < p < 1} \bigcap\,(\omega^p) = \underset{L \in \mathscr {SPL}}\bigcap L$ 
\qquad \qquad (iii) $\underset{p > 1} \bigcup\,(\omega^p) =  \underset{L \in \mathscr {S_{\infty}PL}}\bigcup L$
\end{corollary}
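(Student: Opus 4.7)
The plan is to derive all three equalities directly from Corollary \ref{C:inclusions}(ii), which is set up precisely so that each principal ideal in $\Delta_{1/2}\mathscr{PL}$, $\mathscr{SPL}$, or $\mathscr{S_\infty PL}$ can be sandwiched against a suitable power $\omega^p$. The strategy for each of (i)--(iii) is identical: prove the ``easy'' inclusion by noting that $\omega^p$ itself belongs to the relevant lattice for the indicated range of $p$, and prove the reverse inclusion by applying Corollary \ref{C:inclusions}(ii) to each $L$ against itself.

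For (i) I would first note that $\omega^p$ satisfies the $\Delta_{1/2}$-condition for every $p > 0$ (as recorded in the proof of Corollary \ref{C:inclusions}(ii)), so $(\omega^p) \in \Delta_{1/2}\mathscr{PL}$; hence $\bigcap_{L \in \Delta_{1/2}\mathscr{PL}} L \subset \bigcap_{p > 0}(\omega^p)$. For the reverse, given any $L = (\eta) \in \Delta_{1/2}\mathscr{PL}$, the self-containment $L \subset L$ lets the first clause of Corollary \ref{C:inclusions}(ii) supply a $p > 0$ with $\omega^p = O(\eta)$; by hereditariness of $\Sigma(L)$ this yields $(\omega^p) \subset L$, and intersecting over $L$ gives $\bigcap_{p > 0}(\omega^p) \subset \bigcap_{L \in \Delta_{1/2}\mathscr{PL}} L$.

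Part (ii) is handled by the same argument verbatim, with $p$ restricted to $(0,1)$, the regularity of $\omega^p$ on that range replacing the $\Delta_{1/2}$-condition, and the second clause of Corollary \ref{C:inclusions}(ii) in place of the first. For (iii), $\omega^p$ is $\infty$-regular for $p > 1$, so $(\omega^p) \in \mathscr{S_\infty PL}$ and $\bigcup_{p>1}(\omega^p) \subset \bigcup_{L \in \mathscr{S_\infty PL}} L$; conversely, any $L = (\eta) \in \mathscr{S_\infty PL}$ is contained in itself, so the third clause of Corollary \ref{C:inclusions}(ii) yields $p > 1$ with $\eta = O(\omega^p)$, whence $L \subset (\omega^p) \subset \bigcup_{p>1}(\omega^p)$, and the union closes.

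There is no substantive obstacle; the argument is pure bookkeeping on top of Corollary \ref{C:inclusions}(ii). The one subtlety worth flagging is that in (i) and (ii) the conclusion $\omega^p = O(\eta)$ passes to the ideal inclusion $(\omega^p) \subset (\eta)$ without any ampliation $D_m$, because $\omega^p$ satisfies the $\Delta_{1/2}$-condition; symmetrically, in (iii) the conclusion $\eta = O(\omega^p)$ gives $(\eta) \subset (\omega^p)$ directly. This absence of $D_m$-factors is exactly what makes the two sides of each equality line up cleanly.
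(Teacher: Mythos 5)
Your argument is correct and is essentially the paper's own derivation: the paper states this corollary as an immediate consequence of Corollary \ref{C:inclusions}(ii), with exactly the two inclusions you give in each part (membership of $(\omega^p)$ in the relevant lattice for one direction, and the ``only if''/``if'' clauses of Corollary \ref{C:inclusions}(ii) applied to $L$ itself for the other). One minor correction to your flagged subtlety: the step from $\omega^p=O(\eta)$ to $(\omega^p)\subset(\eta)$ requires no $\Delta_{1/2}$-condition at all, since $O(\eta)$-domination already places $\omega^p$ in $\Sigma((\eta))$; the $\Delta_{1/2}$-condition would only matter for the reverse implication $(\omega^p)\subset(\eta)\Rightarrow\omega^p=O(\eta)$, which you never need.
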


\pagebreak

\begin{remark}\label{R: inters}
\item (i) By Corollaries \ref{C: count gen and inters}(ii) and \ref{C:inclusions}(i) and Theorems \ref{T: Delta1/2-PL strong density and strong gaps},
\ref{T: SPL density in PL}, and \ref{T: SinftyPL density}, every ideal in the lattices $\mathscr {PL}$, $\Delta_{1/2} \mathscr {PL}$,
$\mathscr {SPL}$, and $\mathscr {S_{\infty}PL}$ is the intersection of all ideals in the lattice that properly contain it. 
(In contrast, a principal ideal is never the union of any chain of ideals properly contained in it.)
As a consequence, every ideal in one of these lattices is the intersection of an infinite maximal chain of ideals that properly contain it and belong to the same lattice.
By \cite[Propositions 4.6 and 5.3]{vKgW04-Soft}, the chain must be
uncountable because principal ideals are ssc, while intersections of countable chains of principal ideals are never ssc.
More generally, the latter clause shows that no maximal Banach ideal 
(which are the symmetric normed ideals $\mathfrak S_\phi$ for a symmetric norming function $\phi$), 
which is ssc by \cite[Proposition 4.7]{vKgW04-Soft}, can be the proper intersection of a countable chain of principal ideals.
This provides an additional special case answer to Salinas' question in \cite[Section 7: ($\alpha_1$)]{nS74} asking whether 
$\mathfrak S_\phi \ne \bigcap I_k$ if $\mathfrak S_\phi \subsetneq I_k \subsetneq I_{k-1}$ for all $k$ (with $I_k$ not necessarily principal).
\item (ii) By \cite[Lemmas 2.1 and 2.2 and Section 3]{vKgW04-Soft},
arbitrary intersections
or directed unions of am-stable ideals (resp., am-$\infty$ stable ideals)
are am-stable (resp., am-$\infty$ stable).
\end{remark}
In the case of Banach ideals or their powers we can obtain more.
The proof used in Corollary \ref{C:inclusions}(i) to construct a
regular sequence $\zeta$
  majorizing a given sequence $\eta$ depends on the completeness of
the $\co*$-norm;
a similar proof can be employed for constructing directly a sequence
$\zeta$ that majorizes $\xi$ and satisfies the
$\Delta_{1/2}$-condition. Both constructions can be extended to
Banach ideals and the
latter even to powers of Banach ideals (i.e., $e$-complete ideals as by
  \cite[Sections 2.9-2.14 and Theorem 3.6]{DFWW}).

\begin{proposition}\label{P:complete}
\item(i) Let $I=J^p$ for some $p>0$ and some Banach ideal $J$.
Then $I = \bigcup \{L \in \Delta_{1/2}\mathscr {PL} \mid L \subset I\}$.

\item(ii) If $I$ is a Banach ideal, then $I$ is am-stable if and only if
$I = \bigcup \{L \in \mathscr {SPL} \mid L \subset I\}$.

\item(iii) If $I$ is a Banach ideal, 
then $I$ is am-$\infty$ stable if and only if $I = \bigcup \{L \in \mathscr {S_{\infty}PL} \mid L \subset I\}$.
\end{proposition}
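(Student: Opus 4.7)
The plan is to treat the three parts via a common template. For each $\xi \in \Sigma(I)$, I construct a monotone sequence $\zeta \ge \xi$ lying in $\Sigma(I)$ whose principal ideal $(\zeta)$ belongs to the relevant sublattice ($\Delta_{1/2}\mathscr{PL}$, $\mathscr{SPL}$, or $\mathscr{S_{\infty}PL}$); this gives the nontrivial inclusion in each equality, since the reverse inclusion $\bigcup \{L \subset I\} \subset I$ is automatic. The converse direction needed for (ii) and (iii) is immediate from Remark \ref{R: inters}(ii): a directed union of am-stable (resp.\ am-$\infty$ stable) ideals is am-stable (resp.\ am-$\infty$ stable).

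For (ii), since $I$ is am-stable, the assignment $\eta \mapsto \eta_a$ is a well-defined self-map of $\Sigma(I)$; transferred to diagonal operators its graph is closed because the Banach norm $\|\cdot\|_I$ dominates the operator norm, hence controls pointwise convergence of singular values, under which arithmetic means are pointwise continuous. The closed graph theorem produces $K \ge 1$ with $\|\eta_a\|_I \le K\|\eta\|_I$, and by iteration $\|\xi_{a^m}\|_I \le K^m\|\xi\|_I$. Setting $\zeta := \sum_{m=0}^{\infty}(2K)^{-m}\xi_{a^m}$, completeness of $I$ gives $\zeta \in \Sigma(I)$; the $m=0$ term gives $\zeta \ge \xi$, and the telescoping identity
\[
\zeta_a \;=\; \sum_{m=0}^{\infty}(2K)^{-m}\xi_{a^{m+1}} \;=\; 2K\sum_{m=0}^{\infty}(2K)^{-(m+1)}\xi_{a^{m+1}} \;\le\; 2K\,\zeta
\]
shows that $\zeta$ is regular, so $(\zeta) \in \mathscr{SPL}$ and sits between $(\xi)$ and $I$.

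Part (iii) follows the identical template with $\eta_{a_\infty}$ in place of $\eta_a$. The am-$\infty$ stability of $I$ forces $I \subset \mathscr L_1$, and the inclusion $I \hookrightarrow \mathscr L_1$ is automatically bounded (closed graph, or standard symmetric-norm comparison), so $I$-convergence implies $\ell^1$-convergence of singular values, which gives continuity of the tail averages $\eta_{a_\infty}$ and thus closedness of the self-map on $I$. The resulting bound $\|\xi_{a_\infty^m}\|_I \le K^m\|\xi\|_I$ supports $\zeta := \sum (2K)^{-m}\xi_{a_\infty^m}$, which is $\infty$-regular by the analogous telescoping. For (i), the ampliations $D_{2^m}$ replace the arithmetic means: every ideal is closed under $D_2$, and $I = J^p$ is $e$-complete (quasi-Banach) by \cite[Sections 2.9--2.14]{DFWW}, so closed graph still applies, giving $K$ with $\|D_{2^m}\xi\|_I \le K^m\|\xi\|_I$. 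For $\alpha > K$ sufficiently large relative to the quasi-norm distortion, $\zeta := \sum \alpha^{-m}D_{2^m}\xi$ converges in $\Sigma(I)$, majorizes $\xi$, and satisfies $D_2\zeta \le \alpha\zeta$, i.e., satisfies the $\Delta_{1/2}$-condition.

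The main obstacle is the convergence bookkeeping in part (i), where the quasi-norm on $J^p$ satisfies only a $p$-triangle inequality rather than the ordinary one, so $\alpha$ must dominate both the ampliation constant $K$ and the distortion coming from the quasi-norm when summing a geometric series. Parts (ii) and (iii) are clean Banach-space arguments, reduced to closed graph plus a single telescoping identity, and all remaining verifications (monotonicity of $\zeta$, convergence of the defining series, and the algebraic identity $\zeta_a \le c\zeta$ or its variants) are routine once the norm bounds on the iterated operator are in place.
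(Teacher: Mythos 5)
Your construction is essentially the paper's. The paper also obtains the nontrivial inclusion by producing, for each $\xi \in \Sigma(I)$, a majorant in $\Sigma(I)$ given by a norm-convergent series of ampliations $D_m\xi$ (for (i)) or of iterated means $\xi_{a^m}$, $\xi_{a_\infty^m}$ (for (ii), (iii)), and it gets the sufficiency in (ii), (iii) exactly as you do, from directedness (finite sums of am-stable, resp.\ am-$\infty$ stable, principal ideals stay in the sublattice) together with Remark \ref{R: inters}(ii). The only substantive differences: the paper normalizes each summand by its own norm ($\sum_m \frac{1}{m^2\|D_m\xi\|}D_m\xi$, resp.\ $\sum_m \frac{1}{2^m\|\xi_{a^m}\|}\xi_{a^m}$), so that for (i) it needs only the elementary symmetric-norm inequality $\|D_2\eta\| \le 2\|\eta\|$, and it reduces $p \ne 1$ to $p=1$ by observing that $\zeta$ satisfies the $\Delta_{1/2}$-condition if and only if $\zeta^{1/p}$ does, thereby avoiding all quasi-norm bookkeeping; for (ii) the boundedness of $\eta \mapsto \eta_a$ on $\Sigma(I)$ is cited from \cite[Lemma 2.11]{DFWW} rather than re-derived.

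The one step you should shore up is precisely that re-derivation: the closed graph theorem applies to linear maps between complete linear spaces, whereas $\eta \mapsto \eta_a$ and $\eta \mapsto \eta_{a_\infty}$ are defined only on the cone $\Sigma(I)$. In the am case this is repairable by passing to the Ces\`aro averaging operator on the symmetric sequence space of $I$, since for any $\eta$ in that space the averaged sequence is dominated pointwise by $(\eta^*)_a$ with $\eta^*$ the decreasing rearrangement, and am-stability keeps it in the space; but in the am-$\infty$ case the naive linear extension $\eta \mapsto \langle \frac1n\sum_{j>n}\eta_j\rangle$ need not map the space into itself (tails of a non-monotone rearrangement are controlled only by $\omega\,\|\eta\|_1$, and $\omega \notin \Sigma(I)$ since $I \subset \mathscr L_1$), so "closed graph" does not literally apply there. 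A cheap fix stays on the cone: if $\|\eta^{(k)}\| \le 1$ but $\|\eta^{(k)}_{a_\infty}\| \ge 4^k$, then $\eta := \sum_k 2^{-k}\eta^{(k)} \in \Sigma(I)$ by completeness, while $\eta_{a_\infty} \ge 2^{-k}\eta^{(k)}_{a_\infty}$ and monotonicity of the symmetric norm give $\|\eta_{a_\infty}\| \ge 2^k$ for all $k$, contradicting $\eta_{a_\infty} \in \Sigma(I)$; the same argument covers the am case. With this (or a citation of the DFWW-type lemma) in place of the closed graph theorem, your proof goes through, including the quasi-Banach version of (i), where your Aoki--Rolewicz remark does suffice for convergence of the geometric series.
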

\begin{proof}
\item(i) Clearly,  $I = \bigcup \{L \in \mathscr {PL} \mid L \subset
I\}$, thus proving (i) is equivalent to
proving that for every $\xi \in \Sigma (I)$ there is a $\zeta \geq \xi$ in
  $\Sigma (I)$ that satisfies the $\Delta_{1/2}$-condition or,
equivalently, such that $\zeta^{\frac{1}{p}}$ satisfies the
$\Delta_{1/2}$-condition. Thus without loss of generality
we can assume that $p=1$ and, from \cite[Section 4.5]{DFWW}, that $||\cdot||$ is a complete symmetric
norm on $I$. 
Denote still by  $||\cdot||$ the induced complete cone norm on $\Sigma(I)$.
Then define $\zeta :=  \sum_{m=1}^{\infty}\frac{1}{m^2||D_m \xi||} D_m\xi$.
The series converges in norm and hence $\zeta \in \Sigma (I)$. Then $\xi \leq  \zeta$.  
By the inequality for symmetric norms, $||D_2\eta|| \leq 2||\eta||$, and since $D_2 D_m=D_{2m}$, 
 $\zeta$ satisfies the $\Delta_{1/2}$-condition because

\[
 D_2\zeta =  \sum_{m=1}^{\infty}\frac{1}{m^2||D_m \xi||}D_2 D_m\xi =
\sum_{m=1}^{\infty}4\frac{|| D_{2m}\xi||}{|| D_m\xi|| }\frac{1}{(2m)^2|| D_{2m}\xi||}D_{2m}\xi
 \le 8\zeta.
 \]
\item(ii) Since the finite sum of am-stable principal ideals is am-stable,
and since the union $\bigcup\{L\in \mathscr {SPL} \mid L \subset I\}$ is
directed,  the condition is sufficient by Remark \ref{R: inters}(ii).

The proof of necessity requires showing that for every $\xi \in \Sigma (I)$,
there is a regular $\zeta \geq \xi$ in $\Sigma (I)$.
As in the proof of Corollary \ref{C:inclusions}(i), define $\zeta :=\sum_{m=1}^{\infty}\frac{1}{2^m||\xi_{a^m}||} \xi_{a^m}$. 
By the am-stability of $I$, $\xi_{a^m}\in \Sigma (I)$ for all $m$. 
Then by the completeness of the norm, $\zeta \in \Sigma (I)$. 
By \cite [Lemma 2.11] {DFWW}, the map $\Sigma(I) \ni \eta \to \eta_a \in \Sigma(I) $ is bounded, hence $\zeta$ is regular because
\[
\zeta_a =\sum_{m=1}^{\infty} \frac{1}{2^m||\xi_{a^m}||}\xi_{a^{m+1}}
\leq 2 \sup_m \left ( \frac{||\xi_{a^{m+1}}||}{||\xi_{a^m}||} \right)
\sum_{m=1}^{\infty} \frac{1}{2^{m+1}||\xi_{a^{m+1}}||}\xi_{a^{m+1}}
\leq 2 \sup_m \left ( \frac{||\xi_{a^{m+1}}||}{||\xi_{a^m}||} \right)\zeta.
\]
\item(iii) Same proof as (ii).
\end{proof}

Notice: if an ideal $I$ is ``too small", i.e., if $I \not \supset st^a((\omega))$, then it cannot contain any am-stable ideal and, in particular,
any ideal in $\mathscr{SPL}$. The same conclusion holds for $I = st^a((\omega))$ by Example \ref{E:X}(i).
Similarly, if $I \not \supset \underset{p>0}\cup(\omega)^p$, it cannot contain an ideal in $\Delta_{1/2}\mathscr{PL}$ (Corollary \ref{C:inclusions}(ii)). 
Likewise, if an ideal $I$ is ``too large", i.e., $I \not \subset \underset{p>1}\cup (\omega)^p$, then by Corollary \ref{L:intersections}(iii) it is not the union of ideals in $\mathscr {S_{\infty}PL}$. Example \ref{E:X}(ii) shows that $st_{a_\infty}((\omega))$ is ``too large." \\

While every ideal is the union of principal ideals,
a different and natural question is: which ideals are the union of an
increasing chain of principal ideals.
Salinas \cite{nS74} and others have studied this kind of question
and nested unions play an underlying role throughout the study of ideals.
A partial answer is given by the following proposition.
\begin{proposition}\label{P: nested unions and the continuum
hypothesis} Assuming the continuum hypothesis,
the following hold.
\item[(i)] All ideals are unions of increasing chains of countably
generated ideals.
\item[(ii)] An ideal $I$ is the union of an increasing chain of
principal ideals if for every countably generated ideal
  $J\subset I$ there is a principal ideal $L$ with $J\subset L\subset I$.
\end{proposition}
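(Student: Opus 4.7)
For part (i), my plan is to exploit the fact that under the continuum hypothesis $|c_{\text{o}}^*| = 2^{\aleph_0} = \aleph_1$, so the characteristic set $\Sigma(I)$ of any ideal $I$ has cardinality at most $\aleph_1$. Well-order $\Sigma(I) = \{\xi_\alpha \mid \alpha < \kappa\}$ for some $\kappa \le \omega_1$, and let $I_\alpha$ be the ideal generated by $\{\xi_\beta \mid \beta \le \alpha\}$. Since every proper initial segment of $\omega_1$ is countable, each $I_\alpha$ is countably generated, the family is increasing in $\alpha$, and $\bigcup_{\alpha < \kappa} I_\alpha$ contains every element of $\Sigma(I)$, hence equals $I$.

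For part (ii), I would combine (i) with the hypothesis via transfinite recursion of length $\omega_1$. Using (i), write $I = \bigcup_{\alpha < \omega_1} J_\alpha$ as an increasing union of countably generated ideals, and construct inductively an increasing chain of principal ideals $L_\alpha$ with $J_\alpha \subset L_\alpha \subset I$. For $\alpha = 0$ apply the hypothesis directly to $J_0$. For a successor $\alpha = \beta + 1$, the ideal $L_\beta + J_\alpha$ is countably generated (the sum of a principal ideal and a countably generated ideal) and contained in $I$, so the hypothesis produces a principal $L_\alpha \subset I$ containing it. For a limit $\alpha < \omega_1$, $\alpha$ has countable cofinality, so $\bigcup_{\beta < \alpha} L_\beta$ is a countable union of principal ideals and therefore countably generated; adjoining $J_\alpha$ preserves countable generation and inclusion in $I$, and the hypothesis again yields the required principal $L_\alpha$. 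The union $\bigcup_{\alpha < \omega_1} L_\alpha$ is then sandwiched between $\bigcup_\alpha J_\alpha = I$ and $I$, so equals $I$.

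The only delicate point in the argument is verifying that the inductive construction remains inside the lattice of countably generated ideals at every stage, which is automatic because every proper initial segment of $\omega_1$ is countable and countable unions of countably generated ideals are countably generated. I do not anticipate any genuine obstacle beyond careful transfinite bookkeeping, since the two ingredients---CH for cardinality control, and the hypothesis to transition from countably generated to principal---mesh cleanly at both successor and limit stages.
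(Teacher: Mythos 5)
Your proposal is correct and follows essentially the same route as the paper: part (i) is the same CH-based indexing of $\Sigma(I)$ by countable initial segments of $\omega_1$, and part (ii) is the same transfinite recursion, using that each initial segment of the chain of principal ideals already built is a countable union (hence countably generated) and then invoking the hypothesis to get the next principal ideal inside $I$. The only cosmetic difference is that you treat successor and limit ordinals separately, whereas the paper handles both at once by applying the hypothesis to $\bigcup_{\gamma\prec\beta}(\eta_\gamma)$ and to $I_\beta$ and summing the two resulting generators.
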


\begin{proof}
Since $\Sigma(I)$ has cardinality $c$, by the continuum hypothesis,
$\Sigma(I) = \{ \xi_\gamma \mid \gamma \in [1,\Omega) \}$,
i.e., it can be indexed by the interval of ordinal numbers $[1,\Omega)$
where $\Omega$ is the first uncountable ordinal.
Denote by $\prec$ the well-order relation on $[1,\Omega)$.
\item[(i)] For each $\gamma \in [1,\Omega)$, let $I_\gamma$ be the
ideal generated by the countable collection
$\{\xi_\alpha \mid \alpha \prec \gamma \}$.
Thus $I = \bigcup_{\gamma \in [1,\Omega)} I_\gamma$ is a nested (albeit uncountable) union
of countably generated ideals.
\item[(ii)] By (i) we can represent $I = \bigcup_{\gamma \in
[1,\Omega)} I_\gamma$ as a nested union
  of countably generated ideals and define using transfinite induction
a collection
$\{ \eta_\gamma \in \Sigma(I) \mid \gamma \in [1,\Omega) \}$
 so that $I_\gamma\subset (\eta_{\gamma})$ and
$(\eta_{\gamma})\subset (\eta_{\gamma'})$
  whenever $\gamma \prec \gamma' \in [1,\Omega) $.
Indeed, assume for a given $\beta \in [1,\Omega)$ that $\{ \eta_\alpha
\mid \alpha \precneqq \beta \}$
have been chosen satisfying these two conditions.
It suffices to choose an $\eta_{\beta} \in \Sigma(I)$ so that the collection  $\{ \eta_\gamma \mid \gamma \in [1,\beta] \}$
too satisfies these two conditions.
Since $[1,\beta)$ is countable, $J=\bigcup_{\gamma \in [1,\beta)} (\eta_\gamma)$ is countably generated and hence
$J \subset (\eta')$ and $I_{\beta} \subset (\eta'')$ for some $\eta', \eta'' \in \Sigma(I)$.
Set $\eta_{\beta} := \eta'+\eta'' \in \Sigma(I)$. Then for every $\gamma \prec \beta$ it follows that
$I_{\gamma}\subset (\eta_{\gamma}) \subset J+I_{\beta} \subset (\eta_{\beta})$.
Thus $\{(\eta_\gamma)\}$ forms an uncountable chain and $I = \bigcup_{\gamma \in [1,\Omega)} (\eta_\gamma)$.
\end{proof}

The following corollary shows that Banach ideals and more generally
$e$-complete ideals,
i.e., powers of Banach ideals (see \cite[Section 4.6]{DFWW}),
satisfy the condition in part (ii) of the above proposition.

\begin{corollary}\label{C:countable} 
If $I$ is an $e$-complete ideal and $J$ is a countably generated ideal with $J\subset I$, then there is a principal ideal $L$ with $J\subset L\subset I$. 
Thus, assuming the continuum hypothesis, $I$ is the union of an increasing chain of principal ideals.
\end{corollary}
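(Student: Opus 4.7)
The plan is to establish the first (implication) part of the corollary, from which the ``Thus'' clause follows immediately by invoking Proposition \ref{P: nested unions and the continuum hypothesis}(ii) under CH. So the essential task is, given countable generators $\eta^{(k)}$ of $J \subset I$ with each $\eta^{(k)} \in \Sigma(I)$, to produce a single $\zeta \in \Sigma(I)$ with $\eta^{(k)} = O(\zeta)$ for every $k$; then $L := (\zeta)$ satisfies $J \subset L \subset I$.

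I would proceed in two stages, mirroring the structure of Proposition \ref{P:complete}(i). First, treat the Banach-ideal case. By \cite[Section 4.5]{DFWW} one may assume that $I$ carries a complete symmetric norm $\|\cdot\|$, and the induced complete cone norm on $\Sigma(I)$ dominates the $c_o$-norm by a fixed constant. Discarding any $\eta^{(k)} = 0$, set
\[
\zeta := \sum_{k=1}^{\infty} \frac{1}{2^k \|\eta^{(k)}\|}\, \eta^{(k)}.
\]
Since $\sum 2^{-k} < \infty$, the series is absolutely convergent in the cone norm, so it converges to some $\zeta \in \Sigma(I)$; and since cone-norm convergence forces pointwise convergence, $\zeta$ equals the pointwise sum. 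Consequently $\zeta$ is monotone nonincreasing (being a pointwise sum of nonnegative monotone nonincreasing sequences) and dominates each summand termwise. Hence $\eta^{(k)} \leq 2^k\|\eta^{(k)}\|\,\zeta$, i.e., $\eta^{(k)} = O(\zeta)$, giving $(\eta^{(k)}) \subset (\zeta) \subset I$ for every $k$.

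The general $e$-complete case $I = B^p$ reduces to the Banach case as in Proposition \ref{P:complete}(i): the sequences $(\eta^{(k)})^{1/p}$ all lie in $\Sigma(B)$, so the Banach case applied to this family produces a $\mu \in \Sigma(B)$ with $(\eta^{(k)})^{1/p} = O(\mu)$ for every $k$; then $\zeta := \mu^p \in \Sigma(I)$ satisfies $\eta^{(k)} = O(\zeta)$, and $L := (\zeta)$ works as before.

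The main technical point is the convergence of $\zeta$ (or $\mu$) in the complete cone norm on $\Sigma(I)$ (resp.\ $\Sigma(B)$) together with the passage from norm convergence to pointwise convergence so that the monotonicity of $\zeta$ and the pointwise domination $\zeta \geq \frac{\eta^{(k)}}{2^k\|\eta^{(k)}\|}$ are preserved; these are routine consequences of the symmetric-norm formalism, and I do not expect any essential obstacle beyond this bookkeeping. The ``Thus'' clause then follows at once from Proposition \ref{P: nested unions and the continuum hypothesis}(ii), since the implication just proved is precisely its hypothesis.
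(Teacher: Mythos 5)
Your proof is correct and follows essentially the same route as the paper: reduce the $e$-complete case to the Banach case via $p$-th roots/powers, then use completeness of the symmetric norm to sum a weighted series of the generators (the paper uses weights $k^{-2}$ instead of $2^{-k}$, which is immaterial) to obtain a single $\zeta\in\Sigma(I)$ dominating all $\eta^{(k)}$, and finally invoke Proposition \ref{P: nested unions and the continuum hypothesis}(ii) for the CH clause.
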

\begin{proof}
Since powers of countably generated (resp., principal) ideals are countably generated (resp., principal), 
without loss of generality assume that $I$ is a Banach ideal.
Let $\{\eta^{(k)}\}$ be a sequence of generators of $J$. 
Then $\zeta := \sum_{k=1}^{\infty}\frac{1}{k^2||\eta^{(k)}||}\eta^{(k)}\in \Sigma(I)$, 
and so setting $L := (\zeta)$ one has $(\eta^{(k)}) \subset L$ for all $k$, hence $J \subset L\subset I$. 
(This is a proof that countably generated Banach ideals and hence countably generated e-complete ideals are principal. See also \cite [Corollary 2.23]{DFWW}.)
Then by Proposition \ref{P: nested unions and the continuum hypothesis}, $I$ is the union of an increasing chain of principal ideals.
\end{proof}

\noindent Notice that neither the $e$-completeness condition in the corollary nor the
condition in part (ii) of the proposition are necessary.
Indeed, for the former, any principal ideal that is not $e$-complete (i.e., whose
generator does not satisfy the $\Delta_{1/2}$-condition
\cite[Corollary 2.23]{DFWW})
  still satisfies automatically the conclusion of the corollary.
And for the latter, any
countably generated ideal that is not principal
fails to satisfy the condition in part (ii) and yet is obviously the
union of a (countable) chain of principal ideals.

We also do not know whether or not the continuum hypothesis or
something stronger than the usual axioms of set theory
is required for Proposition \ref {P: nested unions and the continuum
hypothesis}.

\begin{remark}\label{R: Ban Ideal am-stable +}
By combining Propositions \ref{P:complete} and  \ref{P: nested unions and the continuum hypothesis},
and assuming the continuum hypothesis, if $I$ is a Banach ideal,
then $I$ is am-stable if and only if it is a union of a chain of am-stable principal ideals.
And if $I$ is a positive power of a Banach ideal, then it is the union of a chain of $\Delta_{1/2}\mathscr{PL}$ ideals.
\end{remark}

\section{\leftline {Applications to first order arithmetic mean ideals} }\label{S:5}

By the main result of \cite{DFWW}, an ideal $I$ is am-stable,
i.e., $I = I_a$, if and only if $I = [I, B(H)]$ (the latter is the
commutator space of the ideal,
i.e., the span of the commutators of elements of $I$ with bounded
operators) if and only if
$I$ does not support any nonzero trace (unitarily invariant linear
functional on $I$).

We can reformulate some of the results obtained in the previous
section in terms of traces.
By Corollary \ref{C:inclusions}(i), every principal ideal $I$ is
contained in a principal ideal $J$
  supporting no nonzero traces; so no nonzero trace on $I$ can
extend to a trace on $J$.
Conversely, \linebreak
by Corollary \ref{C:inclusions}(ii), if a principal ideal
$I$ is sufficiently large to contain $(\omega^p)$ for some
\linebreak$0<p<1$, then $(\omega^p)= [(\omega^p), B(H)] \subset [I, B(H)]$
  and hence every trace on $I$ must necessarily vanish on $(\omega^p)$.
More generally, repeatedly employing Theorem \ref{T: SPL density in PL} and
Proposition \ref{P:lower irregular},
  we see that every am-stable principal ideal
is the first ideal of an increasing (or decreasing) countable chain
of principal ideals where every odd numbered ideal
  is am-stable and hence has no nonzero trace, and every even
numbered ideal supports infinitely many nonzero traces
(its space of traces is infinite-dimensional)
(\cite[Theorem 7.5]{vKgW04-Traces}, \cite[Proposition 4.6(i)]{vKgW04-Soft}).

The am-$\infty$ case is similar, but with the difference that an
ideal $I$ is am-$\infty$ stable \linebreak
(i.e., $I=\,_{a_{\infty}}I$) if and only if $I=F+[I,B(H)]$
if and only if it supports a unique nonzero trace (up to scalar multiples). 
In this case, $I \subset \mathscr L_1$ and this trace is
of course the restriction to $I$ of the usual trace $Tr$
on the trace class $\mathscr {L}_1$ \cite[Theorem 6.6]{vKgW04-Traces}.
Equivalently, by a Hamel basis argument, an ideal contained in $\mathscr {L}_1$ is not
am-$\infty$ stable if and only if it
supports a nonzero singular trace, i.e., a trace that vanishes on $F$.
And by  Theorem \ref{T: SinftyPL density} and Proposition \ref{P:lower irregular},
every am-$\infty$ stable principal ideal is the first ideal of an increasing countable chain of principal ideals 
(or a decreasing countable chain in the case that the ideal is not $F$) 
where every odd numbered ideal is am-$\infty$ stable and hence supports no nonzero singular trace, 
and every even numbered ideal supports infinitely many singular traces
(again see \cite[Theorem 7.5]{vKgW04-Traces} and \cite[Proposition 4.6(i)]{vKgW04-Soft}).\\

First order arithmetic mean ideals are investigated in \cite{DFWW} and \cite {vKgW02}-\cite{vKgW04-Soft}
and the density theorems of Section 3 provide further information for
the principal ideal cases.
Recall that am-stability of an ideal $I$ is equivalent to $I=\,_aI$.
Thus if $I$ is am-stable, so are $I_a$ and $_aI$ and hence also the first order arithmetic mean ideals $I^o := (_aI)_a$, $I^- := \,_a(I_a)$,
$I^{oo}$ (the smallest am-open ideal containing $I$), since $I\subset I^{oo} \subset I_a$, and $I_-$
(the largest am-closed ideal contained in $I$), since $_aI\subset I_-\subset I$.
The same relations hold for the am-$\infty$ case with the exception that we only have $I \cap \mathscr{L}_1 \subset I^{-\infty} \subset I_{a_{\infty}}$.
While in general these first order arithmetic mean ideals (resp., am-$\infty$ ideals) can be am-stable without the ideal $I$ being am-stable
(e.g., see Examples \ref{E:N} and \ref{E:L}), principal ideals are more ``rigid."

\begin{theorem}\label{T: I am-stable iff}
Let $I$ be a nonzero principal ideal. Then the following are equivalent.
\item[(i)] $I$ is am-stable.
\item[(ii)] $_aI$ is nonzero and am-stable.
\item[(ii$'$)] $I^o$ is nonzero and am-stable.
\item[(ii$''$)] $I_-$ is nonzero and am-stable.
\item[(iii)] $I^{oo}$ is am-stable.
\item[(iv)] $I_a$ is am-stable.
\item[(iv$'$)] $I^-$ is am-stable.
\end{theorem}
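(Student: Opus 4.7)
The plan is to first dispense with the trivial direction (i) $\Rightarrow$ (ii)--(iv$'$): am-stability $I = I_a = \,_aI$ collapses the 5-chain $_aI\subset I^o\subset I\subset I^-\subset I_a$ and the chain $I_-\subset I\subset I^{oo}$ all to $I$, so every derived ideal equals $I$ and is thus nonzero and am-stable. The six reverse implications are grouped into ``upper-half'' conditions (iv), (iv$'$), (iii) and ``lower-half'' conditions (ii), (ii$'$), (ii$''$), handled using the identities $I_a = (\,_a(I_a))_a$ and $_aI = \,_a((_aI)_a)$ from Section~\ref{S:2} together with the external input \cite[Theorem 3.10]{DFWW}: a sequence $\xi$ is regular if and only if $\xi_a$ is.

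For the upper half, the principal ideal $I=(\xi)$ yields the principal ideal $I_a = (\xi_a)$, whose generator automatically satisfies the $\Delta_{1/2}$-condition. Thus $I_a$ is am-stable iff $\xi_a$ is regular iff $\xi$ is regular iff $I$ is am-stable, proving (iv) $\Leftrightarrow$ (i). The identity $(I^-)_a = I_a$ reduces (iv$'$) to (iv): am-stability of $I^-$ forces $I^- = (I^-)_a = I_a$. For (iii), the inclusions $I\subset I^{oo}\subset I_a$ applied to the am-operation give $I_a\subset(I^{oo})_a=I^{oo}$, and combined with $I^{oo}\subset I_a$ this yields $I^{oo}=I_a$, reducing once more to (iv).

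For the lower half, the dual identity $_aI = \,_a(I^o)$ gives (ii) $\Leftrightarrow$ (ii$'$): am-stability of $_aI$ yields $_aI = (_aI)_a = I^o$, and am-stability of $I^o$ gives $\,_aI = \,_a(I^o) = I^o$. The crux is (ii$'$) $\Rightarrow$ (i). Assuming $I^o = \,_aI$ is am-stable and nonzero, argue by contradiction: were $\xi$ not regular, the strategy is to produce $\eta\in\Sigma(_aI)$ with $\eta_a\asymp D_m\xi$ for some $m\in\mathbb{N}$; iterated am-stability of $_aI$ then forces $\eta_{a^2}\in\Sigma(I)$, and since $\eta_{a^2}\asymp D_m\xi_a$, this delivers $\xi_a = O(D_{m'}\xi)$, which via the $\Delta_{1/2}$-property of $\xi_a$ promotes to $\xi_a = O(\xi)$, a contradiction. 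The existence of such an $\eta$ is guaranteed by the nonvanishing hypothesis (which forces $\xi\ge c\omega$ up to ampliation, since any monotone $\eta_0\in\Sigma(_aI)$ gives $(\eta_0)_a\ge(\eta_0)_1\omega\in\Sigma(I)$) combined with a blockwise sequence construction in the spirit of the density proofs of Section~\ref{S:3}; this construction is the principal technical obstacle. The implication (ii$''$) $\Rightarrow$ (i) follows along similar lines: am-stability forces $I_-$ to be both am-open and am-closed, so by the maximality/minimality of $I^o$ (largest am-open in $I$) and the am-closedness of $I^o$ that this forces, one gets $I_-=I^o$, reducing to (ii$'$).
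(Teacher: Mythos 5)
Your ``upper half'' is correct and genuinely different from the paper's route: the paper deduces (iv)$\Rightarrow$(i), (iv$'$)$\Rightarrow$(i) and (iii)$\Rightarrow$(i) from the density of $\mathscr{SPL}$ in $\mathscr{PL}$ (Theorem \ref{T: SPL density in PL}), whereas you get them from $I_a=(\xi_a)$ plus the external fact that $\xi$ is regular iff $\xi_a$ is regular \cite{DFWW}, together with the identities $(I^-)_a=I_a$ and $(I^{oo})_a=I^{oo}\Rightarrow I^{oo}=I_a$; the equivalence (ii)$\Leftrightarrow$(ii$'$) via $\,_a(I^o)=\,_aI$ is also fine. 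The genuine gap is the crux you yourself flag: (ii$'$) (equivalently (ii)) $\Rightarrow$ (i). Producing $\eta\in\Sigma(\,_aI)$ with $\eta_a\asymp D_m\xi$ means in particular $\xi=O(\eta_a)$ with $\eta\in\Sigma(\,_aI)$, i.e.\ $\xi\in\Sigma((\,_aI)_a)=\Sigma(I^o)$, i.e.\ $I=I^o$; once you know $I=I^o$ and $I^o$ is am-stable you are finished, so the postponed ``blockwise construction'' is not a technical detail but the entire content of the implication, and no construction is given. Worse, the justification you sketch for its existence uses only the nonvanishing of $\,_aI$ (i.e.\ $(\omega)\subset I$); with only that hypothesis the existence claim is false, since a principal ideal containing $(\omega)$ whose generator fails the $\Delta_{1/2}$-condition (e.g.\ the ideal of Example \ref{E:lower KD}(i)) has $\,_aI\neq\{0\}$ but is not am-open, hence admits no such $\eta$. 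Any correct construction must exploit the am-stability of $I^o$ in an essential way, and your sketch gives no indication of how. The paper closes exactly this hole with density: if $\{0\}\neq I^o\subsetneq I$ and $I^o$ is am-stable, Theorem \ref{T: SPL density in PL} provides an am-stable principal ideal $J$ with $I^o\subsetneq J\subsetneq I$; am-stable ideals are am-open, so $J=J^o\subset I^o$ by monotonicity of the interior, a contradiction (and the same device handles (iii) and (iv) there).

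A secondary problem is your reduction of (ii$''$): the assertion that am-stability of $I_-$ ``forces'' $I^o$ to be am-closed is not justified by anything you wrote. The paper instead uses the identity $I_-=\,_aI$ for principal ideals (\cite[Theorem 2.8]{vKgW04-Soft}), which makes (ii)$\Leftrightarrow$(ii$''$) immediate. If you wish to avoid that citation, a correct repair along your lines is: $I_-\subset I$ gives $\,_a(I_-)\subset\,_aI$, so if $I_-$ is am-stable then $I_-=(\,_a(I_-))_a\subset(\,_aI)_a=I^o$, whence $I_-=\,_a(I_-)\subset\,_a(I^o)=\,_aI\subset I_-$, i.e.\ $\,_aI=I_-$ is nonzero and am-stable, reducing (ii$''$) to (ii). This repair, however, does not affect the main missing step above, which leaves the theorem unproved as it stands.
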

\begin{proof}
The equivalences of (ii) and (ii$'$), and of (iv) and (iv$'$), the 5-chain inclusions \linebreak
$_aI\subset I^o \subset I\subset I^-\subset I_a$ and $_aI\subset I_-
\subset I\subset I^{oo}\subset I_a$
and, consequently, the fact that (i) implies all the other conditions,
all hold for general ideals.
(See \cite[Section 2]{vKgW04-Soft} for details.)
For principal ideals, the identity $I_-=\,_aI$ is proven in [ibid.,
Theorem 2.8] and hence (ii) is equivalent to (ii$''$).
Thus it remains to prove that each of the conditions (ii$'$), (iii)
and (iv) imply (i).
By [ibid., Lemma 2.12] $I^o$, $I^{oo}$ and $I_a$ are principal when
$I$ is principal. Assume that $\{0\} \neq  I^o \subsetneq I$ and that $I^o$ is
am-stable. Then by Theorem \ref{T: SPL density in PL},
  there is an am-stable principal ideal $J$ with
$I^o \subsetneq J \subsetneq I$. Since $J=J^o$, by the monotonicity
of the am-interior operation,
$J\subset I^o$, a contradiction.
Similarly, if $I \subsetneq I^{oo}$ and $I^{oo}$ is am-stable (resp.,
$I \subsetneq I_a$ and $I_a$ is am-stable), then
there is an am-stable principal ideal $I \subsetneq  J\subsetneq
I^{oo}$ (resp., $I \subsetneq  J\subsetneq I_a$)
  and $J$ being am-open, $I^{oo} \subset J$ (resp., $I_a \subset J$), a contradiction.
\end{proof}

\noindent Notice that for any ideal $I$ that does not contain
$(\omega)$ (resp., $\mathscr L_1$), $_aI=I^o = \{0\}$
(resp., $I_-= \{0\}$). Since $\{0\}$ is am-stable while in both cases
$I$ is not,
  the conditions in the last corollary that $I_a$, $I^o$ and $I_-$
are non-zero is essential. \\

There are two main differences in the am-$\infty$ case.
The first is that if $I \neq \{0\}$, then unlike the am-case, $_{a_{\infty}}I$ is never zero, and in lieu of the two chains of inclusions, we have
$_{a_{\infty}}I \subset I^{o\infty} \subset I$, $_{a_{\infty}}I \subset I_{-\infty} \subset I$, $I\cap \mathscr L_1 \subset I^{-\infty} \subset I_{a_{\infty}}$, and $I \cap se(\omega) \subset I^{oo\infty} \subset I_{a_{\infty}}$. \linebreak
(See \cite[Proposition 4.8(i)-(i$'$)]{vKgW04-Traces} and remarks following both \cite[Corollary 3.8 and Proposition 3.14]{vKgW04-Soft}).
The second is that if $I$ is principal,
then $I^{o\infty}$, $I^{oo\infty}$ and $I_{a_{\infty}}$ are either principal or they coincide with $se(\omega)$
\cite[Lemma 4.7]{vKgW04-Traces}, \cite[Lemma 3.9, Lemma 3.16]{vKgW04-Soft}.
As $se(\omega)$ is not am-stable, if any of the ideals $I^{o\infty}$, $I^{oo\infty}$ or $I_{a_{\infty}}$ is am-stable, then it is principal as well.
With these minor changes and the properties of arithmetic mean ideals at infinity developed in \cite[Section 3]{vKgW04-Soft},
the proof of Theorem \ref{T: I am-stable iff} carries over to the am-$\infty$ case.

\begin{theorem}\label{T: I am-inf stable iff}
Let $I \neq \{0\}$ be a principal ideal. Then am-$\infty$ stability
of the following ideals is equivalent:
$I$, $_{a_{\infty}}I$, $I^{o{\infty}}$, $I_{-{\infty}}$,
$I^{oo\infty}$, $I_{a_{\infty}}$, and $I^{-\infty}$.
\end{theorem}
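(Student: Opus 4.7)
My plan is to mirror the proof of Theorem \ref{T: I am-stable iff} step by step, replacing each am-operation by its am-$\infty$ analog and invoking Theorem \ref{T: SinftyPL density} wherever Theorem \ref{T: SPL density in PL} was used. The implication from am-$\infty$ stability of $I$ to am-$\infty$ stability of each of the six derived ideals is immediate from the am-$\infty$ 5-chains listed just before the theorem together with the monotonicity of the am-$\infty$ operations: am-$\infty$ stability gives $I \subset \mathscr L_1$, so $I = I \cap \mathscr L_1 = I \cap se(\omega)$, and the chains collapse.

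For the converse direction I would first use the Section 2 identities to reduce the number of essentially distinct cases. The identity $_{a_{\infty}}((_{a_{\infty}}I)_{a_{\infty}}) = \,_{a_{\infty}}I$ implies that am-$\infty$ stability of $I^{o\infty}$ is the same as that of $\,_{a_{\infty}}I$, while $(_{a_{\infty}}(I_{a_{\infty}}))_{a_{\infty}} = I_{a_{\infty}}$ implies that am-$\infty$ stability of $I^{-\infty}$ is the same as that of $I_{a_{\infty}}$. The am-$\infty$ analog $I_{-\infty} = \,_{a_{\infty}}I$ for principal $I$ (from \cite[Section 3]{vKgW04-Soft}, playing the role of $I_- = \,_aI$ in the am-case) merges the $I_{-\infty}$ and $\,_{a_{\infty}}I$ cases. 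This leaves three essential implications: am-$\infty$ stability of $\,_{a_{\infty}}I$, of $I^{oo\infty}$, or of $I_{a_{\infty}}$ each imply am-$\infty$ stability of $I$.

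Each of these I would establish by contradiction, exactly as in the am-case. Suppose $I$ is not am-$\infty$ stable. If $\,_{a_{\infty}}I$ is am-$\infty$ stable, then $\,_{a_{\infty}}I \subsetneq I$, and the lower density in Theorem \ref{T: SinftyPL density} gives an am-$\infty$ stable principal $J$ with $\,_{a_{\infty}}I \subsetneq J \subsetneq I$; but $J = \,_{a_{\infty}}J \subset \,_{a_{\infty}}I$ by monotonicity, a contradiction. If $I_{a_{\infty}}$ is am-$\infty$ stable then in particular $I \subset \mathscr L_1$, so if $I = I_{a_{\infty}}$ then $I$ is am-$\infty$ stable, while if $I \subsetneq I_{a_{\infty}}$ the upper density produces $J$ strictly between them with $J = J_{a_{\infty}} \supset I_{a_{\infty}}$, a contradiction. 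The case of $I^{oo\infty}$ is similar, the only wrinkle being that the am-$\infty$ chain only gives $I \cap se(\omega) \subset I^{oo\infty}$, which I would handle by noting that am-$\infty$ stability of $I^{oo\infty}$ places it, and hence $I \cap se(\omega)$, inside $\mathscr L_1$, and then arguing on the principal ideal $I \cap se(\omega)$ (which is in $\mathscr{PL}$ by Section 2) in place of $I$.

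The main obstacle is the degeneracy noted in the paragraph preceding the theorem: for principal $I$ each of $I^{o\infty}$, $I^{oo\infty}$, $I_{a_{\infty}}$ is either principal or equals $se(\omega)$, and the density Theorem \ref{T: SinftyPL density} is stated only for $\mathscr{PL}$. This is precisely why one needs the observation that $se(\omega)\not\subset\mathscr L_1$ and hence is not am-$\infty$ stable, which forces the principal alternative in every case where the hypothesis of am-$\infty$ stability is in play, so the density argument applies. Once this degeneracy is ruled out, the only remaining work is a routine transcription of the am-case argument with $_{a_\infty}$, $I^{o\infty}$, $I_{a_\infty}$, $I^{-\infty}$, $I_{-\infty}$, $I^{oo\infty}$ replacing their am-analogs.
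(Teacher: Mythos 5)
Your overall strategy is exactly the one the paper intends (the paper itself only says the proof of Theorem \ref{T: I am-stable iff} ``carries over'' given the modified chains and the fact that $I^{o\infty}$, $I^{oo\infty}$, $I_{a_\infty}$ are principal or equal $se(\omega)$), and your reductions via the identities, the $I_{-\infty}=\,_{a_\infty}I$ analog, and the density arguments for $_{a_\infty}I$ and $I_{a_\infty}$ are in the right shape. But your handling of the two ``upper'' cases has genuine gaps. For $I^{oo\infty}$: your plan to ``argue on the principal ideal $I\cap se(\omega)$'' fails twice. First, $I\cap se(\omega)$ is not principal in general --- $se(\omega)$ is not a principal ideal, so the Section 2 lattice fact about $(\xi)\cap(\eta)$ does not apply; e.g.\ $(\omega)\cap se(\omega)=se(\omega)$. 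Second, even if it were principal, the argument would only show that $I\cap se(\omega)$ (which has the same $^{oo\infty}$ as $I$) is am-$\infty$ stable; to conclude that $I$ itself is, you must know $I\subset se(\omega)$, and that is precisely the missing step. The correct route, which the paper uses elsewhere (proof of Theorem \ref{T: cancellation}(B)), is that am-$\infty$ stability of $I^{oo\infty}$ gives $I^{oo\infty}\ne se(\omega)$, and then \cite[Lemma 3.16(ii)]{vKgW04-Soft} yields $I\subset se(\omega)$, so $I\subset I^{oo\infty}$ and the am-case argument applies verbatim to $I$.

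The $I_{a_\infty}$ case has the same kind of gap in milder form: ``in particular $I\subset\mathscr L_1$'' is not automatic, since the am-$\infty$ chains only give $I\cap\mathscr L_1\subset I^{-\infty}\subset I_{a_\infty}$, not $I\subset I_{a_\infty}$. You need the corresponding structure fact for principal ideals (e.g.\ that $I_{a_\infty}=se(\omega)$ whenever $I\not\subset\mathscr L_1$, or via $I^{-\infty}=I_{a_\infty}\ne\mathscr L_1$ together with \cite[Lemma 3.16(i)]{vKgW04-Soft}) to force $I\subset\mathscr L_1$ and hence $I\subset I_{a_\infty}$ before invoking density. Two smaller points: in the $_{a_\infty}I$ branch you should note that stability makes $_{a_\infty}I=(_{a_\infty}I)_{a_\infty}=I^{o\infty}$, which is what guarantees principality (the dichotomy is stated for $I^{o\infty}$, not for $_{a_\infty}I$); and your labels ``upper'' and ``lower'' density are swapped relative to Definition \ref{L: density, strong density and gaps} (inserting an $\mathscr{S_\infty PL}$ ideal between $_{a_\infty}I$ and the larger $I$ uses \emph{upper} density), which is harmless only because Theorem \ref{T: SinftyPL density} provides both.
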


The fact that a sequence $\xi \in \co* $ is regular if and only if
$\xi_a$ is regular has been proved in a
number of different ways (see \cite[Remark 3.11]{DFWW} for a discussion).
The analogous result for $\infty$-regularity was obtained in
\cite{vKgW04-Traces}.
Both facts are also an immediate consequence of Theorems \ref{T: I
am-stable iff} and \ref{T: I am-inf stable iff}:

\begin{corollary}\label{C: higher regularities} Let $I$ be a
principal ideal and let $n\in \mathbb N$.
\item[(i)]$I_{a^n}$ is am-stable if and only if $I$ is am-stable.
\item[(ii)]$I_{a_\infty^n}$ is am-$\infty$ stable if and only if $I$
is am-$\infty$ stable.
\end{corollary}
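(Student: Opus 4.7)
The plan is to deduce both statements by iterating the single-step equivalences already proved in Theorems \ref{T: I am-stable iff} and \ref{T: I am-inf stable iff}. The forward implications are essentially tautological: am-stability of $I$ means $I = I_a$, so $I_{a^n} = I$ for every $n$ and is am-stable; likewise am-$\infty$ stability of $I$ gives $I_{a_\infty^n} = I$.

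For (i) in the reverse direction, I would argue by downward induction on $k$, starting from $k=n$. Since $I$ is principal, by \cite[Lemma 2.12]{vKgW04-Soft} (invoked in the proof of Theorem \ref{T: I am-stable iff}) so is $I_a$, and by iteration so is each $I_{a^k}$. Applying Theorem \ref{T: I am-stable iff} to the principal ideal $I_{a^{k-1}}$ gives that am-stability of $I_{a^k} = (I_{a^{k-1}})_a$ is equivalent to am-stability of $I_{a^{k-1}}$. Iterating this step from $k=n$ down to $k=1$ yields am-stability of $I$.

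For (ii), the same inductive scheme uses Theorem \ref{T: I am-inf stable iff} in place of Theorem \ref{T: I am-stable iff}. The main obstacle, absent in (i), is that for principal $I$ the ideal $I_{a_\infty}$ is only known to be either principal or equal to $se(\omega)$, and the theorem to be iterated requires its input to be principal. The plan is to rule out the $se(\omega)$ possibility at every stage of the induction. To do so, I would observe that am-$\infty$ stability forces containment in $\mathscr L_1$, while summable $o(\omega)$ sequences such as $\eta_k = 1/(k\log^2 k)$ have am-$\infty$ of order $1/(n\log n)$, which is not summable. Hence $(se(\omega))_{a_\infty}$ is not contained in $\mathscr L_1$, and this non-summability is preserved under further $()_{a_\infty}$-iterates. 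Consequently, if any $I_{a_\infty^{k-1}}$ coincided with $se(\omega)$, then $I_{a_\infty^n}$ could not lie in $\mathscr L_1$ and so could not be am-$\infty$ stable, contradicting the hypothesis.

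Granted this, every iterate $I_{a_\infty^k}$ encountered in the downward induction is principal, so Theorem \ref{T: I am-inf stable iff} applies at each step and delivers the equivalence between am-$\infty$ stability of $I_{a_\infty^k} = (I_{a_\infty^{k-1}})_{a_\infty}$ and of $I_{a_\infty^{k-1}}$. Walking back from $k=n$ to $k=0$ completes the proof. The genuinely delicate point of the argument, and the one I would spend most care on, is the verification of the $se(\omega)$ obstruction described in the previous paragraph, since all the remaining steps are purely formal once the principal-ness of the iterates is secured.
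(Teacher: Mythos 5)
Your proposal is correct and takes essentially the paper's own route: the paper derives the corollary precisely by iterating the one-step equivalences of Theorems \ref{T: I am-stable iff} and \ref{T: I am-inf stable iff} (using that $I_a$, and in the am-$\infty$ case each principal iterate's $a_\infty$, stays principal), which is exactly your downward induction, and your extra check that no iterate can equal $se(\omega)$ is the right way to make the paper's ``immediate consequence'' rigorous. The one point you assert without proof --- that non-summability persists under further $a_\infty$-iterates --- follows at once from the inclusion $J\cap\mathscr L_1\subset J_{a_\infty}$ and the monotonicity of $L\mapsto L_{a_\infty}$: every iterate of $se(\omega)$ contains $se(\omega)\cap\mathscr L_1$, hence its next iterate contains a sequence $\asymp\langle\tfrac{1}{n\log n}\rangle$ and so cannot lie in $\mathscr L_1$.
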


It may be of interest to note that Theorem \ref{T: I am-stable iff}
does not extend beyond principal ideals. In fact, as the two examples
below show, it can fail even for countably generated ideals.
It is an open question whether or not Theorem \ref{T: I am-inf stable iff}
extends beyond principal ideals, indeed,
even whether or not am-$\infty$ stability of $I_{a_{\infty}}$
implies am-$\infty$ stability of $I$.

\begin{example}\label{E:N}
A countably generated ideal $N$ which is not am-stable but for which \linebreak
$_aN =N^o =N_-= st^a((\omega))$ is am-stable.
\end{example}

\begin{proof}
Define the sequence $\zeta_i = 2^{-k^3}$ for $i\in [2^{k^3}, 2^{(k+1)^3})$.
Since
\[\zeta_{2^{(k+1)^3-1}}= 2^{-k^3} \neq  O((\omega log^k)_{2^{(k+1)^3-1}})\]
and since $\omega log^k$ is increasing in $k$, $\zeta \neq  O(\omega log^p)$
for every $p\in \mathbb N$.
As $\omega log^p$ satisfies the $\Delta_{1/2}$-condition,
  it follows that  $\zeta \not\in \Sigma((\omega log^p))$  and hence
$\zeta \not\in \Sigma(st^a((\omega)))$.
On the other hand, if for some $p\in \mathbb N$ and some $\rho \in \co*$,
$\rho_a \leq \zeta + \omega log^p$, then for all $i\in [2^{k^3},
2^{(k+1)^3})$ and for $k$ large enough,
\begin{align*}
i(\rho_a)_i &\leq 2^{(k+1)^3} (\rho_a)_{2^{(k+1)^3}} \leq 2^{(k+1)^3}
\left (2^{-(k+1)^3}
+ \frac{log^p (2^{(k+1)^3})}{2^{(k+1)^3}}\right)\\
&= 1 + log^p (2^{(k+1)^3}) \leq 2 log^p (2^{k^3}) \leq 2log^p i = 2i
(\omega log^p)_i.
  \end{align*}
Thus $\rho_a = O(\omega log^p)$, which proves that $(\zeta + \omega log^p)^o = (\omega log^p)$ since $(\omega log^p) = (\omega)_{a^p}$ which is am-open.
Let $N:= (\zeta)+st^a((\omega))= \bigcup_p (\zeta + \omega log^p)$.
Then $N$ is countably generated and $N\neq st^a((\omega))$.
 By \cite[Lemma 2.1]{vKgW04-Soft}, $N^o =  \bigcup_p (\zeta + \omega log^p)^o =  \bigcup_p (\omega log^p)= st^a((\omega))$. Thus $N^o=\,_a(N^o) =\,_aN \ne N$. 
By [ibid, Theorem 2.8], $N_-=\,_aN $, which concludes the proof.
\end{proof}

\begin{example}\label{E:L}
A countably generated ideal $L$ which is not am-stable but for which \linebreak
$L_a =L^- =L^{oo} = st^a((\omega))$ is am-stable.
\end{example}

\begin{proof}
We construct inductively a sequence of sequences $\xi^{(p)} \in
\text{c}_{o}^*$  for which the principal ideals
$(\xi^{(1)}+\cdots+\xi^{(p)}) \not  \supset (\omega)$  and
$(\xi^{(p)})_a = (\omega)_{a^{p+1}}$. The initial $p=1$ case is a special case of the general induction.
Assume the construction up to $p -1\ge 1$. Since $\omega \neq O(\xi^{(1)}+\cdots+\xi^{(p-1)})$,
choose an increasing sequence of integers $r_i$ for which
$(\frac{\xi^{(1)}+\cdots+\xi^{(p-1)}}{\omega})_{r_i} \to 0$.
For the initial case $p=1$ simply choose $r_i=i$.

Construct inductively two strictly increasing sequences of integers,
$q_{k-1} < n_{k-1} < q_k$ with $\frac{n_k}{log^pn_k} >(k+1)2^p$,
  starting with $q_o = 1$ and with an integer $n_o$
chosen so that $\frac{n_o}{log^pn_o} >2^p$
and $n_o > e^p$ (the latter condition ensures that $\frac{log^p j}{j}$ is decreasing for $j \ge n_o$).
Assuming the construction of $q_j,n_j$ up to $k-1 \geq 0$, choose
$q_k := \max\,\{j\in \mathbb N \mid \frac{1}{kn_{k-1}} \leq \frac{log^p j}{j}\}$; thus  $kn_{k-1} < q_k < n_{k-1}^2$.
  Then choose $n_k> q_k+1$ from among the sequence
$\{r_i\}$ and sufficiently large to satisfy the conditions
$\sum_{j=q_k+1}^{n_k-1} \frac{log^p j}{j} \geq \frac{1}{2}\sum_{j=1}^{n_k} \frac{log^p j}{j}$
and $\frac{n_k}{log^pn_k} >(k+1)2^p$.

Now define the sequence $\xi^{(p)}_j := \min\,\{\frac{1}{kn_{k-1}},\frac{log^p j}{j}\}$ for
$n_{k-1} \leq j < n_k$  and $k \ge 1$, setting $\xi^{(p)}_j :=1$  for $1 \le j < n_o$.
Then $\xi^{(p)} \in \co*$, $\xi^{(p)}_j \leq (\omega log^p)_j$ for $j\ge n_o$,
and so $(\xi^{(p)})\subset (\omega log^p)$.
An elementary computation shows that the sequence inequalities
\[
C_p\,\omega\, log^{p+1} \leq (\omega log^p)_a \leq \omega log^{p+1}
\]
hold for some constant $0 < C_p < 1$, hence the ideal identities
$(\omega log^p)_a = (\omega log^{p+1})=(\omega)_{a^{p+1}}$ also hold. Thus
$(\xi^{(p)})_a \subset (\omega)_{a^{p+1}}$.
To obtain equality, we show that $(\omega log^p)_a = (\xi^{(p)})_a$.
Indeed, for $k \ge 2$ and  $n_{k-1} \leq j \leq q_k$,
\begin{align*}
j(\xi^{(p)}_a)_j &\geq \sum_{i=q_{k-1}+1}^{n_{k-1}-1}\xi_i^{(p)}
= \sum_{i=q_{k-1}+1}^{n_{k-1}-1}\frac{log^p i}{i}
\ge \frac{1}{2} \sum_{i=1}^{n_{k-1}}\frac{log^p i}{i}
\ge \frac{C_p}{2} log^{p+1}n_{k-1}\\
&\ge\frac{C_p}{2} log^{p+1} \sqrt{q_k}
\ge  \frac {C_p}{2^{p+2}}log^{p+1}j
\ge   \frac {C_p}{2^{p+2}}j((\omega log^p)_a)_j.
\end{align*}
\noindent For $k \ge 2$ and $q_k < j < n_k$, by the above inequality applied at $q_k$ we also have
\begin{align*}
j(\xi^{(p)}_a)_j &= q_k(\xi^{(p)}_a)_{q_k} + \sum_{i=q_k+1}^j \frac{log^p i}{i}
\geq  \frac {C_p}{2^{p+2}}q_k((\omega log^p)_a)_{q_k} +\sum_{i=q_k+1}^j \frac{log^p i}{i}\\
&\ge \frac {C_p}{2^{p+2}} \left(\sum_{i=1}^{q_k} \frac{log^pi}{i} +
\sum_{i=q_k+1}^j \frac{log^p i}{i} \right)
  = \frac {C_p}{2^{p+2}}j((\omega log^p)_a)_j.
\end{align*}
Thus $(\omega log^p)_a \le \frac{2^{p+2}}{{C_p}} \xi^{(p)}_a$  on
$[n_1, \infty)$, which proves
that $(\omega)_{a^{p+1}} = (\xi^{(p)})_a$.
Finally,
\[
(\frac{\xi^{(1)}+\cdots+\xi^{(p)}}{\omega})_{n_k}
= (\frac{\xi^{(1)}+\cdots+\xi^{(p-1)}}{\omega})_{n_k}
+ (\frac{\xi^{(p)}}{\omega})_{n_k} =
(\frac{\xi^{(1)}+\cdots+\xi^{(p-1)}}{\omega})_{n_k} + \frac{1}{k+1} \to 0.
\]
Thus $\omega \ne O((\xi^{(1)}+\cdots+\xi^{(p)}))$ and since $\omega$
satisfies the $\Delta_{1/2}$-condition,
it follows that
$(\omega) \not\subset (\xi^{(1)}+\cdots+\xi^{(p)})$.
This completes the inductive construction of the sequence of
sequences $\xi^{(p)}$.

Now let $L$ be the ideal generated by the sequences $\xi^{(p)}$.
Since $L$ is the union of the principal ideals
$(\xi^{(1)}+\cdots+\xi^{(p)})$, by construction $(\omega) \not\subset
L$.
Since $L_a$ is the ideal generated by $\{\xi^{(p)}_a \mid p\in
\mathbb{N}\}$, and since
  $(\xi^{(p)})_a = (\omega_{a^{p+1}})$,
$L_a$ coincides with $st^a((\omega)) = \bigcup_{p=0}^\infty
(\omega)_{a^p} $, the upper am-stabilizer  of the principal ideal $(\omega)$
(see Section 2).
Thus $L_a$ is am-stable and so $L_a =\, _a(L_a) = L^-$.
Also $L_a$ contains $(\omega)$ while $L$ does not, so $L_a \neq L$, i.e., $L$ is not
am-stable.

To obtain equality with $L^{oo}$, we first show that $(\xi^{(p)})^{oo} = (\omega)_{a^p}$ for every $p$.
Because $(\xi^{(p)})\subset (\omega log^p) = (\omega)_{a^p}$ which is am-open, by definition $(\xi^{(p)})^{oo}\subset (\omega log^p)$.
By \cite[Remark 2.19]{vKgW04-Soft}, $(\xi^{(p)})^{oo}$ is principal.
Choose as a generator of it some $\rho_a \geq  \xi^{(p)}$ (see \cite[Lemma 2.13]{vKgW04-Soft}).
For  every $k\ge 1$ and  $j \in (q_k, n_k)$ one has $j(\rho_a)_j \geq j\xi^{(p)}_j = log^pj= j(\omega log^p)_j $ from the definition of the sequence $\xi^{(p)}$.
For every $j\in [n_k, q_{k+1}]$, by using the last inequality at $n_k-1$,
\begin{align*}
j(\rho_a)_j &\geq (n_k-1)(\rho_a)_{(n_k-1)}
 \geq log^p(n_k-1)
 \geq log^p(\sqrt{q_{k+1}}-1)\\
&\geq log^p(\sqrt{j}-1)
\geq log^p(j^{1/4}) = 4^{-p} j(\omega log^p)_j.
\end{align*}
The last inequality holds because $j \ge n_k \ge (\frac{1+\sqrt 5}{2})^4$.
Thus $\omega log^p = O(\rho_a)$ and hence $(\xi^{(p)})^{oo} = (\omega log^p)$.
By \cite[Lemma 2.22(i) and (v)]{vKgW04-Soft},
\begin{align*}
L^{oo} &= \bigl (\bigcup_p \left
((\xi^{(1)})+\cdots+(\xi^{(p)})\right )\bigr)^{oo}
= \bigcup_p \left ((\xi^{(1)})+\cdots+(\xi^{(p)})\right )^{oo}\\
&=  \bigcup_p \left ((\xi^{(1)})^{oo}+\cdots+(\xi^{(p)})^{oo}\right )
=\bigcup_p (\omega log^p) = st^a((\omega)).
\end{align*}
\end{proof}

\section{\leftline {First order cancellation properties}} \label{S:6}
\noindent In this section we use sublattice density to study first order arithmetic mean ideals' cancellation properties for inclusions.
Several of these results were announced in \cite{vKgW02}.
We start with the elementary equivalences.

\begin{lemma}\label{L:closed open} Let $I$ be an ideal.
\item[(C)]  The following conditions are equivalent.\\
\indent (i) $I$ is am-closed \\
\indent (ii) $J_a\subset I_a \Rightarrow J\subset I$
\indent (ii\,$'$) $J^-\subset I^-\Rightarrow J\subset I$\\
\indent (iii) $J_a = I_a\Rightarrow J\subset  I$
\indent (iii\,$'$) $J^-= I^-\Rightarrow J\subset  I$\\
\indent (iv) $I_-\subset J_-\Rightarrow I\subset J$\\
\indent (v) $I_-= J_-\Rightarrow I\subset  J$

\item[(O)]  The following conditions are equivalent.\\
\indent (i) $I$ is am-open \\
\indent (ii) $_aI\subset \,_aJ\Rightarrow I\subset J$
\indent (ii\,$'$) $I^o\subset \,J^o\Rightarrow I\subset J$ \\
\indent (iii) $_aI = \,_aJ\Rightarrow I\subset J$
\indent (iii\,$'$) $I^o=J^o\Rightarrow I\subset  J$ \\
\indent (iv) $J^{oo}\subset  I^{oo}\Rightarrow J\subset I$\\
\indent (v) $J^{oo}= I^{oo}\Rightarrow J\subset  I$

\item[(C$_{\infty}$)]  The following conditions are equivalent.\\
\indent (i) $I$ is am-$\infty$ closed \\
\indent (ii) $J \subset \mathscr{L}_1$ and $J_{a_{\infty}}\subset
I_{a_{\infty}}\Rightarrow J\subset I \subset \mathscr{L}_1$
\indent (ii$'$) $J \subset \mathscr{L}_1$ and $J^{-{\infty}}\subset
I^{-{\infty}}\Rightarrow J\subset I \subset \mathscr{L}_1$ \\
\indent (iii) $J \subset \mathscr{L}_1$ and $J_{a_\infty}=
I_{a_\infty}\Rightarrow J\subset I \subset \mathscr{L}_1$
\indent (iii$'$) $J\subset \mathscr{L}_1$ and $J^{-{\infty}}=
I^{-{\infty}}\Rightarrow J\subset I \subset \mathscr{L}_1$\\
\indent (iv) $I_{-{\infty}}\subset
J_{-{\infty}}\Rightarrow I\subset J$ \\
\indent (v) $I_{-{\infty}}=
J_{-{\infty}}\Rightarrow I\subset J$

\item[(O$_{\infty}$)] The following conditions are equivalent.\\
\indent (i) $I$ is am-$\infty$ open \\
\indent (ii) $_{a_{\infty}}I\subset\, _{a_{\infty}}J\Rightarrow I\subset J$
\indent (ii$'$) $I^{o\infty} \subset J^{o\infty}\Rightarrow I \subset  J$ \\
\indent (iii) $_{a_{\infty}}I=\, _{a_{\infty}}J\Rightarrow I\subset J$
\indent (iii$'$) $I^{o\infty} = J^{o\infty}\Rightarrow I \subset  J$ \\
\indent (iv)  $J \subset se (\omega)$ and $J^{oo\infty}\subset I^{oo\infty}\Rightarrow J\subset I\subset se (\omega)$\\
\indent (v)  $J \subset se (\omega)$ and $J^{oo\infty}= I^{oo\infty}\Rightarrow J\subset  I \subset se (\omega)$

\end{lemma}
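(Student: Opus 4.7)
The plan is to handle all four parts (C), (O), (C$_\infty$), (O$_\infty$) in the same uniform way, so I will spell out the scheme only for (C); the other three are completely parallel, modulo the side conditions on summability.

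For (C) I would prove the cycle (i)\,$\Rightarrow$\,(ii)\,$\Rightarrow$\,(iii)\,$\Rightarrow$\,(i), then separately (i)\,$\Rightarrow$\,(iv)\,$\Rightarrow$\,(v)\,$\Rightarrow$\,(i), and deduce the primed equivalences (ii)\,$\Leftrightarrow$\,(ii$'$), (iii)\,$\Leftrightarrow$\,(iii$'$) from monotonicity together with the identity $(I^-)_a=I_a$ (which itself follows from $I_a=({}_a(I_a))_a$). Concretely: if $I$ is am-closed and $J_a\subset I_a$, monotonicity of ${}_a(\,\cdot\,)$ yields $J\subset J^-={}_a(J_a)\subset{}_a(I_a)=I^-=I$, giving (i)\,$\Rightarrow$\,(ii); (ii)\,$\Rightarrow$\,(iii) is trivial; and (iii)\,$\Rightarrow$\,(i) comes by testing at $J=I^-$, for which $J_a=(I^-)_a=I_a$ forces $I^-\subset I$. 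For the (iv)/(v)/(i) triangle one uses that $I$ is am-closed iff $I=I_-$ and that $I_-$ is idempotent under $(\,\cdot\,)_-$: the implications (i)\,$\Rightarrow$\,(iv)\,$\Rightarrow$\,(v) are immediate, and (v)\,$\Rightarrow$\,(i) by taking $J=I_-$, so $J_-=(I_-)_-=I_-$ and hence $I\subset J=I_-$.

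Part (O) is dual, replacing ${}_a(I_a)=I^-$ and $(I^-)_a=I_a$ by $({}_aI)_a=I^o$ and ${}_a(I^o)={}_aI$, and using that $I$ is am-open iff $I=I^{oo}$ with $I^{oo}$ idempotent; the test cases for the ``hard'' directions are $J=I^o$ and $J=I^{oo}$ respectively. Parts (C$_\infty$) and (O$_\infty$) repeat the same argument using the identities $I_{a_\infty}=({}_{a_\infty}(I_{a_\infty}))_{a_\infty}$ and ${}_{a_\infty}I={}_{a_\infty}(({}_{a_\infty}I)_{a_\infty})$ listed in Section~\ref{S:2}, together with the idempotency of $(\,\cdot\,)_{-\infty}$ and $(\,\cdot\,)^{oo\infty}$.

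The main subtlety, and thus the only place one must be careful rather than mechanical, is in the am-$\infty$ parts, where the definition of $I_{a_\infty}$ is driven by $I\cap\mathscr L_1$ and the natural ambient ideal for $(\,\cdot\,)^{oo\infty}$ is $se(\omega)$. This is exactly why the statements of (C$_\infty$)(ii)--(iii$'$) and (O$_\infty$)(iv)--(v) carry the explicit side hypotheses $J\subset\mathscr L_1$ and $J\subset se(\omega)$: without them, an ideal $J$ with $J\cap\mathscr L_1\subsetneq J$ could satisfy $J_{a_\infty}\subset I_{a_\infty}$ while $J\not\subset I$ simply because $J_{a_\infty}$ forgets the part of $J$ outside $\mathscr L_1$, and similarly for $J^{oo\infty}$ and $se(\omega)$. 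I will verify that, with these hypotheses in force, the test cases $J=I^{-\infty}\subset\mathscr L_1$ (which is automatic) and $J=I^{oo\infty}\subset se(\omega)$ (likewise automatic) still close the implications (iii)\,$\Rightarrow$\,(i) and (v)\,$\Rightarrow$\,(i); tracking these inclusions is the only real bookkeeping in the proof.
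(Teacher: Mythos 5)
Your proposal is correct and follows essentially the same route as the paper: the primed/unprimed conditions are identified via monotonicity and the identities $L_a=({}_a(L_a))_a$, ${}_aL={}_a(({}_aL)_a)$ (and their am-$\infty$ analogues), the trivial implications (ii)$\Rightarrow$(iii), (iv)$\Rightarrow$(v) are noted, and the remaining directions are closed by testing $J$ at the canonical derived ideals $I^-$, $I_-$, $I^o$, $I^{oo}$ (resp.\ $I^{-\infty}$, $I_{-\infty}$, $I^{o\infty}$, $I^{oo\infty}$) together with their idempotency, with the same attention to the $\mathscr L_1$ and $se(\omega)$ side conditions that the paper handles in its (C$_\infty$)/(O$_\infty$) arguments.
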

\begin{proof}
Conditions (x) and (x$'$) are all equivalent by the monotonicity (i.e., inclusion preserving property) of the operations 
$L \mapsto L_a$, $L \mapsto\, _aL$, $L \mapsto L_{a_{\infty}}$, and  $L \mapsto \,_{a_{\infty}}L$ 
and by the indentities $L^-= (\,_a(L_a))_a$, $L^o= \,_a((_aL)_a)$,  $L^{-{\infty}}= (\,_{a_{\infty}}(L_{a_{\infty}}))_{a_{\infty}}$, 
and $L^{o{\infty}}= \,_{a_{\infty}}((_{a_{\infty}}L)_{a_{\infty}})$ (see Section \ref{S:2}). 
Also, condition (ii) always trivially implies condition (iii) and condition (iv) always trivially implies condition (v). 
The am-case being similar but simpler because of the 5-chains of inclusions, we prove only the am-$\infty$ case.
\item [(C$_{\infty}$)] Recall that $_{a_{\infty}}I \subset I_{-{\infty}}\subset I$ and $I \cap \mathscr{L}_1 \subset I^{-{\infty}}\subset I_{a_{\infty}}\cap \mathscr{L}_1$ for every ideal $L$ (see paragraph preceding Theorem \ref{T: I am-inf stable iff}).\\
(i) $\Rightarrow$ (ii$'$) $I = I^{-{\infty}} \subset \mathscr {L}_1$ and $J = J\cap \mathscr{L}_1 \subset J^{-{\infty}} \subset  I^{-{\infty}} = I$.\\
(iii$'$) $ \Rightarrow $ (i) Since $I^{-{\infty}} = (I^{-{\infty}})^{-{\infty}}$  and $I^{-{\infty}} \subset \mathscr{L}_1$, it follows that $I^{-{\infty}} \subset I \subset \mathscr{L}_1$. But then  $I = I \cap \mathscr{L}_1 = I^{-{\infty}}$ and hence $I$ is am-$\infty$ closed. \\
(i)$\Rightarrow$ (iv) $I= I_{-{\infty}}\subset J_{-{\infty}}\subset J$.\\
(v)$\Rightarrow$ (i) Since $I_{-{\infty}}= (I_{-{\infty}})_{-{\infty}}$ and $I_{-{\infty}}\subset \mathscr{L}_1$, it follows that $I\subset I_{-{\infty}}$ and hence $I= I_{-{\infty}}$, i.e., $I$ is am-$\infty$ closed.
\item [(O$_{\infty}$)]  Recall that $_{a_{\infty}}L \subset L^{o\infty} \subset L$ and 
$L \cap se(\omega) \subset L^{oo\infty} \subset L_{a_{\infty}}\subset se (\omega)$ for every ideal $L$ (see ibid).\\
(i) $\Rightarrow$ (ii$'$) $I= I^{o\infty} \subset J^{o\infty}\subset J$.\\
(iii$'$) $ \Rightarrow $ (i) Since $I^{o\infty} =(I^{o\infty})^{o\infty} $, it follows that $I \subset I^{o\infty} $ so  $I = I^{o\infty} $, i.e., $I$ is am-$\infty$ open. \\
(i)$ \Rightarrow $ (iv) As every am-$\infty$ open ideal, $I \subset se (\omega)$. 
Moreover, $J= J \cap se (\omega) \subset J^{oo\infty}\subset  I^{oo\infty} = I$.
(v)$ \Rightarrow $ (i) Since $I^{oo\infty}=(I^{oo\infty})^{oo\infty}$ and $I^{oo\infty}\subset se(\omega)$, 
it follows that $I^{oo\infty} \subset  I \subset se(\omega)$, hence $I=I \cap  se (\omega) = I^{oo\infty} $, i.e., $I$ is am-$\infty$ open. 
\end{proof}

A reformulation of (C) in the above lemma is that $I$ is not am-closed if and only if there is an ideal $L\not\subset I$ for which $I_a=L_a$, 
in which case $I$ is contained in the strictly larger ideal $J:= I+L$ with the same arithmetic mean as $I$.
In case $I$ is countably generated (resp., principal) the next proposition shows that we can choose that larger ideal $J$ to also be countably generated 
(resp., principal). The same holds for the am-$\infty$ case.

\begin{proposition}\label{P:count gen}
\item[(i)] If $I$ is a countably generated ideal that is not
am-stable, then there is a countably
generated ideal $J \supsetneq I$ such that $J_a = I_a$. If $I$ is principal,
then $J$ can be chosen to be principal.
\item[(ii)] If $I$ is a countably generated ideal that is not
am-$\infty$ stable, then there is a countably
generated ideal $J \supsetneq I$ such that $J_{a_\infty} = I_{a_\infty}$.
If $I$ is principal, then $J$ can be chosen to be principal.
\end{proposition}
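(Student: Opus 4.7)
The strategy is to reduce both parts to the upper density results of Section \ref{S:3}, using the fact from \cite[Theorems 2.11 and 3.5]{vKgW04-Soft} that for countably generated ideals am-stability coincides with am-closure, and likewise for am-$\infty$.

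For part (i), since $I$ is countably generated and not am-stable, the above equivalence gives that $I$ is not am-closed, so $I \subsetneq I^- = \,_a(I_a)$. Corollary \ref{C: count gen upper density}(i) then furnishes a countably generated $J$ with $I \subsetneq J \subsetneq I^-$; when $I$ is principal, Corollary \ref{C: count gen upper density}(ii) yields a principal such $J$. The identity $I_a = (\,_a(I_a))_a = (I^-)_a$ recalled in Section \ref{S:2}, combined with monotonicity of $(\cdot)_a$, then gives $I_a \subset J_a \subset (I^-)_a = I_a$, so $J_a = I_a$.

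For part (ii), the same template works when $I \subset \mathscr{L}_1$: the am-$\infty$ analogue of the equivalence yields $I = I \cap \mathscr{L}_1 \subsetneq I^{-\infty} = \,_{a_\infty}(I_{a_\infty})$, upper density supplies $J$ of the required type with $I \subsetneq J \subsetneq I^{-\infty} \subset \mathscr{L}_1$, and the identity $(I^{-\infty})_{a_\infty} = I_{a_\infty}$ closes the argument. When $I \not\subset \mathscr{L}_1$, some generator $\eta$ of $I$ must be non-summable, and I plan to show by a short direct estimate---comparing the summable bound $n\zeta_n \le \|\zeta\|_1$ with the divergence of the partial sums of $\eta$ to control $\eta_{\lceil n/m\rceil}$ from below for a suitable $m$---that every summable monotone $\zeta$ satisfies $\zeta \le C D_m \eta$ for some $C, m$. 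Hence $\mathscr{L}_1 \subset (\eta) \subset I$, so $I \cap \mathscr{L}_1 = \mathscr{L}_1$. Because a countably generated ideal is properly contained in $K(H)$, there is $\xi \in c_o^* \setminus \Sigma(I)$; setting $J := I + (\xi)$ (which coincides with $(\eta+\xi)$ when $I = (\eta)$ is principal) produces an ideal properly containing $I$, and since $\mathscr{L}_1 \subset J$ also $J \cap \mathscr{L}_1 = \mathscr{L}_1 = I \cap \mathscr{L}_1$. As both $J_{a_\infty}$ and $I_{a_\infty}$ depend only on the trace-class part of the ideal, it follows that $J_{a_\infty} = I_{a_\infty}$.

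The main obstacle is the auxiliary observation that non-summable monotone sequences generate principal ideals absorbing $\mathscr{L}_1$, which is the pivotal step for the case $I \not\subset \mathscr{L}_1$ in part (ii); the rest of the proof is a direct application of the Section \ref{S:3} density theorems together with the built-in identities of the am- and am-$\infty$ operations recalled in Section \ref{S:2}.
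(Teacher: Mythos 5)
Your part (i) and the case $I \subset \mathscr{L}_1$ of part (ii) are correct and essentially coincide with the paper's proof: countable generation plus failure of am-stability (resp., am-$\infty$ stability) gives failure of am-closedness (resp., am-$\infty$ closedness), the upper density of $\mathscr{L}_{\aleph_o}$ and $\mathscr{PL}$ interpolates $I \subsetneq J \subsetneq I^-$ (resp., $I \subsetneq J \subsetneq I^{-\infty}$), and the identities $(I^-)_a = I_a$, $(I^{-\infty})_{a_\infty} = I_{a_\infty}$ together with monotonicity finish. Your instinct that the interpolation needs $I \subset \mathscr{L}_1$ (otherwise $I \subset I^{-\infty}$ can fail) is sound; the paper runs the interpolation without separating that case.

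The genuine gap is in your treatment of the case $I \not\subset \mathscr{L}_1$: the auxiliary claim that a non-summable $\eta \in c_o^*$ satisfies $\zeta \le C D_m\eta$ for every summable monotone $\zeta$ (equivalently, $\mathscr{L}_1 \subset (\eta)$ whenever $\eta \notin \ell^1$) is false, and no estimate comparing $n\zeta_n \le \|\zeta\|_1$ with the divergence of $\sum\eta_i$ can prove it, because divergence gives no pointwise lower bound on $\eta$. Concretely, set $\eta_i := \epsilon_k$ for $i \in (M_{k-1}, M_k]$, choosing inductively $\epsilon_{k+1} \le \min\bigl(\epsilon_k, \tfrac{2^{-(k+1)M_k}}{k+1}\bigr)$ and then $M_{k+1}$ so large that $\epsilon_{k+1}(M_{k+1}-M_k) \ge 1$. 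Then $\eta \notin \ell^1$, but for $\zeta = \langle 2^{-n}\rangle \in \ell^1$, any $m$, and any $k \ge m$, at $n=(k+1)M_k$ one has $\lceil n/m\rceil > M_k$, hence $(D_m\eta)_n \le \epsilon_{k+1} \le \zeta_n/(k+1)$; so $\zeta \ne O(D_m\eta)$ for every $m$ and $\mathscr{L}_1 \not\subset (\eta)$. Consequently you cannot conclude $I \cap \mathscr{L}_1 = \mathscr{L}_1$, and your $J = I + (\xi)$ with an arbitrary $\xi \notin \Sigma(I)$ need not have the same trace-class part as $I$ (when $\mathscr{L}_1 \not\subset I$ one could even take $\xi$ summable), so the asserted equality $J_{a_\infty} = I_{a_\infty}$ is not established. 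Your argument does work in the subcase $\mathscr{L}_1 \subset I$, where $J \cap \mathscr{L}_1 = \mathscr{L}_1 = I \cap \mathscr{L}_1$ automatically, but it proves nothing for countably generated ideals incomparable with $\mathscr{L}_1$, such as the principal ideal $(\eta)$ just constructed; that case needs either the paper's route through the equivalence of am-$\infty$ stability and am-$\infty$ closedness or a genuinely new argument.
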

\begin{proof}
By \cite[Corollaries 2.9 and 3.5]{vKgW04-Soft}, if $I$ is countably
generated and not am-stable (resp., am-$\infty$ stable,)
then it is not am-closed (resp., am-$\infty$ closed).
By the upper density of $\mathscr {L}_{\aleph_o}$ and
$\mathscr {PL}$ in $\mathscr L$, there is a $J \in \mathscr
{L}_{\aleph_o}$  such that $I\subsetneq J\subsetneq I^-$
(resp.,  $I\subsetneq J\subsetneq I^{-_{\infty}}$), and if $I\in
\mathscr{PL}$ then $J$ can be chosen to be principal.
But then $I_a\subset J_a\subset (I^-)_a=I_a$ (resp.,
$I_{a_{\infty}}\subset J_{a_{\infty}}\subset
(I^-)_{a_{\infty}}=I_{a_{\infty}}$).
\end{proof}
The answer is different for the reverse inclusion question: If $I$ is not
am-stable, can we always find a smaller ideal $J \subsetneq I$ such
that $J_a=I_a$?
The next example shows that the answer is negative even when $I$
is principal and that the same holds for the am-$\infty$ case.

\begin{example}\label{E:lower KD}
\item[(i)]
Let $\xi_i:=\frac{1}{k!}$ for $((k-1)!)^2 <i\leq (k!)^2$ and all
$k\geq 2$ and let $\xi_1=1$.
Then the principal ideal $(\xi)$ is not am-stable and $J\subsetneq
(\xi)$ implies $J_a \neq (\xi)_a$.
\item[(ii)] Let $\xi_i :=\frac{1}{(k!)^2}$ for $(k-1)! < i \leq k!$
and all $k\geq 2$ and let $\xi_1=1$.
Then the principal ideal $(\xi)$ is not am-$\infty$ stable
and $J\subsetneq (\xi)$ implies $J_{a_{\infty}}\neq (\xi)_{a_{\infty}}$.
\end{example}
\begin{proof}
\item[(i)]
It is easy to verify that $\xi \in \co*$ and that $\xi$ does not
satisfy the $\Delta_{1/2}$-condition
  and hence is not am-stable.
Now assume by contradiction that there is an ideal $J\subsetneq (\xi)$
with $J_a = (\xi)_a$.
Then $(\xi)_a = (\eta)_a$ for some $(\eta)\subset J$, that is, $J$
can also be chosen to be principal.
Without loss of generality, since $\xi$ and every $MD_m\xi$
for $M>0$ and $m\in \mathbb N$, generate the same ideal,
we can assume that $\eta \le \xi$. Since $(\eta)\ne(\xi)$, $\xi \ne O(D_j\eta)$ for every $j\in \mathbb N$, i.e.,
$\xi_{r_j} \ge j \eta_{\lceil \frac{r_j}{j}\rceil}$ for some strictly
increasing sequence of integers $r_j$.
Furthermore, we can assume that the intervals $((k_j-1)!)^2 < r_j \leq (k_j!)^2$ containing distinct $r_j$
are disjoint, that is, $k_j$ is strictly increasing.
Then $\eta_{\frac{(k_j!)^2}{j}} \leq
\eta_{\lceil \frac{r_j}{j}\rceil}\leq \frac{1}{j}\xi_{r_j}
=\frac{1}{j(k_j!)}$ and since $\sum_1^k n! \le 2k!$,
\begin{align*}
(k_j&!)^2(\eta_a)_{(k_j!)^2} = \sum_{i=1}^{(k_j!)^2}\eta_i
\leq
\sum_{i=1}^{\frac{(k_j!)^2}{j}}\xi_i+\sum_{i=\frac{(k_j!)^2}{j}+1}^{(k_j!)^2}\eta_{\frac{(k_j!)^2}{j}}\\
&= 1+\sum_{n=2}^{k_j-1}\frac{(n!)^2-((n-1)!)^2}{n!}
+\left (\frac{(k_j!)^2}{j}-((k_j-1)!)^2 \right)\frac{1}{k_j!}
+\left ( (k_j!)^2-\frac{(k_j!)^2}{j}\right )\eta_{\frac{(k_j!)^2}{j}}\\
&\leq 2(k_j-1)! + \frac{k_j!}{j}+ \frac{k_j!}{j}\leq 4
\frac{k_j!}{j}= \frac{4}{j}(k_j!)^2\xi_{(k_j!)^2}
\leq \frac{4}{j}(k_j!)^2(\xi_a)_{(k_j!)^2}.
\end{align*}
But then $\xi_a \neq O(\eta_a)$, and hence $(\eta_a)\neq (\xi_a)$, a
contradiction.
\item[(ii)] Since $\sum_{k=1}^{\infty}\frac{1}{(k!)^2}(k!-(k-1)!)<\infty$, by \cite[Example 4.5(iii), Corollary 4.10 and Definition 4.11]{vKgW04-Traces}, $(\xi)$ is not am-$\infty$ stable.
As in the proof of part (i), reasoning by contradiction we can assume
that $(\eta)\subsetneq (\xi)$ but
$(\eta_{a_{\infty}}) = (\xi_{a_{\infty}})$ for some $\eta \in (\ell^1)^*$
with $\eta \le \xi$, and we can choose
a sequence $(k_j-1)! < r_j \le k_j!$ with strictly increasing $k_j$ for which
$\xi_{r_j} \ge j \eta_{\lceil \frac{r_j}{j}\rceil}$.
Then $j\eta_{\frac{k_j!}{j}} \le  j \eta_{\lceil \frac{r_j}{j}\rceil} \le \xi_{r_j}
=\xi_{k_j!}=\frac{1}{(k_j!)^2} $ and since $\sum _{k}^\infty \frac{1}{n!} \leq \frac{e}{k!}$,
\begin{align*}
{k_{j^2}!}(\eta_{a_{\infty}})_{\frac{k_{j^2}!}{j^2}}
&=j^2\biggl( \sum _{i=\frac{k_{j^2}!}{j^2}+1}^{k_{j^2}!}\eta_i +
\sum _{i=k_{j^2}!+1}^{\infty}\eta_i\biggr)
\le j^2\biggl(k_{j^2}! \eta_\frac{k_{j^2}!}{j^2} + \sum
_{n=k_{j^2}+1}^{\infty}\frac{n!-(n-1)!}{n!^2}\biggr)\\
&\le j^2\biggl(\frac{1}{j^2k_{j^2}!} + \sum _{n=k_{j^2}+1}^{\infty}\frac{1}{n!}\biggr)
\le j^2\left(\frac{1}{j^2k_{j^2}!}+ \frac{e}{(k_{j^2}+1)!}\right)
  \le \frac{4}{k_{j^2}!}.
\end{align*}
On the other hand, for $j \ge 2$,
\begin{align*}
{k_{j^2}!}(\xi_{a_{\infty}})_{\frac{k_{j^2}!}{j}}
&\ge j \sum _{i=\frac{k_{j^2}!}{j}+1}^{k_{j^2}!}\xi_i
= j (k_{j^2}! - \frac{k_{j^2}!}{j})\frac{1}{(k_{j^2}!)^2}
  \ge \frac{j}{2k_{j^2}!}.
\end{align*}
Therefore $(\xi_{a_{\infty}})_{\frac{k_{j^2}!}{j}} \ge \frac{j}{8}(\eta_{a_{\infty}})_{\frac{k_{j^2}!}{j}}$ and hence
$\xi_{a_{\infty}}\ne O(D_m\eta_{a_{\infty}})$ for any
integer $m$. Thus
$(\eta_{a_{\infty}}) \neq (\xi_{a_{\infty}})$.
As in part (i), this concludes the proof.
\end{proof}

\begin{remark}\label{R:1} That $(\eta)_a \supset (\xi)$ implies
$(\eta)_a \supset (\xi)_a$
for the sequence $\xi$ of part (i) of the above example
(or equivalently, that $(\xi)^{oo}=(\xi)_a$)
was proved in  \cite [Example 2.20]{vKgW04-Soft}.
This combined with Example \ref{E:lower KD}(i) shows that
if $(\eta) \subset (\xi)\subset (\eta)_a$, then $(\eta) = (\xi)$.
That is, $(\xi)$ cannot lie strictly between
any principal ideal and its arithmetic mean.
\end{remark}
At an early stage of this project, Ken Davidson and the second named
author found, for the case of principal ideals,
a direct constructive proof of Proposition \ref{P:count gen}(i) or rather,
equivalently, that there is a principal ideal
$J\supsetneq I$ with $J^-=I^-$. 
The same result was obtained earlier by Allen and Shen \cite{gAlS78} by different methods. 
The proof that we presented here is a
consequence of the identity $I_-=\,_aI$ for countably generated ideals
  (see \cite [Theorem 2.9]{vKgW04-Soft}). \\

To handle the am-interior
(resp., am-$\infty$ interior) analogs of Proposition \ref{P:count gen} for principal ideals, a direct constructive proof seems needed. 
This we present in the next proposition.
\begin{proposition}\label{P:open'}
\item[(i)]
Let  $(\rho)\ne\{0\}$ be a principal ideal that is not am-stable.
Then there are two principal ideals $(\eta^{(1)})$ and $(\eta^{(2)})$
with $(\eta^{(1)})$ possibly zero, such that
\[
(\eta^{(1)}) \subsetneq (\rho) \subsetneq (\eta^{(2)}),\quad
(\eta^{(1)})^{oo} = (\rho)^o, \quad \text{and} \quad
(\rho)^{oo}=(\eta^{(2)})^o.
\]
\item[(ii)] Let  $(\rho)\ne\{0\}$ be a principal ideal that is not am-$\infty$ stable.
Then there are two principal ideals $(\eta^{(1)})$ and $(\eta^{(2)})$ such that
\[
(\eta^{(1)}) \subsetneq (\rho) \subsetneq (\eta^{(2)}), \quad
(\eta^{(1)})^{oo\infty} = (\rho)^{o\infty}, \quad \text{and} \quad
(\rho)^{oo\infty}=(\eta^{(2)})^{o\infty}.
\]
\end{proposition}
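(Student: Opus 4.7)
I would treat both parts by the same strategy, splitting each into a \emph{reduction subcase} and a \emph{hard subcase} according to whether $(\rho)$ is am-open (resp., am-$\infty$ open). In the reduction subcase, the 5-chain of inclusions yields $(\rho)^o \subsetneq (\rho) \subsetneq (\rho)^{oo}$, and by \cite[Lemma 2.13]{vKgW04-Soft} both ends are principal, so I would take $\eta^{(1)}$ to be a generator of $(\rho)^o$ (with the convention $\eta^{(1)}=0$ when $(\rho)^o=\{0\}$) and $\eta^{(2)}$ to be a generator of $(\rho)^{oo}$; the identities $(\eta^{(1)})^{oo}=(\rho)^o$ and $(\eta^{(2)})^o=(\rho)^{oo}$ then hold automatically because both targets are already am-open. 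The parallel reduction for (ii) uses the principality of $(\rho)^{o\infty}$ and $(\rho)^{oo\infty}$ from \cite[Lemmas 3.9 and 3.16]{vKgW04-Soft}.

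\textbf{Hard subcase for (i).} Here $(\rho)=(\rho)^o=(\rho)^{oo}$ is am-open but not am-stable, so I can write $(\rho)=(\mu_a)$ for some $\mu\in\co*$; non-am-stability then gives $\mu_{a^2}\ne O(D_m\mu_a)$ for every $m$. For $\eta^{(2)}$, I would inductively pick a sparse sequence of indices $n_k$ along which $\mu_{a^2}(n_k)/\mu_a(n_k)$ grows, set bump heights $c_k:=k\mu_a(n_k)$, let $q_k<n_k$ be the largest index with $\mu_a(q_k)\ge c_k$, and choose the $n_k$ to grow fast enough that the intervals $[q_k{+}1,n_k]$ are pairwise disjoint and well separated. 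Then I define $\eta^{(2)}_i := c_k$ on each $[q_k{+}1,n_k]$ and $\eta^{(2)}_i := \mu_a(i)$ elsewhere; monotonicity is built in. Using that every arithmetic mean satisfies the $\Delta_{1/2}$-condition (Section \ref{S:2}), I would show $c_k/(D_m\mu_a)_{n_k}=k\mu_a(n_k)/\mu_a(\lceil n_k/m\rceil)$ grows like $k/M^{\log_2 m}$, giving $(\rho)\subsetneq(\eta^{(2)})$. The dual construction for $\eta^{(1)}$ replaces $\mu_a$ on sparse ``valley'' intervals by much smaller values so that $\mu_a\ne O(D_m\eta^{(1)})$, while arranging that the smallest am-open hull of $\eta^{(1)}$ is still all of $(\mu_a)$.

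\textbf{The main obstacle} is verifying $(\eta^{(2)})^o=(\rho)$, and dually $(\eta^{(1)})^{oo}=(\rho)$. For the upper case, given $\xi_a\le CD_m\eta^{(2)}$ I must prove $\xi_a=O(D_{k'}\mu_a)$ for some $k',C'$ independent of the bump index $k$. Off the bumps, $D_m\eta^{(2)}$ coincides with $D_m\mu_a$ and nothing is to prove. On the $k$-th bump region, the monotonicity of $\xi_a$ combined with the boundary bound $(\xi_a)_{mq_k}\le C\mu_a(q_k)$ (obtained at the left edge of the bump, where $D_m\eta^{(2)}=D_m\mu_a$) yields the required estimate provided $k'$ can be chosen uniformly in $k$ so that $k' q_k\ge n_k$. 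This last control comes from iterating the $\Delta_{1/2}$-inequality and using the defining equality $\mu_a(q_k)=c_k=k\mu_a(n_k)$, which forces $n_k/q_k\le k^{1/\log_2 M}$. Balancing bump height (large enough to escape $(\rho)$) against bump width and sparsity (small enough to be smoothed away by any arithmetic mean) is the delicate quantitative core of the construction.

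\textbf{Part (ii)} will proceed along identical lines, with the $\Delta_{1/2}$-inequality replaced by the Potter-type inequality for $\infty$-regular sequences from \cite[Theorem 4.12]{vKgW04-Traces}. The additional subtlety, as already encountered in the proof of Theorem \ref{T: SinftyPL density}, is that $\mu_{a_\infty}$ need not satisfy the $\Delta_{1/2}$-condition (\cite[Example 4.5(ii)]{vKgW04-Traces}), so all comparisons must be carried through in the $D_m$-form rather than pointwise, and the tail sums defining the am-$\infty$ operations replace the partial sums used above.
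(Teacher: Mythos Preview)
Your reduction subcase for (i) matches the paper. The hard subcase, however, has a genuine gap in the step you yourself flag as the main obstacle. You claim that the $\Delta_{1/2}$-inequality for $\mu_a$ together with $\mu_a(q_k)=k\mu_a(n_k)$ forces $n_k/q_k\le k^{1/\log_2 M}$. The $\Delta_{1/2}$-condition gives only $\mu_a(q_k)/\mu_a(n_k)\le (n_k/q_k)^{\log_2 M}$, i.e.\ a \emph{lower} bound $n_k/q_k\ge k^{1/\log_2 M}$; in fact the concavity of arithmetic means ($j\mu_a(j)$ nondecreasing) forces $q_k\cdot k\mu_a(n_k)\le q_k\mu_a(q_k)\le n_k\mu_a(n_k)$, so $n_k/q_k\ge k$. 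Hence no uniform $k'$ with $k'q_k\ge n_k$ exists, and your verification of $(\eta^{(2)})^o=(\rho)$ collapses. The difficulty is structural: working only with the irregularity of $\mu_a$ (i.e.\ $\mu_{a^2}\ne O(\mu_a)$) you cannot couple bump height to bump width in the way you need.

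The paper's key idea is to go one level down: write $(\rho)=(\xi_a)$ and use that $\xi$ itself is irregular, so there exist $n_k$ with $(\xi_a)_{n_k}\ge k\xi_{n_k}$. This yields the crucial partial-sum estimate $kn_k(\xi_a)_{kn_k}=\sum_1^{n_k}\xi_i+\sum_{n_k+1}^{kn_k}\xi_i\le n_k(\xi_a)_{n_k}+kn_k\xi_{n_k}\le 2n_k(\xi_a)_{n_k}$, which says precisely that on the prescribed interval $[n_k,kn_k]$ the width ratio $k$ matches the height ratio of $\xi_a$. With the bumps placed on these specific intervals and with $\eta^{(2)}_j=(\xi_a)_{n_k}$, $\eta^{(1)}_j=(\xi_a)_{kn_k}$ there, the verification uses only the monotonicity of $j\mapsto j(\nu_a)_j$: for any $\nu_a\le\eta^{(2)}$ one gets $j(\nu_a)_j\le (kn_k{+}1)(\xi_a)_{kn_k+1}\le 4n_k(\xi_a)_{n_k}\le 4j(\xi_a)_j$ on $[n_k,kn_k]$, with no $D_m$ bookkeeping at all.

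A smaller point on (ii): your reduction subcase is incomplete. The ideals $(\rho)^{o\infty}$ and $(\rho)^{oo\infty}$ need not be principal---they equal $se(\omega)$ precisely when $(\omega)\subset(\rho)$, respectively $(\rho)\not\subset se(\omega)$---so you must handle these boundary cases separately before the choice $\eta^{(1)}=(\rho)^{o\infty}$, $\eta^{(2)}=(\rho)^{oo\infty}$ is available.
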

\begin{proof}
\item[(i)]
By \cite[Lemma 2.14]{vKgW04-Soft}, $(\rho)^o$ and  $(\rho)^{oo}$  are principal. Also  $(\rho)^o=\{0\}$ precisely when $(\omega) \not \subset (\rho)$.
If  $(\rho)$ is not am-open and hence  $(\rho)^o \subsetneq (\rho)\subsetneq(\rho)^{oo}$,
then choosing $(\eta^{(1)}) :=  (\rho)^o$ and $(\eta^{(2)}) := (\rho)^{oo}$ satisfies the requirement.
Thus assume that $(\rho)$ is am-open and hence $(\rho)^o = (\rho)^{oo}  = (\rho)$.
By \cite[Lemma 2.13]{vKgW04-Soft} $\rho$ is equivalent to $\xi_a$ for some  $\xi \in \co*$.
Thus  $\xi_a$ is irregular and by \cite[Remark 3.11]{DFWW} (see also Corollary \ref{C: higher regularities}(i)), $\xi$ too is irregular, i.e., $\xi_a \ne O(\xi)$.
Choose an increasing sequence of indices  $n_k$ such that
$(\xi_a)_{n_k} \geq k\xi_{n_k}$ and without loss of
  generality assume that $n_{k+1} > kn_k$. Then
\[
kn_k(\xi_a)_{kn_k} =  \sum_{j=1}^{n_k} \xi_j + \sum_{j=n_k+1}^{kn_k} \xi_j
\leq    n_k(\xi_a)_{n_k}  + kn_k\xi_{n_k} \leq 2n_k(\xi_a)_{n_k}
\]
and hence $(\xi_a)_{n_k} \geq \frac{k}{2}(\xi_a)_{kn_k}$. \\
Define
$\eta_j^{(1)} =
\begin{cases}
(\xi_a)_{kn_k}    &\text{for $n_k \leq j \leq kn_k$} \\
(\xi_a)_j    &\text{for $kn_k \leq j < n_{k+1}$}
\end{cases}$ and
$\eta_j^{(2)} =
\begin{cases}
(\xi_a)_{n_k}    &\text{for $n_k \leq j \leq kn_k$} \\
(\xi_a)_j    &\text{for $kn_k < j \leq n_{k+1}$}.
\end{cases}$\\
Then $\eta^{(i)} \in \co*$  for $i = 1, 2$ and $\eta^{(1)}  \leq
\xi_a \leq  \eta^{(2)}$.
Thus $(\eta^{(1)}) \subset (\xi_a) \subset (\eta^{(2)})$ and since
$(\xi_a)$ is am-open,
$(\eta^{(1)})^{oo} \subset (\xi_a) \subset (\eta^{(2)})^o$.
We need to prove strict containments in the first pair of inclusions
and equalities in the second pair.
Since
\[
\left(\frac{\xi_a}{\eta^{(1)}}\right)_{n_k} = \left(\frac{\eta^{(2)}}{\xi_a}\right)_{kn_k}
= \frac{(\xi_a)_{n_k}}{(\xi_a)_{kn_k}} \geq \frac{k}{2},
\]
i.e.,
$\xi_a \ne O(\eta^{(1)})$ and $\eta^{(2)} \ne O(\xi_a)$, and since
$\xi_a$ satisfies the $\Delta_{1/2}$-condition,
  it follows that $(\eta^{(1)}) \ne (\xi_a) \ne (\eta^{(2)})$.

By \cite[Lemma 2.14]{vKgW04-Soft}, $(\eta^{(1)})^{oo}$ is principal,
i.e., $(\eta^{(1)})^{oo} = (\mu_a)$ for some $\mu \in \co*$ [ibid,
Lemma 2.13].
Since  $(\eta^{(1)})\subset (\mu_a)$ and $\mu_a$ satisfies the
$\Delta_{1/2}$-condition,
by multiplying $\mu$ by a constant if necessary, one can assume
without loss of generality that $\mu_a \geq \eta^{(1)}$.
For all $n_k \leq j \leq kn_k$,
\begin{align*}
j(\mu_a)_j     &\geq      (n_k - 1)(\mu_a)_{n_k-1}
         \geq     (n_k - 1)\eta^{(1)}_{n_k-1}
         =    (n_k - 1)(\xi_a)_{n_k-1} \\
          &\geq     (n_k - 1)(\xi_a)_{n_k}         \geq
\frac{k}{2}(n_k - 1)(\xi_a)_{kn_k}
         \geq    \frac{1}{4} kn_k(\xi_a)_{kn_k}
          \geq    \frac{1}{4} j(\xi_a)_j.
\end{align*}
For all $kn_k \leq j < n_{k+1}$ we have $j(\mu_a)_j  \geq
j\eta^{(1)}_j = j(\xi_a)_j$.
Thus $\xi_a=O(\mu_a)$ and hence $(\xi_a)\subset (\mu_a)
= (\eta^{(1)})^{oo}$, whence $(\xi_a) = (\eta^{(1)})^{oo}$.

Similarly there is a  $\nu_a \leq \eta^{(2)}$ such that $(\nu_a) =
(\eta^{(2)})^o$.
Then for all $n_k \leq j \leq kn_k$,
\begin{align*}
j(\nu_a)j      &\leq      (kn_k+1)(\nu_a)_{kn_k+1}         \leq
(kn_k+1)\eta^{(2)}_{kn_k+1}
          =     (kn_k+1)(\xi_a)_{kn_k+1} \\
&\leq     2kn_k(\xi_a)_{kn_k}          \leq     4n_k(\xi_a)_{n_k}
\leq     4 j(\xi_a)_j.
\end{align*}
For all $kn_k < j \leq n_{k+1}$ we have $j(\nu_a)j   \leq
j\eta^{(2)}_j  =  j(\xi_a)_j$.
Thus $\nu_a  = O(\xi_a)$ and hence $(\nu_a)  \subset (\xi_a)$, whence
$(\eta^{(2)})^o =(\xi_a)$.
\item[(ii)] We consider separately the cases when $(\rho)$ is and when it is
not am-$\infty$ open.

First assume that
$(\rho)$ is not am-$\infty$ open and hence
$(\rho)^{o\infty}\subsetneq (\rho) \ne (\rho)^{oo\infty}$.
Recall that by \cite[Lemma 3.9(i)]{vKgW04-Soft}, $(\rho)^{o\infty}$ is not
principal precisely when $(\omega) \subset  (\rho)$,
in which case \linebreak
$(\rho)^{o\infty}= se(\omega)$.
By \cite[Lemmas 3.9(ii) and 3.16(ii)]{vKgW04-Soft},
$(\rho)^{oo\infty}$ is not principal precisely when
$(\rho)\not\subset se(\omega)$, in which case $(\rho)^{oo\infty}= se(\omega)$.
Hence if $(\rho) \subset se(\omega)$, then both $(\rho)^{o\infty}$ and
$(\rho)^{oo\infty}$
are principal and $(\rho)^{o\infty}\subsetneq (\rho)\subsetneq
(\rho)^{oo\infty}$, so it suffices to choose
  $(\eta^{(1)}) = (\rho)^{o\infty}$ and $(\eta^{(2)}) = (\rho)^{oo\infty}$.
And otherwise if $(\rho) \not\subset se(\omega)$, then choose a sequence
$\eta^{(2)}$ sufficiently large so that
  $(\rho+\omega) \subsetneq (\eta^{(2)})$.
Then $(\eta^{(2)})^{o\infty} = se(\omega) = (\rho)^{oo\infty}$.
To obtain $\eta^{(1)}$ in the case when $(\omega) \subset  (\rho)$,
choose $0\neq\eta^{(1)} \leq \omega$ but $\eta^{(1)} \ne o(\omega)$. Then $\omega \ne O(\eta^{(1)})$
hence $(\eta^{(1)}) \subsetneq (\omega)$, and $(\eta^{(1)}) \not\subset se(\omega)$
(which follows since $\omega$ satisfies the $\Delta_{1/2}$-condition).
Then $(\eta^{(1)}) \subsetneq (\rho)$ and $(\eta^{(1)})^{oo\infty} =
se(\omega) =  (\rho)^{o\infty}$.
In the case when $(\omega) \not \subset  (\rho)$, choose
$(\eta^{(1)}) = (\rho)^{o\infty}$ since then the latter ideal is
principal.

Thus, as in part (i), it remains to consider only the case that
$(\rho)$ is am-$\infty$ open
  and hence $(\rho) = (\xi_{a_\infty})$ for some $\xi \in \ell_1^*$
(see \cite[Lemma 3.7]{vKgW04-Soft}).
By \cite[Theorem 4.12(i) and (iv)]{vKgW04-Traces}, since $(\rho)$ is
not am-$\infty$ stable, $\xi$ is am-$\infty$ irregular, and
again by [ibid, Theorem 4.12(i) and (ii)], $\xi_{a_\infty} \ne O(\xi)$.
Thus we can find an increasing sequence of indices  $n_k$ such that
$(\xi_{a_\infty})_{n_k} \geq 2k\xi_{n_k}$,
and without loss of generality we can assume that $n_{k+1} > kn_k$.
Then
\[
n_k(\xi_{a_\infty})_{n_k}    =  \sum_{j=n_k+1}^{kn_k} \xi_j   +
    \sum_{j=kn_k+1}^{\infty} \xi_j  \leq    kn_k\xi_{n_k} +
kn_k(\xi_{a_\infty})_{kn_k}
             \leq   \frac{1}{2} n_k(\xi_{a_\infty})_{n_k}+ kn_k(\xi_{a_\infty})_{kn_k}
\]
and hence $(\xi_{a_\infty})_{n_k} \leq  2k(\xi_{a_\infty})_{kn_k}$.
As for the am-case, define \\
\[
\eta_j^{(1)} =
\begin{cases}
(\xi_{a_\infty})_{kn_k}    &\text{for $n_k \leq j \leq kn_k$} \\
(\xi_{a_\infty})_j    &\text{for $kn_k \leq j < n_{k+1}$}
\end{cases} \quad \text{and} \quad
\eta_j^{(2)} =
\begin{cases}
(\xi_{a_\infty})_{n_k}    &\text{for $n_k \leq j \leq kn_k$} \\
(\xi_{a_\infty})_j    &\text{for $kn_k < j \leq n_{k+1}$}.
\end{cases}
\]
\noindent Then $\eta^{(i)} \in \co*$  for $i = 1, 2$ and
$\eta^{(1)} \leq  \xi_{a_\infty} \leq  \eta^{(2)}$. Thus $(\eta^{(1)}) \subset (\xi_{a_\infty}) \subset (\eta^{(2)})$.
Moreover, since $ \xi_{a_\infty} \in \Sigma(se(\omega))$ it follows that
$(\eta^{(1)}) \subset se(\omega)$, and since $(\frac{\omega}{\eta^{(2)}})_{n_k} = (\frac{\omega}{\xi_{a_\infty}})_{n_k}\rightarrow \infty$,
$(\eta^{(2)}) \not \supset (\omega)$.
Thus  both
$(\eta^{(1)})^{oo\infty}$ and $ (\eta^{(2)})^{o\infty}$ are principal and
$(\eta^{(1)})^{oo\infty} \subset (\xi_{a_\infty}) \subset
(\eta^{(2)})^{o\infty}$ \cite[Lemma 3.9(i)-(ii)]{vKgW04-Soft}.
For $m \in \mathbb N$ and all $k \geq m$ we have
\[
\left(\frac{\xi_{a_\infty}}{D_m\eta^{(1)}}\right)_{kn_{k^2}}
\geq \left(\frac{\xi_{a_\infty}}{D_k\eta^{(1)}}\right)_{kn_{k^2}}
= \frac{\left(\xi_{a_\infty}\right)_{kn_{k^2}}}{\eta^{(1)}_{n_{k^2}}}
=
\frac{\left(\xi_{a_\infty}\right)_{kn_{k^2}}}{\left(\xi_{a_\infty}\right)_{k^2n_{k^2}}}
\geq k
\]
since, as is easy to verify, $(\zeta_{a_\infty})_m \geq
k(\zeta_{a_\infty})_{km}$ for any positive integers
$k, m$ and $\zeta \in (\ell^1)^*$.
Thus $(\xi_{a_\infty}) \ne (\eta^{(1)})$.  Similarly, for all $k \geq m$,
\[
\left(\frac{\eta^{(2)}}{D_m\xi_{a_\infty}}\right)_{k^2n_{k^2}}
\geq \left(\frac{\eta^{(2)}}{D_k\xi_{a_\infty}}\right)_{k^2n_{k^2}}
=
\frac{\left(\xi_{a_\infty}\right)_{n_{k^2}}}{\left(\xi_{a_\infty}\right)_{kn_{k^2}}}
\geq k,
\]
whence $(\eta^{(2)})\ne (\xi_{a_\infty})$. Since  $(\eta^{(1)})\subset
(\eta^{(1)})^{oo\infty}$
   and the latter is am-$\infty$ open and principal,
by \cite[Lemma 4.1(i)]{vKgW04-Traces} we can choose one of its generators
$\mu_{a_\infty}$ so that $\mu_{a_\infty} \geq \eta^{(1)}$.
Then for all $n_k \leq j \leq kn_k$, using $(\xi_{a_\infty})_{n_k}
\leq  2k(\xi_{a_\infty})_{kn_k}$,
\[
j(\mu_{a_\infty})_j     \geq    kn_k(\mu_{a_\infty})_{kn_k}
         \geq    kn_k\eta^{(1)}_{kn_k}
         =    kn_k(\xi_{a_\infty})_{kn_k}
         \geq   \frac{1}{2} n_k(\xi_{a_\infty})_{n_k}
         \geq    \frac{1}{2}  j(\xi_{a_\infty})_j,
\]
and for all $kn_k \leq j < n_{k+1}$ we have $j(\mu_{a_\infty})_j  \geq
j(\eta^{(1)})_j
= j(\xi_{a_\infty})_j$. Thus $\xi_{a_\infty} = O(\mu_{a_\infty} )$, hence $(\xi_{a_\infty}) \subset (\mu_{a_\infty}) = (\eta^{(1)})^{oo\infty}$, whence
$(\xi_{a_\infty}) = (\eta^{(1)})^{oo\infty}$.

Similarly, by the proof of \cite[Lemma 3.7]{vKgW04-Soft}, there is a generator $\nu_{a_\infty}$ for $(\eta^{(2)})^{o\infty}$
such that $\nu_{a_\infty} \le \eta^{(2)}$.
If $n_k \leq j \leq kn_k$, then
\[
     j(\nu_{a_\infty})_j     \leq    n_k( \nu_{a_\infty})_{n_k}
         \leq    n_k\eta^{(2)}_{n_k}
         =    n_k(\xi_{a_\infty})_{n_k}
         \leq    2kn_k(\xi_{a_\infty})_{kn_k}         \leq 2j(\xi_{a_\infty})_j.
\]
If $kn_k < j \leq n_{k+1}$, then $j(\nu_{a_\infty})_j  \leq
j\eta^{(2)}_j = j(\xi_{a_\infty})_j$.
Thus $\nu_{a_{\infty}}  = O(\xi_{a_{\infty}})$, hence $(\eta^{(2)})^{o\infty} = (\nu_{a_{\infty}}) \subset (\xi_{a_{\infty}})$, 
whence $(\eta^{(2)})^{o{\infty}} = (\xi_{a_{\infty}})$.
\end{proof}

\begin{remark}\label{R:subset existence from Varga}
\item[(i)] The existence of the sequence of indices $n_k$ constructed
in the above proof
can also be derived for case (i) from \cite[Lemma 1]{jV89}
  and for case (ii) from \cite[Theorem 4.12(v)]{vKgW04-Traces}.
\item[(ii)] In Proposition \ref{P:open'} the condition of the non
am-stability of the principal ideal $(\rho)$  is necessary.
Indeed if $(\rho)$ were am-stable and hence am-open,
$(\eta^{(1)})^{oo} = (\rho)$ or $(\eta^{(2)})^{o}=(\rho)$
would imply by Theorem \ref{T: I am-stable iff} that
$(\eta^{(1)})=(\eta^{(1)})^{oo}= (\rho)$ or
  $(\eta^{(2)})=(\eta^{(1)})^{o}= (\rho)$. The same holds for the
am-$\infty$ case with the same argument.
\end{remark}

\begin{theorem}\label{T: cancellation}
\item[(A)] Let $I\ne \{0\}$ be a principal ideal and let $J$ be an
arbitrary ideal. Then the following are equivalent
\item[(i)] $I$ is am-stable
\item[(ii)] $_aJ = \,_aI$  (or, equivalently, $ J^o = I^o$) implies $J = I$
\item[(iii)] $ J_a = I_a$  (or, equivalently, $ J^- = I^-$) implies $J = I$
\item[(iv)] $ J^{oo} = I^{oo}$  implies $J = I$
\item[(v)] $ J_- = I_-$  implies $J = I$

\item[(B)] Let $I\ne \{0\}$ be a principal ideal and let $J$ be an
arbitrary ideal. Then the following are equivalent
\item[(i)] $I$ is am-$\infty$ stable
\item[(ii)] $_{a_\infty}J =\, _{a_\infty}I$ (or, equivalently, $
J^{o\infty} = I^{o\infty}$) implies $J = I$.
\item[(iii)] $J_{a_\infty} =I_{a_\infty}$  (or, equivalently, $
J^{-\infty} = I^{-\infty}$) implies $J = I$
\item[(iv)] $ J^{oo\infty} = I^{oo\infty}$  implies $J = I$
\item[(v)] $J_{-\infty} = I_{-\infty}$  implies $J = I$
\end{theorem}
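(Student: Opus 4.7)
The strategy for (A) is to combine Lemma \ref{L:closed open} with the lower and upper density of $\mathscr{SPL}$ in $\mathscr{PL}$ (Theorem \ref{T: SPL density in PL}); part (B) will follow by systematically substituting the am-$\infty$ analogs from Theorem \ref{T: SinftyPL density}, Propositions \ref{P:count gen}(ii) and \ref{P:open'}(ii), and Theorem \ref{T: I am-inf stable iff}, together with the identity $(\eta)_{-\infty} = \,_{a_\infty}(\eta)$ for principal $(\eta)$ from \cite{vKgW04-Soft}. For each implication (i) $\Rightarrow$ (x), one of the two required inclusions comes directly from Lemma \ref{L:closed open} since am-stability makes $I$ both am-closed and am-open. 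The reverse inclusion in (iii) hinges on the following \emph{principal case}: if $(\eta) \subset I$ is principal and $(\eta)_a = I_a$, then $(\eta) = I$. Assuming $(\eta) \subsetneq I$, the lower density of $\mathscr{SPL}$ in $\mathscr{PL}$ produces an am-stable $(\zeta)$ with $(\eta) \subsetneq (\zeta) \subsetneq I$, and then $(\eta)_a \subset (\zeta)_a = (\zeta) \subsetneq I = I_a$, contradicting $(\eta)_a = I_a$.

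This principal case yields (iii) in general: from $J \subset I$ (supplied by Lemma \ref{L:closed open}(C)) and from $\rho \in \Sigma(I) = \Sigma(J_a)$, pick $\eta \in \Sigma(J)$ with $\rho = O(D_m \eta_a)$; then $(\eta) \subset J \subset I$ and $I = (\rho) \subset (\eta)_a \subset J_a = I$, so $(\eta)_a = I_a$ and the principal case forces $I = (\eta) \subset J$. The same choice of $\eta$ handles (iv), since $J \subset J^{oo} = I$ and $I \subset J^{oo} \subset J_a$ again produce a principal $(\eta) \subset J$ with $(\eta)_a = I_a$. For (ii) and (v) we establish the \emph{key claim}: if $(\eta)$ is principal and $\,_a(\eta) \subset I$, then $(\eta) \subset I$. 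Otherwise, apply the upper density of $\mathscr{SPL}$ in $\mathscr{PL}$ to $I \subsetneq (\rho + \eta)$ to obtain an am-stable $(\zeta)$ with $I \subsetneq (\zeta) \subsetneq (\rho + \eta)$; then, using \cite[Theorem 2.8]{vKgW04-Soft} for principal ideals and the ideal-additivity of the pre-arithmetic mean,
\[
(\zeta) = (\zeta)_- \subset (\rho + \eta)_- = \,_a(\rho + \eta) = \,_a(\rho) + \,_a(\eta) \subset I,
\]
contradicting $I \subsetneq (\zeta)$. Applied to every principal $(\eta) \subset J$, the claim yields $J \subset I$ in both (ii) and (v); the reverse inclusion is immediate since $\,_aJ \subset J$ gives $I = \,_aJ \subset J$ for (ii), and $J_- \subset J$ gives $I = J_- \subset J$ for (v).

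For the converse implications (ii)--(v) $\Rightarrow$ (i), assume $I$ is principal and not am-stable and construct a witness $J \ne I$ in each case. For (iii), Proposition \ref{P:count gen}(i) supplies a principal $J \supsetneq I$ with $J_a = I_a$ directly. For (v), take $J := I_-$: by \cite[Theorems 2.8, 2.11]{vKgW04-Soft}, $I$ principal and not am-stable is not am-closed, so $I_- \ne I$, while $(I_-)_- = I_-$ since $I_-$ is am-closed. For (ii) and (iv), split on whether $I$ is am-open: if $I$ is not am-open, then $I^o \subsetneq I$ and $I^{oo} \supsetneq I$, so $J := I^o$ works for (ii) (using the identity $\,_aI = \,_a(I^o)$) and $J := I^{oo}$ works for (iv); if $I$ is am-open, then $I = I^o = I^{oo}$ and Proposition \ref{P:open'} provides principal $(\eta^{(1)}) \subsetneq I \subsetneq (\eta^{(2)})$ with $(\eta^{(1)})^{oo} = I^o = I = I^{oo}$ and $(\eta^{(2)})^o = I$, so $J := (\eta^{(1)})$ works for (iv) and $J := (\eta^{(2)})$ works for (ii) via $\,_aJ = \,_a((\eta^{(2)})^o) = \,_aI$. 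The only step that does not rely directly on the density theorems is the ideal-additivity $\,_a(A+B) = \,_aA + \,_aB$ invoked in the key claim, a standard property of the pre-arithmetic mean; once this is granted, the density results of Section 3 do all the essential work.
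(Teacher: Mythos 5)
Your handling of (iii), (iv) and of all four converse implications is essentially correct and close to the paper's own argument (with harmless variations: you use $J^{oo}\subset J_a$ to funnel (iv) through the same principal-case mechanism as (iii), and Proposition \ref{P:count gen}(i) instead of $J=I^-$ for (iii)$\Rightarrow$(i)). The genuine gap is in (i)$\Rightarrow$(ii) and (i)$\Rightarrow$(v): your ``key claim'' --- for principal $(\eta)$, $\,_a(\eta)\subset I$ implies $(\eta)\subset I$ --- is false. Proposition \ref{P:open}, applied with $(\xi)=I$, produces for every ideal $I$ (in particular for every am-stable principal $I$) a principal ideal $(\zeta)\not\subset I$ with $\,_a(\zeta)\subset\,_aI\subset I$; indeed the paper stresses that the cancellation $\,_aJ\subset\,_aI\Rightarrow J\subset I$ fails for \emph{every} principal $I$. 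The step at which your proof of the claim breaks is the ``ideal-additivity'' $\,_a(\rho+\eta)=\,_a(\rho)+\,_a(\eta)$: only $\,_a(\rho)+\,_a(\eta)\subset\,_a(\rho+\eta)$ is automatic, and the reverse inclusion is not a standard property of the pre-arithmetic mean (unlike $I\mapsto I_a$, the preimage-type operation $I\mapsto\,_aI$ does not distribute over sums). In fact it must fail in general: granting it, your own chain $(\zeta)=(\zeta)_-\subset(\rho+\eta)_-=\,_a(\rho+\eta)$, combined with Theorem \ref{T: SPL density in PL}, would contradict Proposition \ref{P:open}. Since you reduce the hypotheses $\,_aJ=\,_aI$ and $J_-=I_-$ to the single-sub-ideal condition $\,_a(\eta)\subset I$, you discard exactly the information that makes these equalities cancellable, so no repair of the additivity step can rescue this route.

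The correct route for (ii) and (v) is the paper's: Lemma \ref{L:closed open} gives $I\subset J$ (since am-stable $I$ is am-open, resp.\ am-closed); if $I\subsetneq J$, use the upper density of $\mathscr{SPL}$ in all of $\mathscr L$ (Corollary \ref{C:density in L}(i) --- note $J$ is an arbitrary ideal, so density in $\mathscr{PL}$ alone does not suffice, which is presumably why you tried to pass to principal sub-ideals of $J$) to insert an am-stable principal $L$ with $I\subsetneq L\subsetneq J$; then $L=\,_aL\subset\,_aJ=\,_aI\subset I$ (resp.\ $L=L_-\subset J_-=I_-\subset I$), a contradiction. The same scheme with $\mathscr{S_\infty PL}$ settles (B)(ii),(v). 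Finally, in (B)(iii),(iv) the ``systematic substitution'' needs one extra step that your plan omits: Lemma \ref{L:closed open}(C$_\infty$),(O$_\infty$) require $J\subset\mathscr L_1$ (resp.\ $J\subset se(\omega)$), which the paper extracts from $J^{-\infty}\ne\mathscr L_1$ (resp.\ $J^{oo\infty}\ne se(\omega)$) via \cite[Lemma 3.16]{vKgW04-Soft} before the lemma can be applied.
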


\begin{proof}
As the am-case is similar and slightly simpler than the am-$\infty$ case, we prove only the latter. Recall from \cite [Corollary 4.10]{vKgW04-Traces} that $I$ is am-$\infty$ stable if and only if $I=\, _{a_\infty}I$, or, equivalently, if and only if $I\subset \mathscr L_1$ and $I = \,I_{a_\infty}$, in which case also
\[
I= I_{-\infty}=I^{-\infty}= I^{o\infty}=I^{oo\infty}\subset st_{a_{\infty}}(\mathscr{L}_1) \subsetneq \mathscr{L}_1.
\]
\noindent (i) $\Rightarrow $ (ii)-(v) Since $I$ is am-$\infty$ open (resp. am-$\infty$ closed),  if the hypothesis of (ii) (resp., (v)) are satisfied, then in either case, $I\subset J$ by  Lemma \ref {L:closed open}.  If the hypothesis of (iv) (resp., (iii)) are satisfied, then $J^{oo\infty} \ne se (\omega)$  (resp., $J^{-\infty} \ne \mathscr{L}_1$), hence by \cite [Lemma 3.16 (ii) ]{vKgW04-Soft}, $J\subset se(\omega)$   (resp., by  \cite [ Lemma 3.16 (i) ]{vKgW04-Soft}, $J\subset \mathscr{L}_1$ and thus in either case, $J\subset I$ by Lemma \ref {L:closed open}.

Now assume by contradiction that $I\neq J$.
Then in cases (ii) and (v), by the upper density of $\mathscr {S_{\infty} PL}$ in $\mathscr {L}$ (see Corollary \ref{C: count gen upper density}(ii) and Theorem \ref{T: SinftyPL density}) there is an ideal $L \in \mathscr {S_{\infty} PL}$ strictly between $I$ and $J$.
In cases (iii) (resp., (iv)), there is a principal ideal $(\eta) \subset J$ for which $(\eta)_{a_{\infty}}=I_{a_{\infty}}=I$ (resp., $(\eta)^{oo\infty} = I^{oo\infty}=I$).
Since $I\neq (\eta)$ by the assumption $I\neq J$,
by Theorem \ref{T: SinftyPL density} there is again an ideal $L \in \mathscr {SPL}$ strictly between $I$ and $(\eta)$.
Since $L=\,_{a_{\infty}}L=L_{-{\infty}}=L^{oo{\infty}}=L_{a_{\infty}}$, this yields an immediate contradiction for all four cases.
\noindent (ii),(iv) $\Rightarrow $ (i) Since $(I^{o{\infty}})^{o\infty} = I^{o\infty}$ (resp.,
$(I^{oo{\infty}})^{oo{\infty}} = I^{oo{\infty}}$), it
follows that $I$ is am-${\infty}$ open. The conclusion is now immediate from
Proposition \ref{P:open'}(ii).

\noindent (iii),(v) $\Rightarrow $ (i) If $I$ is not am-$\infty$ stable, then
it is not am-$\infty$  closed \cite[Theorem 3.5]{vKgW04-Soft} and so
it cannot coincide with $I^{-\infty}$ (resp., with  $I_{-\infty}$), but $(I^{-\infty})^{-\infty}
=I^{-\infty}$ (resp.,  $(I_{-\infty})_{-\infty}=I_{-\infty}$), contradicting the hypothesis.
\end{proof}

\noindent  Examples \ref{E:N} and \ref{E:L} show that the first order
equality cancellations of
Proposition \ref{T: cancellation} can fail for countably generated
am-stable ideals.
Indeed, $_aN=N_-$ is am-stable yet \linebreak
$_aN=\,_{a2}N \not \Rightarrow N=\,_aN$ and
$N_-=\,(N_-)_- \not \Rightarrow N=\,N_-$. Similarly,
$L_a = L^{oo}$ is am-stable yet $L_a=L_{a2}$ does not imply $L=L_a$ and
$ L^{oo}=( L^{oo}) ^{oo}$ does not imply $L=L^{oo}$.\\

First order cancellations involving inclusions are less simple even for principal ideals. \linebreak
As shown in Lemma \ref{L:closed open}, necessary and sufficient conditions on the ideal $I$ for the cancellations $_aI\subset\, _aJ \Rightarrow I\subset J$ (resp., $J_a\subset I_a \Rightarrow J\subset I$) are straighforward: $I$ must be am-open  (resp., am-closed).
The ``opposite'' implications, $_aJ\subset \,_aI \Rightarrow J\subset I$ and $I_a\subset J_a \Rightarrow I\subset J$ do not 
hold in general even when $I$ is principal and am-stable.
For the latter,  Corollaries \ref{C: 5} and \ref {C: 6} show that this cancellation fails for every $I = (\omega^p)$ ($0 < p < 1$) and Example \ref{E:7} provides a principal ideal where this cancellation does hold.
For the former, the cancellation fails for every principal ideal $I$, as shown by the next proposition which actually proves more.

\begin{proposition}\label{P:open} Let $I$ be an ideal. Then for every principal
  ideal $(\xi)$ there is a principal ideal $(\zeta) \not\subset (\xi)$
with $_a(\zeta) \subset \,_aI$.
\end{proposition}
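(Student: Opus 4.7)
The plan is first to recast the inclusion ${}_a(\zeta)\subset {}_aI$ in terms of am-interiors, reducing the problem to a purely intrinsic condition on $(\zeta)$, and then to exhibit $(\zeta)$ by an explicit step-function construction that has nothing to do with $I$.

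For the reduction, recall from Section \ref{S:2} the identity ${}_aL={}_a((_aL)_a)={}_a(L^o)$, valid for every ideal $L$. Combined with the monotonicity of the pre-am operation and the idempotence $(L^o)^o=L^o$ of the am-interior (which follows from the same identity), this yields that ${}_a(\zeta)\subset {}_aI$ holds if and only if $(\zeta)^o\subset I^o$. Moreover, every nonzero $\eta\in c_o^*$ satisfies $\eta_a\ge \eta_1\omega$, so $\eta_a\ne O(D_m\zeta)$ for any $m$ as soon as $\omega\ne O(D_m\zeta)$; consequently ${}_a(\zeta)=\{0\}$ whenever $(\omega)\not\subset (\zeta)$, and so $(\zeta)^o=\{0\}\subset I^o$ holds trivially. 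Thus the proposition reduces to exhibiting, for every principal ideal $(\xi)$, a principal ideal $(\zeta)$ with $(\omega)\not\subset (\zeta)$ and $(\zeta)\not\subset (\xi)$; the ideal $I$ plays no further role.

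To construct $\zeta$, the case $(\xi)=F$ is handled by $\zeta:=\omega^2$, so assume $\xi_i>0$ for all $i$. Using that $\xi_i\to 0$, inductively select integers $1\le n_1<n_2<\cdots$ satisfying $k^2 n_{k-1}\,\xi_{\lceil n_k/k\rceil}<1$ and $(k+1)\,\xi_{\lceil n_{k+1}/(k+1)\rceil}<k\,\xi_{\lceil n_k/k\rceil}$. Setting $a_k:=k\,\xi_{\lceil n_k/k\rceil}$, define $\zeta\in c_o^*$ by $\zeta_j:=a_k$ for $j\in(n_{k-1},n_k]$ (with $n_0:=0$); the two conditions force $\{a_k\}$ to be strictly decreasing to $0$. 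For fixed $m$, evaluating at $j=n_k$ with $k\ge m$ gives $(D_m\xi)_{n_k}=\xi_{\lceil n_k/m\rceil}\le\xi_{\lceil n_k/k\rceil}$ while $\zeta_{n_k}=a_k=k\,\xi_{\lceil n_k/k\rceil}$, so $\zeta_{n_k}/(D_m\xi)_{n_k}\ge k\to\infty$, whence $(\zeta)\not\subset (\xi)$. Dually, evaluating at $j=mn_{k-1}+1$, one has $\lceil j/m\rceil=n_{k-1}+1\in(n_{k-1},n_k]$, so $(D_m\zeta)_j=a_k$ and $\omega_j/(D_m\zeta)_j=1/((mn_{k-1}+1)a_k)$, which tends to $\infty$ because $mn_{k-1}a_k<m/k$. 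Hence $(\omega)\not\subset (\zeta)$, as required. The only real subtlety lies in the inductive bookkeeping needed to simultaneously enforce monotonicity of $\{a_k\}$ and the decay $n_{k-1}a_k\to 0$, both being routine because $\xi_i\to 0$.
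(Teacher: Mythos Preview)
Your proof is correct and in fact streamlines the paper's argument. The paper splits into two cases according to whether ${}_aI\ne\{0\}$ (in which case it uses $\mathscr L_1\subset{}_aI$ and constructs $\zeta$ with ${}_a(\zeta)\subset\mathscr L_1$) or ${}_aI=\{0\}$ (in which case it constructs $\zeta$ with $(\omega)\not\subset(\zeta)$, forcing ${}_a(\zeta)=\{0\}$). You observe, via the inequality $\eta_a\ge\eta_1\omega$, that the second construction already gives ${}_a(\zeta)=\{0\}\subset{}_aI$ regardless of $I$, so the case split is unnecessary. Your step-function $\zeta$ is built along the same lines as the paper's second case but with different bookkeeping. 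One minor point: the opening reduction to the equivalence ${}_a(\zeta)\subset{}_aI\Leftrightarrow(\zeta)^o\subset I^o$, while correct, is not actually used---you go straight to ${}_a(\zeta)=\{0\}$---so that paragraph could be trimmed.
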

\begin{proof}
Assume first that $_aI \neq {0}$ and hence $\mathscr L_1 \subset \,_aI$
(by \cite[Lemma 6.2(iii)-(iv)]{vKgW04-Soft}).
Let $n_k$ be an increasing sequence of integers such that $\xi_ {\lceil\frac{n_{k+1}-1}{k}\rceil} \leq \frac{1}{kn_k}$.
Define the sequence $\zeta_j := \frac{1}{n_k}$ for $j\in[n_k, n_{k+1})$.
Then $\zeta \in \co*$ and since $\zeta_{n_{k+1}-1} \geq k(D_k\xi)_{n_{k+1}-1}$ for every $k$, it follows that $(\zeta) \not\subset (\xi)$.
Now let $\rho \in \Sigma (_a(\zeta))$, that is,  $\rho_a \in  \Sigma ((\zeta))$, and since $\rho_a$ satisfies the $\Delta_{1/2}$-condition,
assume without loss of generality that $\rho_a \leq \zeta$.
Then $\sum_{i=1}^{n_k}\rho_i = n_k(\rho_a)_{n_k} \leq 1$ and hence $\rho \in \ell^1$.
Thus $_a(\zeta) \subset \mathscr L_1 \subset \, _aI$.

For the case when $_aI =\{0\}$, let  $n_k$ be a strictly
increasing sequence of integers such that
$\xi_ {\lceil\frac{n_{k+1}-1}{k}\rceil} \leq \frac{1}{k^2n_k}$.
Define the sequence
$\zeta_j := \frac{1}{kn_k}$ for $j\in[n_k, n_{k+1})$. Then it is easy
to verify that $(\zeta) \not\subset (\xi)$ and
$(\omega) \not \subset (\zeta)$, and hence $_a(\zeta) = \{0\}$.
\end{proof}

As a consequence of Proposition \ref{P:open}, recalling from \cite[Theorem 2.9] {vKgW04-Soft} that $(\zeta)_- = \,_a(\zeta)$ for every principal ideal $(\zeta)$,
we see that the cancellation $J_-\subset I_-\Rightarrow J \subset I$ also fails for every principal ideal $I$.

The proof of Proposition \ref{P:open} can be adapted to show that
while principal ideals have principal am-interiors,
the converse does not hold.

\begin{example}\label{E:principal interior} There is a non-principal
ideal with principal (nonzero) am-interior.
\begin{proof}
Define the sequences $\zeta^{(p)}_j = \frac{1}{(2^pk)!}$ for $j\in
[(2^pk)!,\, (2^p(k+1))!)$ for every $p\in \mathbb N$.
  Then it is easy to verify that
  $(\omega)\subset (\zeta^{(p)})\subsetneq(\zeta^{(p+1)})$ for all
$p$, from which it follows that the ideal $I:= \bigcup_p (\zeta^{(p)})$
  is not principal.
On the other hand, if $\{0\}\ne (\rho_a) \subset (\zeta^{(p)})$ is an
 am-open ideal, and without loss of generality, $\rho_a \leq \zeta^{(p)}$, then the inequality $(2^pk)!(\rho_a)_{(2^pk)!} \leq 1$ shows that $\rho \in \ell^1$. 
Thus, as in the proof of the above proposition, $(\rho_a) = (\omega)$, that is, $( \zeta^{(p)})^{o}= (\omega)$.
By \cite[Theorem 2.17]{vKgW04-Soft}, $I^o = \bigcup_p (\zeta^{(p)})^o = (\omega)$, which is a principal ideal.
\end{proof}
\end{example}

Proposition  \ref{P:open} implies that for any given ideal $I$, there
is never a ``least upper bound'' principal ideal for the solutions $J$ of the inclusion $_aJ \subset\, _aI$. 
Intuitively, a candidate might be the union of such ideals but this, in general, is not an ideal.
In contrast however, the intersection of ideals being always an ideal, for every given ideal $I$, the ``greatest lower bound," $\widehat I$,
for the solutions of the inclusion  $J_a\supset I_a $,  i.e.,  the largest ideal for which \linebreak
$J_a\supset I_a \Rightarrow J\supset \widehat I$, is given by:

\begin{definition}\label{D:Gg} 
$\widehat{I}:=\bigcap\{J \mid J_a \supset I_a\}$
\end{definition}

\noindent We will identify $\widehat I $ for principal ideals and show that it is itself principal (when $I$ is principal) and that it may be strictly smaller than $I$ (even when $I$ is am-stable), though not always (see Corollary \ref{C: 6}(ii), Example \ref{E:7} and Proposition \ref{P:Gg is stable}). 

The motivation for considering $\widehat I $ comes from the still open question from \cite[Section 7]{DFWW}:
If the class $[I,B(H)]_1$ of single commutators of operators in $I$ with operators in $B(H)$,
contains a finite rank operator with nonzero trace, must $\Sigma(I)$ contain $\sqrt \omega$?
It was shown in \cite[Theorem 7.3]{DFWW} that $\Sigma(I^-) \ni \sqrt \omega$,
i.e., there is an $\eta \in \Sigma(I)$ with $\eta_a \geq  (\sqrt \omega)_a \asymp \sqrt \omega$.
However, we will show in Corollary \ref{C: 6}(ii) (for $p=\frac{1}{2}$) that the latter condition does not imply $(\eta) \supset (\sqrt \omega)$
but only that $(\eta) \supset (\omega)$ and that this ``lower bound" $(\omega)$ is sharp for this implication.
For general $\xi$ it can be shown that there is never a minimal $\widehat{\xi}$ 
for which $\eta_a\ge \xi_a$ implies $\eta \ge \widehat{\xi}$, 
but as Theorem \ref{T: 4}(i) shows there is a minimal one asymptotically.

\begin{definition}\label{D: 1}
For $\xi \in \text{c}_\text{o}^* \setminus \ell^1$,
let $\nu(\xi)_{n} := \min \{k \in \mathbb N \mid
\sum_{i=1}^{k}\xi_{i} \geq n\xi_1 \}$ and define
$\widehat{\xi} :=\, <(\xi_{a})_{\nu(\xi)_{n}} >$.
\end{definition}

\begin{lemma}\label{L: 2}
If $\xi \in \text{c}_\text{o}^* \setminus \ell^1$,
then $\widehat{\xi} \in \text{c}_\text{o}^*$ and $(1 -
\frac{1}{\nu(\xi)_{n}})\, \widehat{\xi}_{n}
< \frac{n}{\nu(\xi)_{n}}\xi_1 \leq \widehat{\xi}_{n}$.
\end{lemma}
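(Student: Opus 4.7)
The plan is to unpack Definition \ref{D: 1} carefully and read off both inequalities directly from the minimality built into $\nu(\xi)_n$, together with the monotonicity of the arithmetic mean $\xi_a$.

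First I would check that $\nu(\xi)_n$ is well-defined and sensible. Since $\xi \in \co^* \setminus \ell^1$, the partial sums $\sum_{i=1}^k \xi_i$ are unbounded, so for each $n$ the set $\{k \mid \sum_{i=1}^k \xi_i \geq n\xi_1\}$ is nonempty and has a least element; thus $\nu(\xi)_n$ is a well-defined positive integer. As $n$ increases the defining set shrinks, so $n \mapsto \nu(\xi)_n$ is nondecreasing, and since each partial sum $\sum_{i=1}^k \xi_i$ is finite while $n\xi_1 \to \infty$, we have $\nu(\xi)_n \to \infty$. Because $\xi_a \in \co^*$ is monotone nonincreasing and tends to zero, the composition $\widehat{\xi}_n = (\xi_a)_{\nu(\xi)_n}$ is monotone nonincreasing and tends to zero, so $\widehat{\xi} \in \co^*$.

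For the right-hand inequality $\frac{n}{\nu(\xi)_n}\xi_1 \leq \widehat{\xi}_n$, I would simply unwind the definition: by definition of $\nu(\xi)_n$,
\[
\nu(\xi)_n \, \widehat{\xi}_n = \nu(\xi)_n (\xi_a)_{\nu(\xi)_n} = \sum_{i=1}^{\nu(\xi)_n} \xi_i \geq n\xi_1,
\]
and dividing by $\nu(\xi)_n$ gives the claim.

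For the left-hand inequality $(1 - \tfrac{1}{\nu(\xi)_n})\widehat{\xi}_n < \tfrac{n}{\nu(\xi)_n}\xi_1$, equivalently $(\nu(\xi)_n - 1)\widehat{\xi}_n < n\xi_1$, the case $\nu(\xi)_n = 1$ is trivial (the left side is $0$ and $\xi_1 > 0$ since $\xi \notin \ell^1$). When $\nu(\xi)_n \geq 2$, the minimality of $\nu(\xi)_n$ forces
\[
\sum_{i=1}^{\nu(\xi)_n - 1} \xi_i < n\xi_1.
\]
Since $\xi_a$ is nonincreasing, $(\xi_a)_{\nu(\xi)_n - 1} \geq (\xi_a)_{\nu(\xi)_n} = \widehat{\xi}_n$, so
\[
(\nu(\xi)_n - 1)\widehat{\xi}_n \leq (\nu(\xi)_n - 1)(\xi_a)_{\nu(\xi)_n - 1} = \sum_{i=1}^{\nu(\xi)_n - 1} \xi_i < n\xi_1,
\]
which is the desired strict inequality. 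There is no real obstacle here; the proof is essentially a two-line verification once one uses both the defining inequality at $\nu(\xi)_n$ and its failure at $\nu(\xi)_n - 1$, combined with the monotonicity of $\xi_a$.
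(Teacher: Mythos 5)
Your proof is correct and follows essentially the same route as the paper's: the right inequality from the defining relation $\nu(\xi)_n\,\widehat{\xi}_n=\sum_{i=1}^{\nu(\xi)_n}\xi_i\ge n\xi_1$, and the left one from the minimality of $\nu(\xi)_n$ combined with $(\xi_a)_{\nu(\xi)_n-1}\ge\widehat{\xi}_n$, with your explicit case $\nu(\xi)_n=1$ covering what the paper handles by the empty-sum convention. The only cosmetic difference is that you get $\widehat{\xi}\to 0$ from the Ces\`aro property of $\xi_a$ and $\nu(\xi)_n\to\infty$, whereas the paper also records $\nu(\xi)_n\ge n$ and $\tfrac{n}{\nu(\xi)_n}\to 0$, facts it wants for the asymptotic remark that follows.
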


\begin{proof}
Since $\xi \notin \ell^1$, the distribution sequence $\nu(\xi)$ is
well-defined, nondecreasing, and tends to infinity since
$\nu(\xi)_{n} \geq n$ because
$n\xi_1 \leq \sum_{i=1}^{\nu(\xi)_n}\xi_{i} \leq \nu(\xi)_n \xi_1$.
Thus $\widehat{\xi} \in \co*$. For each fixed $m$ and all $n$ for
which $\nu(\xi)_n \geq m$,
\[n\xi_1 \leq \sum_{i=1}^{\nu(\xi)_n}\xi_{i} = \sum_{i=1}^{m-1}\xi_{i} +
  \sum_{i=m}^{\nu(\xi)_n}\xi_{i} \leq \sum_{i=1}^{m-1}\xi_{i} +
\nu(\xi)_n \xi_m\]
so
\[0 \leq \underset{n}{\overline{\lim}}\,\frac{n}{\nu(\xi)_n}\xi_1
\leq
\underset{n}{\overline{\lim}}\,\frac{\sum_{i=1}^{m-1}\xi_{i}}{\nu(\xi)_n}
+ \xi_m = \xi_m.\]
Since $m$ is arbitrary,  $\frac{n}{\nu(\xi)_n} \rightarrow 0$. Moreover,
\[
   (\nu(\xi)_{n} - 1)\, \widehat{\xi}_{n} \leq (\nu(\xi)_{n} -
1)\,(\xi_{a})_{\nu(\xi)_{n} - 1} =
  \sum_{i=1}^{\nu(\xi)_{n} - 1} \xi_{i} < n\xi_1 \leq
\sum_{i=1}^{\nu(\xi)_{n}} \xi_{i}
= \nu(\xi)_{n}\,\widehat{\xi}_{n},
\]
whence the claim.
\end{proof}

\noindent Notice that $\widehat{\xi}$ is asymptotic to $<\frac{n}{\nu(\xi)_n}\xi_1>$.
Also the following lemma is  left to the reader.

\begin{lemma}\label{L: 3} For all $\xi, \eta \in \co* \setminus
\ell^1$ and all $t >0$ we have
\item[(i)] $\nu(t\xi) =\nu(\xi)$ \quad and \quad  $\widehat{t\xi} = t\,\widehat{\xi}$
\item[(ii)] $\min \{\nu(\xi),\,\nu(\eta)\}\leq \nu(\xi+\eta) \leq \max \{\nu(\xi),\,\nu(\eta)\}$
\quad and \quad  $\widehat{\xi+\eta} \leq \widehat{\xi}+\widehat{\eta}$
\end{lemma}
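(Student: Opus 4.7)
For part (i), both identities are immediate unfoldings of the definitions. Since $(t\xi)_a = t\,\xi_a$ and $(t\xi)_1 = t\xi_1$, the condition $\sum_{i=1}^{k}(t\xi)_i \geq n(t\xi)_1$ reduces (cancelling $t>0$) to $\sum_{i=1}^{k}\xi_i \geq n\xi_1$, so the defining set for $\nu$ is unchanged, giving $\nu(t\xi) = \nu(\xi)$. Evaluating $(t\xi)_a$ at this common index then multiplies by $t$, so $\widehat{t\xi}_n = t(\xi_a)_{\nu(\xi)_n} = t\,\widehat{\xi}_n$.

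For the $\nu$ inequalities in (ii), set $m := \min\{\nu(\xi)_n,\nu(\eta)_n\}$ and $M := \max\{\nu(\xi)_n,\nu(\eta)_n\}$. At $k = M$ both defining inequalities $\sum_{1}^{k}\xi_i \geq n\xi_1$ and $\sum_{1}^{k}\eta_i \geq n\eta_1$ hold, so adding them gives $\sum_{1}^{M}(\xi_i+\eta_i) \geq n(\xi_1+\eta_1)$, which yields $\nu(\xi+\eta)_n \leq M$. For any $k < m$, both defining inequalities fail strictly (by minimality of $\nu(\xi)_n$ and $\nu(\eta)_n$), so $\sum_{1}^{k}(\xi_i+\eta_i) < n(\xi_1+\eta_1)$, giving $\nu(\xi+\eta)_n \geq m$.

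For the subadditivity $\widehat{\xi+\eta} \leq \widehat{\xi}+\widehat{\eta}$, I would write $N := \nu(\xi+\eta)_n$ and exploit linearity of the Cesaro average to obtain
\[
\widehat{\xi+\eta}_n \;=\; ((\xi+\eta)_a)_N \;=\; (\xi_a)_N + (\eta_a)_N .
\]
Because $\sum_{1}^{N}\xi_i + \sum_{1}^{N}\eta_i \geq n(\xi_1+\eta_1)$ at least one of the two individual inequalities $\sum_{1}^{N}\xi_i \geq n\xi_1$ and $\sum_{1}^{N}\eta_i \geq n\eta_1$ must hold, i.e. $N \geq \nu(\xi)_n$ or $N \geq \nu(\eta)_n$. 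Monotonicity of $\xi_a$ (or $\eta_a$) then yields the corresponding inequality $(\xi_a)_N \leq (\xi_a)_{\nu(\xi)_n} = \widehat{\xi}_n$ (or the analogous one for $\eta$).

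The main obstacle is the asymmetric case in which exactly one of these two comparisons, say $N \geq \nu(\xi)_n$ and $N < \nu(\eta)_n$, is available; then $(\eta_a)_N$ exceeds $\widehat{\eta}_n$ and one must absorb this ``overshoot'' against the slack $\widehat{\xi}_n - (\xi_a)_N \geq 0$ in the other coordinate. The plan here is to use the two-sided estimates from Lemma \ref{L: 2}, namely $n\xi_1/\nu(\xi)_n \leq \widehat{\xi}_n \leq n\xi_1/(\nu(\xi)_n - 1)$ and their analogues for $\eta$ and for $\xi+\eta$, together with the sandwiching $m \leq N \leq M$, to show that the total excess on one side is dominated by the available slack on the other. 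Concretely, since $\sum_{1}^{N-1}(\xi_i+\eta_i) < n(\xi_1+\eta_1)$ bounds $\widehat{\xi+\eta}_n$ above in terms of $(n+1)(\xi_1+\eta_1)/N$, this can be compared directly to $n\xi_1/\nu(\xi)_n + n\eta_1/\nu(\eta)_n$ via the sandwich bounds on $N$.
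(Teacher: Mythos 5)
Your part (i) and the two $\nu$-inequalities in part (ii) are fine, and since the paper leaves this lemma to the reader (and in fact only ever invokes part (i), in the proof of Theorem \ref{T: 4}), there is no paper proof to compare against; cancelling $t$, adding the two defining inequalities at $k=\max\{\nu(\xi)_n,\nu(\eta)_n\}$, and adding the two strict failures at $k<\min\{\nu(\xi)_n,\nu(\eta)_n\}$ is exactly the intended argument.

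The subadditivity claim is where your proposal stops being a proof, and the absorption plan you sketch cannot be completed. In the lopsided case $\nu(\xi)_n \le N:=\nu(\xi+\eta)_n < \nu(\eta)_n$ the overshoot $(\eta_a)_N-\widehat{\eta}_n$ is of order $\tfrac{n\eta_1}{N}-\tfrac{n\eta_1}{\nu(\eta)_n}$, while the slack $\widehat{\xi}_n-(\xi_a)_N$ can be exactly $0$ (take $\xi$ constant on a long initial block, so $\xi_a$ is constant there); your proposed comparison of $\tfrac{(n+1)(\xi_1+\eta_1)}{N}$ with $\tfrac{n\xi_1}{\nu(\xi)_n}+\tfrac{n\eta_1}{\nu(\eta)_n}$ fails precisely because the left side carries the full weight $\eta_1$ at scale $\tfrac1N\approx\tfrac1{\nu(\xi)_n}$ and the right side only at scale $\tfrac1{\nu(\eta)_n}$. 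Indeed the exact pointwise inequality is false, so no refinement can close the gap: fix $0<\delta<\tfrac19$, let $\xi_i=1$ and $\eta_1=1$, $\eta_i=\delta$ for $2\le i\le L$ with $L\ge 1+\lceil 1/\delta\rceil$, and continue both with slowly decaying nonsummable tails (e.g.\ $\xi_i=L/i$, $\eta_i=L\delta/i$ for $i>L$), so that $\xi,\eta\in \text{c}_\text{o}^*\setminus\ell^1$. At $n=2$ one gets $\nu(\xi)_2=2$, $\widehat{\xi}_2=1$, $\nu(\eta)_2=1+\lceil 1/\delta\rceil$, $\widehat{\eta}_2<3\delta$, while $\xi+\eta=(2,1+\delta,1+\delta,\dots)$ gives $\nu(\xi+\eta)_2=3$ and $\widehat{\xi+\eta}_2=\tfrac{4+2\delta}{3}>\tfrac43>1+3\delta>\widehat{\xi}_2+\widehat{\eta}_2$. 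What your case analysis together with Lemma \ref{L: 2} does prove is subadditivity up to a multiplicative constant: in the lopsided case $(\eta_a)_N<\tfrac{n\eta_1}{N}\le\tfrac{\eta_1}{\xi_1}\,\widehat{\xi}_n$, whence $\widehat{\xi+\eta}\le (1+\tfrac{\eta_1}{\xi_1})\widehat{\xi}+(1+\tfrac{\xi_1}{\eta_1})\widehat{\eta}$, and after normalizing $\xi_1=\eta_1$ (harmless by part (i)) $\widehat{\xi+\eta}\le 2(\widehat{\xi}+\widehat{\eta})$. This weaker form gives the inclusion $(\widehat{\xi+\eta})\subset(\widehat{\xi})+(\widehat{\eta})$, which is all that could matter at the ideal level, but the exact sequence inequality you set out to prove is not attainable by this (or any) argument.
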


\pagebreak

\begin{theorem}\label{T: 4}
Let $\xi \in \co* \setminus \ell^1$.
\item[(i)] If $\eta_a \geq \xi_a$ and $\eta \in \co*$, then for every
$\varepsilon > 0$,
$\eta_{n} \geq (1 - \varepsilon) \,\widehat{\xi}_{n}$
for $n$ sufficiently large.
\item[(ii)]
For every $\rho \in \co*\setminus \Sigma((\widehat \xi))$,
there is an $\eta \in \co*$ for which $\eta_a \geq \xi_a$ and
$\rho \notin \Sigma((\eta))$.
\end{theorem}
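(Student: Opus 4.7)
For part~(i), I would argue by contradiction: suppose $\eta_n < (1-\varepsilon)\widehat{\xi}_n$ for infinitely many $n$. Fix such an $n$ large enough that $\nu(\xi)_n > n$ (which holds eventually because $\xi \in \co*$ forces $\xi_n < \xi_1$ for $n$ large), and set $N := \nu(\xi)_n$. By definition $\sum_{i=1}^N \xi_i = N\widehat{\xi}_n \geq n\xi_1$, and the hypothesis $\eta_a \geq \xi_a$ at index $N$ gives $\sum_{i=1}^N \eta_i \geq N\widehat{\xi}_n$. Monotonicity of $\eta$ gives $\sum_{i=n+1}^N \eta_i \leq (N-n)\eta_n < (N-n)(1-\varepsilon)\widehat{\xi}_n$, so subtracting,
\[
\sum_{i=1}^n \eta_i \;>\; \widehat{\xi}_n\bigl(N\varepsilon + n(1-\varepsilon)\bigr) \;\geq\; \varepsilon\, n\xi_1 + n(1-\varepsilon)\widehat{\xi}_n,
\]
where the last inequality uses $N\widehat{\xi}_n \geq n\xi_1$. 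Dividing by $n$ yields $(\eta_a)_n > \varepsilon\,\xi_1$. Since $\eta \in \co*$, Ces\`aro's theorem forces $(\eta_a)_n \to 0$, contradicting this strict inequality for infinitely many $n$.

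For part~(ii), the plan is to construct $\eta$ tailored to $\rho$. Since $\rho \ne O(D_m\widehat{\xi})$ for every $m$, for each $k$ I can pick an index $i_k$ such that the ratio $\rho_{i_k}/\widehat{\xi}_{j_k}$ exceeds any preassigned value, where $j_k := \lceil i_k/k\rceil$; by passing to a subsequence, assume $j_k \uparrow \infty$ with consecutive $j_k$'s well separated in the sense that $j_{k+1}$ is large compared to $\nu(\xi)_{j_k}$. I then build $\eta$ as a decreasing piecewise-constant staircase whose plateaus sit at heights $c_k := C_k\widehat{\xi}_{j_k}$ on intervals $[j_k,b_k]$, interpolated between plateaus so that $\eta \in \co*$ and $\sum_{i=1}^N\eta_i \ge \sum_{i=1}^N\xi_i$ for every $N$. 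The plateau widths $b_k - j_k$ (chosen on the order of $\nu(\xi)_{j_k}$) together with the multipliers $C_k$ are fixed so that each plateau contributes roughly $S_\xi(b_k) - S_\xi(j_k-1)$ of mass, thereby preserving the partial-sum majorization; after the $C_k$ are pinned down by the construction, I go back and choose the $i_k$'s with $\rho_{i_k}/\widehat{\xi}_{j_k} > k\,C_k$, which is possible since the ratios produced by $\rho \ne O(D_k\widehat{\xi})$ are unbounded. Then for any fixed $m$ and $k \geq m$, monotonicity of $\eta$ gives $(D_m\eta)_{i_k} = \eta_{\lceil i_k/m\rceil} \leq \eta_{j_k} = c_k$, hence $\rho_{i_k}/(D_m\eta)_{i_k} \geq \rho_{i_k}/c_k > k \to \infty$, so $\rho \ne O(D_m\eta)$ for every $m$, i.e.\ $\rho \notin \Sigma((\eta))$.

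The main obstacle is the staircase construction for (ii). Part~(i) shows that no eligible $\eta$ can have $\eta_n \asymp \widehat{\xi}_n$ at infinitely many indices simultaneously, so the plateau multipliers $C_k$ are forced to grow; on the other hand, the $i_k$'s must be chosen with even faster-growing ratios $\rho_{i_k}/\widehat{\xi}_{j_k}$ to dominate $C_k$. Calibrating the three parameters---plateau heights $c_k$, plateau widths $b_k-j_k$, and spacing $j_{k+1}-b_k$---so that the partial sums never fall below those of $\xi$ requires a Potter-type inequality controlling $\widehat{\xi}$ relative to $\xi$, analogous to the regularity inputs used in Theorems~\ref{T: Delta1/2-PL strong density and strong gaps}, \ref{T: SPL density in PL} and \ref{T: SinftyPL density}; once such an inequality is extracted, verifying $\eta_a \geq \xi_a$ at intermediate values of $N$ within each plateau reduces to routine estimates.
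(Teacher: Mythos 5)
Your part (i) is correct and is essentially the paper's argument rearranged as a contradiction: both rest on comparing partial sums at $N=\nu(\xi)_n$, the monotonicity bound $\sum_{n+1}^{N}\eta_i\le (N-n)\eta_n$, the identity $N\widehat{\xi}_n=\sum_{1}^{N}\xi_i\ge n\xi_1$, and the fact that $(\eta_a)_n\to 0$.

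Part (ii), however, has a genuine gap: the entire difficulty of the theorem lies in the calibration you defer. You posit plateau heights $C_k\widehat{\xi}_{j_k}$ with $j_k=\lceil i_k/k\rceil$ and widths of order $\nu(\xi)_{j_k}$, and you say that verifying $\eta_a\ge\xi_a$ "requires a Potter-type inequality controlling $\widehat{\xi}$ relative to $\xi$" which, once "extracted," makes the estimates routine. No such inequality is available here: $\xi$ is an arbitrary element of $\text{c}_\text{o}^*\setminus\ell^1$, no regularity or $\Delta_{1/2}$ hypothesis is in force, and $\widehat{\xi}$ (which is asymptotic to $n\xi_1/\nu(\xi)_n$) can decay with arbitrarily wild irregularity, so no Potter bound with a fixed exponent holds in general. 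The paper's proof in fact needs nothing of the sort; its only quantitative input about $\widehat{\xi}$ is the elementary bound $\nu(\xi)_p\ge p\,\xi_1/\widehat{\xi}_p$ from Lemma \ref{L: 2}. What replaces your missing inequality is a different design of the staircase: the plateau heights are tied to $\rho$, namely $\eta_j=\frac1k\rho_{n_k}$ on $[m_k,m_{k+1})$ with $m_k=\lceil n_k/k\rceil$ (which kills $\rho\in\Sigma((\eta))$ outright since $\rho_{n_k}=k(D_k\eta)_{n_k}$), and the hypothesis $\rho\ne O(D_m\widehat{\xi})$ is invoked not with $m=k$ but with the very large ampliation factor $l_{k-1}=\lceil 4k(k-1)/\rho_{n_{k-1}}\rceil$, yielding $\frac1k\rho_{n_k}\ge\widehat{\xi}_{p_k}$ at the much earlier index $p_k=\lceil n_k/l_{k-1}\rceil$. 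That slack, together with the auxiliary conditions (growth of $n_k$, $\rho_{n_k}\le k^2/(2l_{k-1})$, $m_k>\nu(\xi)_{p_{k-1}}$), is what makes the two-case partial-sum verification (for $m_k\le j<\nu(\xi)_{p_k}$ and $\nu(\xi)_{p_k}\le j<m_{k+1}$) go through. Your plan, which only uses $D_k$-ampliation and heights proportional to $\widehat{\xi}_{j_k}$, leaves the existence of consistent choices of $C_k$, $b_k$ and spacings unproven; until that bookkeeping is actually done (or the paper's device of the $\rho$-dependent factor $l_{k-1}$ is imported), the proof of (ii) is incomplete.

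A smaller point: your assertion that part (i) forces the multipliers $C_k$ to grow is not justified. Part (i) only gives an asymptotic lower bound $\eta_n\ge(1-\varepsilon)\widehat{\xi}_n$; it does not preclude an admissible $\eta$ from being comparable to $\widehat{\xi}$ along the plateaus, so the claimed tension between growing $C_k$ and the choice of $i_k$ is not where the real obstruction lies. The real tension is between making $\eta$ small at $\lceil i_k/m\rceil$ for every $m$ (to exclude $\rho$) and keeping the running $\eta$-mass above the $\xi$-mass immediately after each plateau drop, and that is resolved in the paper by the choice of $l_k$ described above.
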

\begin{proof}
\item[(i)] By Lemma \ref{L: 3}(i), assume without loss of generality
that $\xi_1 = 1$. For every $n$ large enough
  so that $(\eta_{a})_{n} \leq \frac{\varepsilon}{2}$ and
$\nu(\xi)_{n} > \max(n, \frac{2}{\varepsilon})$, we have
\[
   n \leq \sum_{i=1}^{\nu(\xi)_{n}} \xi_{i} \leq
\sum_{i=1}^{\nu(\xi)_{n}} \eta_{i}
\leq \sum_{i=1}^{n} \eta_{i} + (\nu(\xi)_{n} - n)\eta_{n} \leq
\frac{\varepsilon}{2}n + (\nu(\xi)_{n} - n)\eta_{n}
\]
and thus
\[
\eta_{n} \geq (1 - \frac{\varepsilon}{2})\,\frac{n}{\nu(\xi)_{n} - n}
\geq (1 - \frac{\varepsilon}{2})\,\frac{n}{\nu(\xi)_{n}}
>    (1 - \frac{\varepsilon}{2})^{2}\, \widehat{\xi}_{n}> (1 -
\varepsilon)\, \widehat{\xi}_{n}
\]
where the third inequality follows from Lemma \ref{L: 2}.
\item[(ii)] Assume without loss of
generality that $\xi_1  = 1$ and, again by Lemma \ref{L: 3}(i),  that $ \rho_1 = 1$.
We construct inductively an increasing sequence of integers $n_k$ and three
derived sequences
  $l_k = \lceil \frac{4k(k + 1)}{\rho_{n_k}}\rceil$, $m_k:=
\lceil\frac{n_k}{k}\rceil$, and
$p_k:= \lceil\frac{n_k}{l_{k-1}}\rceil$ starting with $l_0 = n_1 =1$.
Assume the construction up to $k-1
\geq 1$. Since  $\rho \ne O(D_{l_{k-1}}\widehat \xi)$,
we can choose an integer $n_k$ large enough so that
\begin{itemize}
\item[(a)] $\rho_{n_k} \geq k (D_{l_{k-1}}\widehat \xi)_{n_k} = k(\widehat \xi)_{p_k}$
\item[(b)] $n_k \geq l_{k-1}$
\item[(c)] $\rho_{n_k} \leq \frac{k^2}{2l_{k-1}}$
\item[(d)] $m_k \geq 2m_{k-1}$
\item[(e)] $m_k > \nu(\xi)_{p_{k-1}}$
\end{itemize}
Now define the sequence $\eta_j:= \frac{1}{k}\rho_{n_k}$ for $m_k
\leq j < m_{k + 1}$.
Then $\eta \in \co*$ and $\rho \ne \Sigma((\eta))$ because
$\rho_{n_k} = k(D_k\eta)_{n_k}$ for all $k$.
  The key to prove that $\eta_a\geq \xi_a$ is the following lower
bound for $\nu(\xi)_{p_k}$:
\[
\nu(\xi)_{p_k} \geq \frac{p_k}{\widehat \xi_{p_k}} \geq
\frac{kn_k}{l_{k-1}\rho_{n_k}} \geq 2\frac{n_k}{k}> m_k,
\]
where the first inequality follows from Lemma \ref{L: 2}, the second
from (a) and the definition of ${p_k}$,
the third from (c) and the fourth from the definition of $m_k$. Thus by (e),
 $m_k <  \nu(\xi)_{p_k} < m_{k+1}$. If $m_k \leq j < \nu(\xi)_{p_k}$, then
\begin{align*}
\sum_{i=1}^{j}\eta_i - \sum_{i=1}^{j}\xi_i
&\geq \sum_{i=m_{k-1}}^{m_k-1}\eta_i - \sum_{i=1}^{\nu(\xi)_{p_k}-1}\xi_i  &\text{obvious}\\
&> (m_k-m_{k-1})\eta_{m_{k-1}}- p_k & \text{by the definition of $\nu(\xi)_{p_k}$}\\
&>  \frac {(m_k-m_{k-1})}{k-1}\rho_{n_{k-1}} - \frac{n_k}{l_{k-1}} -1
&\text{by the definitions  of $\rho$ and of $p_k$}\\
&> \frac{n_k}{2(k-1)k}\rho_{n_{k-1}} -2 \frac{n_k}{l_{k-1}} &\text{by (d) and (b) and since $m_k \ge \frac{n_k}{k}$}\\
&\geq  \frac{n_k}{2(k-1)k}\frac{4(k-1)k}{l_{k-1}} - 2\frac{n_k}{l_{k-1}} = 0
&\text{by the definition of $l_k$.}
\end{align*}
If now $\nu(\xi)_{p_k} \leq j < m_{k+1}$, then
\begin{align*}
\sum_{i=1}^{j}\eta_i - \sum_{i=1}^{j}\xi_i &=
\sum_{i=1}^{\nu(\xi)_{p_k}-1}\eta_i -
\sum_{i=1}^{\nu(\xi)_{p_k}-1}\xi_i
+ \sum_{i=\nu(\xi)_{p_k}}^{j}\eta_i - \sum_{i=\nu(\xi)_{p_k}}^{j}\xi_i\\
&\ge \sum_{i=\nu(\xi)_{p_k}}^{j}\eta_i - \sum_{i=\nu(\xi)_{p_k}}^{j}\xi_i &\text{by the above inequality}  \\
&\geq (j- \nu(\xi)_{p_k}+1)\left(\frac{1}{k}\rho_{n_k} -
\xi_{\nu(\xi)_{p_k}}\right) &\text{by the monotonicity of $\eta$ and $\xi$} \\
&\geq (j- \nu(\xi)_{p_k}+1)\left(\frac{1}{k}\rho_{n_k} -
(\xi_a)_{\nu(\xi)_{p_k}}\right)\\
&=  (j- \nu(\xi)_{p_k}+1)\left(\frac{1}{k}\rho_{n_k} -
\widehat{\xi}_{p_k}\right) \geq 0 &\text{by the definition of $\widehat{\xi}$ and (a)}.
\end{align*}
Therefore $\eta_a\geq \xi_a$, which completes the proof.
\end{proof}

Theorem \ref{T: 4} permits us to compute $\widehat{I}$ (see Definition \ref{D:Gg}) for every principal ideal $I$.

\begin{corollary}\label{C: 5}
Let $0 \ne \xi \in \co*$, then
$\widehat{(\xi)} =
\begin{cases}
F &\text{\quad for~$\xi \in \ell^1$}\\
(\widehat\xi) &\text{\quad for~$\xi\notin \ell^1$}
\end{cases}$
\end{corollary}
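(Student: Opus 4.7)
The plan is to reduce the corollary to the two halves of Theorem \ref{T: 4}, with the summable case handled separately by a soft argument. The two cases split exactly according to whether or not $\widehat{\xi}$ is defined: $\xi \notin \ell^1$ versus $\xi \in \ell^1$.

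For $\xi \in \ell^1$, first I would observe that $(\xi)_a = (\omega)$, since the elementary chain $\xi_1 \omega_n \leq (\xi_a)_n \leq \|\xi\|_1 \omega_n$ gives $\xi_a \asymp \omega$. The key point is then that every nonzero ideal $J$ automatically satisfies $J_a \supset (\omega)$: for any nonzero $\eta \in \Sigma(J)$ one has $\eta_a \geq \eta_1 \omega$, so $\omega \in \Sigma(J_a)$. Hence the intersection in Definition \ref{D:Gg} ranges over \emph{all} nonzero ideals, and this intersection equals $F$, the smallest nonzero ideal of $\mathscr{L}$.

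For $\xi \notin \ell^1$, the inclusion $(\widehat{\xi}) \subset \widehat{(\xi)}$ will follow directly from Theorem \ref{T: 4}(i): given any $J$ with $J_a \supset (\xi)_a$, pick $\eta \in \Sigma(J)$ and $C > 0$ with $\xi_a \leq C\eta_a = (C\eta)_a$, and apply Theorem \ref{T: 4}(i) to $C\eta$ with $\varepsilon = 1/2$ to conclude $\widehat{\xi} = O(\eta)$, so $(\widehat{\xi}) \subset (\eta) \subset J$; intersecting over such $J$ gives $(\widehat{\xi}) \subset \widehat{(\xi)}$. For the reverse inclusion $\widehat{(\xi)} \subset (\widehat{\xi})$, I would argue contrapositively: given $\rho \in \co* \setminus \Sigma((\widehat{\xi}))$, Theorem \ref{T: 4}(ii) produces $\eta \in \co*$ with $\eta_a \geq \xi_a$ and $\rho \notin \Sigma((\eta))$, so the principal ideal $(\eta)$ sits in the intersection defining $\widehat{(\xi)}$ (because $(\eta)_a = (\eta_a) \supset (\xi_a) = (\xi)_a$), but excludes $\rho$, hence $\rho \notin \Sigma(\widehat{(\xi)})$.

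The main obstacle has already been absorbed into Theorem \ref{T: 4}; the corollary itself is essentially a translation from sequences to ideals. The only mild bookkeeping is the rescaling step in the forward direction that upgrades $\xi_a = O(\eta_a)$ (the direct consequence of $\xi_a \in \Sigma(J_a)$) to $(C\eta)_a \geq \xi_a$, which is legitimate thanks to Lemma \ref{L: 3}(i).
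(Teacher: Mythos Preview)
Your proof is correct and follows essentially the same approach as the paper's: the summable case via $(\xi)_a = (\omega) = F_a$, and the non-summable case by reading the two inclusions directly off of Theorem~\ref{T: 4}(i) and (ii). The only quibble is that your closing appeal to Lemma~\ref{L: 3}(i) is misplaced---that lemma concerns $\widehat{t\xi}$, whereas the rescaling you need is just the linearity $(C\eta)_a = C\eta_a$ together with the fact that $\Sigma(J)$ is a cone---but this does not affect the argument.
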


\begin{proof}
If $\xi \in \ell^1$, then $(\xi_a) = (\omega) = F_a$, thus $\widehat{(\xi)} = F$.
If $\xi \ne \ell^1$, then for every ideal $J$ such that $J_a \supset
(\xi)_a$, there is an
  $\eta \in \Sigma(J)$ such that $\xi_a \leq \eta_a$.
By Theorem \ref{T: 4}(i), $( \widehat{\xi} ) \subset (\eta) \subset J$, hence
$( \widehat{\xi} ) \subset  \widehat{(\xi)} $.
By Theorem \ref{T: 4}(ii), for every $\rho \in \co* \setminus
\Sigma((\widehat \xi))$, there
  is an $\eta \in \co*$ such that $(\eta)_a \supset (\xi)_a$ but $\rho
\notin \Sigma((\eta))$.
But then $\rho \notin \Sigma(\bigcap \{ J \mid J_{a} \supset
(\xi)_{a}\}) = \Sigma(\widehat{(\xi)})$,
whence the claim.
\end{proof}

\begin{corollary}\label{C: 6}
\item[(i)]
\[ \widehat{(\omega)^p} =
\begin{cases}
F &\text{for $p>1$}\\
(<\frac{n}{e^n}>) &\text{for $p=1$}\\
(\omega)^{p'} &\text{for $0<p<1$, where $\frac{1}{p} -\frac{1}{p'}=1$}
\end{cases}
\]
\item[(ii)] $\widehat{(\omega)^p}$ is am-stable if and only if
$0<p<\frac{1}{2}$.
\end{corollary}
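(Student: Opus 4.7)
The plan is to reduce Corollary \ref{C: 6} entirely to Corollary \ref{C: 5} together with routine asymptotic estimates on partial sums of $p$-series, and then to Corollary \ref{C: higher regularities} (or directly to the definition of regularity) for the am-stability statement.

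For part (i), I would split into the three regimes of $p$ according to summability. When $p > 1$, $\omega^p \in \ell^1$, so Corollary \ref{C: 5} immediately gives $\widehat{(\omega)^p} = F$. For $p \le 1$, $\omega^p \notin \ell^1$, so Corollary \ref{C: 5} reduces everything to computing the explicit sequence $\widehat{\omega^p}_n = ((\omega^p)_a)_{\nu(\omega^p)_n}$ where $\nu(\omega^p)_n = \min\{k \mid \sum_{i=1}^k i^{-p} \ge n\}$. For $p=1$, since $\sum_{i=1}^k \tfrac{1}{i} = \log k + O(1)$, one gets $\nu(\omega)_n \asymp e^n$, and $(\omega_a)_k = H_k/k \asymp (\log k)/k$, so $\widehat{\omega}_n \asymp n/e^n$. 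For $0 < p < 1$, the Euler--Maclaurin-type estimate $\sum_{i=1}^k i^{-p} = \tfrac{k^{1-p}}{1-p} + O(1)$ gives $\nu(\omega^p)_n \asymp ((1-p)n)^{1/(1-p)}$, and $((\omega^p)_a)_k \asymp \tfrac{1}{(1-p)\,k^p}$, hence
\[
\widehat{\omega^p}_n \asymp \frac{1}{\nu(\omega^p)_n^{\,p}} \asymp \frac{1}{n^{\,p/(1-p)}} = \omega^{p'}_n,
\]
since $1/p - 1/p' = 1$ is exactly $p' = p/(1-p)$. This yields the three cases displayed in (i).

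For part (ii), I would invoke the characterization from Section \ref{S:2} that a principal ideal $(\rho)$ is am-stable if and only if $\rho$ is regular, and check each case of (i). When $p > 1$, $\widehat{(\omega)^p} = F$ is not am-stable because $F_a = (\omega) \ne F$. When $p=1$, the generator $\langle n/e^n\rangle$ is in $\ell^1$, and a direct estimate shows $(\langle n/e^n\rangle)_a \asymp \omega$ (since $\sum_{i=1}^\infty i/e^i$ converges to a positive constant), so it is far from regular. When $0 < p < 1$, $\widehat{(\omega)^p} = (\omega^{p'})$ with $p' = p/(1-p)$, and $\omega^{p'}$ is regular precisely when $0 < p' < 1$, which happens iff $p/(1-p) < 1$, i.e.\ $p < 1/2$. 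Combining the three cases gives the equivalence claimed in (ii).

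The only non-mechanical step is the asymptotic analysis for $0 < p < 1$: the identification $\widehat{\omega^p} \asymp \omega^{p'}$ hinges on the exact exponent $p/(1-p)$ surviving the composition of $\nu(\omega^p)_n$ and the arithmetic mean. I expect this to go through smoothly with the standard comparison $\sum_{i=1}^k i^{-p} = \tfrac{k^{1-p}}{1-p} + O(1)$, but one must keep track that we only need asymptotic equivalence $\asymp$ (not equality), which is enough to determine the principal ideal generated. No additional machinery beyond Corollary \ref{C: 5} is required.
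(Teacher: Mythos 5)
Your proposal is correct and follows essentially the same route as the paper: reduce to Corollary \ref{C: 5}, compute $\nu(\omega^p)_n$ asymptotically in each regime of $p$, and then test regularity of the resulting generator case by case. The only cosmetic difference is that you estimate $\widehat{\omega^p}_n=((\omega^p)_a)_{\nu(\omega^p)_n}$ directly from partial-sum asymptotics, whereas the paper invokes Lemma \ref{L: 2} to get $\widehat{\xi}_n\asymp n/\nu(\xi)_n$; both give the same $\asymp$-class and hence the same principal ideal.
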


\begin{proof}
\item[(i)] For $p>1$, $\omega^p$ is summable and hence, by Corollary \ref{C: 5}, $\widehat{(\omega)^p} =F$.
A routine computation shows $\nu(\omega)_n \asymp e^n$ and therefore, again by Corollary \ref{C: 5} and Lemma \ref{L: 2}, one has \linebreak
$\widehat{(\omega)}=(\widehat{\omega}) =(<\frac{n}{e^n}>)$. 
Similarly, for $0<p<1$, $(\nu(\omega)^p)_n \asymp n^{\frac{1}{1-p}}$, hence on has \linebreak 
$\widehat{(\omega)^p}=(<\frac{n}{n^{\frac{1}{1-p}}}>) = (\omega^{p'})$.
\item[(ii)] The ideals $F$ and $(<\frac{n}{e^n}>)$ are clearly not am-stable, and $(\omega)^{p'}$ is am-stable if and only if $0<p'<1$.
\end{proof}

So, in particular $\widehat{(\omega)^{1/2}} = (\omega)$. By
definition, $\widehat{(\xi)}\subset (\xi)$, but as the
case of $(\omega)^{1/2}$ illustrates, the inclusion can be proper
even when $\xi$ is regular, i.e., when $(\xi)$ is
am-stable. As the following proposition shows, the inclusion is
certainly proper when $(\xi) \neq F$ is not am-stable.

\begin{proposition}\label{P:Gg is stable} If $F\neq (\xi) = \widehat{(\xi)}$ with $\{0\}\neq\xi \in \co* $, then $(\xi) =(\xi)_a$.
\end{proposition}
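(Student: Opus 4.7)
The plan is to argue by contrapositive: assume $(\xi)\neq F$ is not am-stable and produce a witness showing $\widehat{(\xi)}\neq(\xi)$. First reduce to the non-summable case: if $\xi\in\ell^1$ then $\widehat{(\xi)}=F$ by Corollary \ref{C: 5}, contradicting $F\neq(\xi)=\widehat{(\xi)}$; so $\xi\notin\ell^1$ and $\widehat{(\xi)}=(\widehat{\xi})$. Now suppose for contradiction that $(\xi)=(\widehat{\xi})$ but $(\xi)$ is not am-stable. Since $(\xi)$ is principal, non-am-stability is equivalent to $\xi$ being irregular, hence $\xi_a\notin\Sigma((\xi))$. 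By the hypothesis $\Sigma((\xi))=\Sigma((\widehat{\xi}))$, so also $\xi_a\notin\Sigma((\widehat{\xi}))$.

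Apply Theorem \ref{T: 4}(ii) with $\rho:=\xi_a\notin\Sigma((\widehat{\xi}))$ to obtain $\eta\in\co*$ with $\eta_a\ge\xi_a$ pointwise and $\xi_a\notin\Sigma((\eta))$. The pointwise inequality gives $(\eta)_a\supset(\xi)_a$, so by Definition \ref{D:Gg}, $\widehat{(\xi)}\subset(\eta)$; combined with $(\xi)=\widehat{(\xi)}$ this forces $(\xi)\subset(\eta)$, i.e., $\xi\le CD_s\eta$ for some $C,s$. The desired contradiction will come from showing that the present data also force $\xi_a\in\Sigma((\eta))$, contradicting the choice of $\eta$.

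To produce this missing implication I would combine three ingredients. (a) Theorem \ref{T: 4}(i) applied to $\eta$ (whose am majorizes $\xi_a$ by construction) yields $\eta_n\ge(1-\varepsilon)\widehat{\xi}_n$ for $n$ large; (b) the hypothesis $(\xi)\subset(\widehat{\xi})$ gives $\xi_n\le K'\widehat{\xi}_{\lceil n/m'\rceil}$, and composing with (a) gives directly $\xi_n\le (K'/(1-\varepsilon))\,\eta_{\lceil n/m'\rceil}$ (recovering $\xi\in\Sigma((\eta))$); and (c) the identity $\widehat{\xi}_a\asymp\xi_a$ pointwise, which follows from $(\widehat{\xi})_a=(\widehat{\xi}_a)=(\xi_a)=(\xi)_a$ together with the fact that both $\widehat{\xi}_a$ and $\xi_a$ are arithmetic means and hence satisfy the $\Delta_{1/2}$-condition. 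Applying the arithmetic mean to the inequality in (b) and then invoking (c) should give the sequence-level bound $\xi_a\le C'D_{s'}\eta$ rather than only the ideal-level bound $\xi_a=O(D_{s'}\eta_a)$ automatic from $(\xi)\subset(\eta)$.

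The hard step is precisely passing from the am-level bound $(\xi)_a\subset(\eta)_a$ to the required pointwise ampliation bound $\xi_a\le C'D_{s'}\eta$. I expect this will require revisiting the explicit block structure of $\eta$ in the proof of Theorem \ref{T: 4}(ii}—where $\eta_j=(1/k)\rho_{n_k}=(1/k)(\xi_a)_{n_k}$ on blocks $j\in[m_k,m_{k+1})$—and exploiting $\widehat{\xi}_a\asymp\xi_a$ on the scale set by these blocks, so that a block-by-block comparison yields $\xi_a\le C'D_{s'}\eta$. Once that pointwise bound is established, $\xi_a\in\Sigma((\eta))$ contradicts the construction of $\eta$, completing the contrapositive and proving the proposition.
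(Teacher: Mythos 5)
Your reduction to $\xi\notin\ell^1$, the observation that irregularity gives $\xi_a\notin\Sigma((\xi))=\Sigma((\widehat\xi))$, and the application of Theorem \ref{T: 4}(ii) with $\rho=\xi_a$ to get $\eta$ with $\eta_a\ge\xi_a$, $\xi_a\notin\Sigma((\eta))$, and hence (under the hypothesis) $(\xi)\subset(\eta)$, are all fine. The problem is the step you yourself flag as ``the hard step'': deriving the pointwise ampliation bound $\xi_a\le C'D_{s'}\eta$. That bound is not merely unproved --- it is provably false for the $\eta$ produced by Theorem \ref{T: 4}(ii), independently of any hypothesis on $\xi$. Indeed the construction there sets $\eta_j=\frac1k\rho_{n_k}$ on $[m_k,m_{k+1})$ with $m_k=\lceil n_k/k\rceil$, so $(\xi_a)_{n_k}=\rho_{n_k}=k(D_k\eta)_{n_k}\ge k(D_m\eta)_{n_k}$ for all $k\ge m$; this is exactly how the theorem guarantees $\rho\notin\Sigma((\eta))$, and it rules out $\xi_a=O(D_m\eta)$ for every $m$. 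Consequently no ``block-by-block comparison'' using the genuine structure of that $\eta$ together with your items (a)--(c) can yield the inequality you need; to extract a contradiction you would have to exploit the (false) hypothesis $(\xi)=(\widehat\xi)$ in an essentially new way --- e.g.\ by showing it forces $\widehat\xi_a=O(\widehat\xi)$ --- which is tantamount to reproving the proposition and is not supplied by your sketch. Merely having $(\xi)\subset(\eta)$ with $\xi_a\notin\Sigma((\eta))$ is of course no contradiction (take $\eta=\xi$).

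For comparison, the paper does not route through Theorem \ref{T: 4}(ii) at all at this point. Assuming $\xi$ irregular, it chooses indices $n_k$ with $(\xi_a)_{n_k}\ge k^2\xi_{n_k}$, sets $m_k=\max\{j:\xi_j>k\xi_{n_k}\}$, and builds a two-valued block sequence $\eta$ (equal to $\xi_{n_{k-1}}$ on $[n_{k-1},\lceil m_k/k\rceil)$ and to $\xi_{n_k}$ on $[\lceil m_k/k\rceil,n_k]$) for which a direct computation gives $\eta_a\ge\frac12\xi_a$ pointwise, hence $(\eta)_a\supset(\xi)_a$ and so $(\eta)\supset\widehat{(\xi)}$ by Definition \ref{D:Gg}, while $\xi_{m_k}>k(D_k\eta)_{m_k}$ shows $(\xi)\not\subset(\eta)$, contradicting $(\xi)=\widehat{(\xi)}$. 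That explicit construction --- an $\eta$ whose arithmetic mean dominates $\xi_a$ but which fails to absorb $\xi$ itself --- is the core content of the proposition, and it is precisely what is missing from your proposal.
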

\begin{proof}
Assume otherwise. Then there is an increasing sequence of integers $n_k$, starting with $n_1=1$, such that $(\xi_a)_{n_k} \geq k^2\xi_{n_k}$ and for $k > 1$, 
$n_k$ can be chosen large enough so that for   $m_k:= \max\{j\in \mathbb{N} \mid\ \xi_j >k\xi_{n_k}\}$, $m_k \ge 2k n_{k-1}$, and also
$(\xi_a)_{\lceil \frac{m_k}{k} \rceil} \le\frac{1}{2k}\xi_{n_{k-1}}$.
First we show that $\sum_{i=1}^{n_k}\xi_i \leq 2\sum_{i=1}^{m_k}\xi_i$. Indeed,
\[
\sum_{i=m_k+1}^{n_k}\xi_i
\leq n_k\xi_{m_k+1}
\leq kn_k\xi_{n_k}
\leq \frac{n_k}{k}(\xi_a)_{n_k}
= \frac{1}{k}\sum_{i=1}^{n_k}\xi_i,
\]
thus
\[
\sum_{i=1}^{n_k}\xi_i
= \sum_{i=1}^{m_k}\xi_i + \sum_{i=m_k+1}^{n_k}\xi_i
\leq  \sum_{i=1}^{m_k}\xi_i + \frac{1}{k}\sum_{i=1}^{n_k}\xi_i,
\]
and hence
$\sum_{i=1}^{n_k}\xi_i
\leq \frac{1}{1-\frac{1}{k}}\sum_{i=1}^{m_k}\xi_i \leq 2\sum_{i=1}^{m_k}\xi_i$.
Now, define the sequence
\[
\eta_j=
\begin{cases}
\xi_{n_{k-1}} \quad&\text{for \quad $j\in[n_{k-1},
\lceil\frac{m_k}{k} \rceil)$}\\
\xi_{n_k}  &\text{for \quad $j\in [\lceil\frac{m_k}{k} \rceil, n_k]$}
\end{cases}.
\]
Clearly,  $\eta\in\co*$  and if $j\in [\lceil\frac{m_k}{k} \rceil, n_k],$ then
\begin{align*}
j(\eta_a)_j
&=\sum_{i=1}^{j}\eta_i
\geq \sum_{i=n_{{k-1}+1}}^{\lceil\frac{m_k}{k} \rceil}\eta_i
= (\lceil\frac{m_k}{k} \rceil - n_{k-1})\xi_{n_{k-1}}
\geq \frac{1}{2}\lceil\frac{m_k}{k} \rceil 2k (\xi_a)_{\lceil
\frac{m_k}{k}\rceil}\\
&\geq m_k (\xi_a)_{\lceil \frac{m_k}{k} \rceil}
\geq m_k(\xi_a)_{m_k}
\geq \frac{1}{2}\sum_{1}^{n_k}\xi_i
\geq  \frac{1}{2}\sum_{1}^{j}\xi_i
= \frac{1}{2}j(\xi_a)_j.
\end{align*}
By using this inequality, if
$j\in[n_{k-1}, \lceil\frac{m_k}{k} \rceil]$,
then also
\[
j(\eta_a)_j
= \sum_{i=1}^{n_{k-1}}\eta_i+\sum_{i=n_{k-1}+1}^{j}\eta_i
\geq \frac{1}{2}\sum_{i=1}^{n_{k-1}}\xi_i+\sum_{i=n_{k-1}+1}^{j}\xi_i
\geq \frac{1}{2}\sum_{i=1}^{j}\xi_i
= \frac{1}{2}j(\xi_a)_j.
\]
This proves that $(\eta)_a\supset (\xi)_a$ and hence $(\eta)\supset\widehat{(\xi)}$.
On the other hand,
\[
\xi_{m_k} >    k \xi_{n_k}
= k\eta_{ \lceil\frac{m_k}{k} \rceil}
= k(D_k\eta)_{m_k}
\]
and hence $(\eta) \not\supset (\xi)$, that is, $(\xi) \ne
\widehat{(\xi)}$, against the hypothesis.
\end{proof}

It remains to show that there exist principal ideals $ I=\widehat{I}$
with $I\neq F$.

\begin{example}\label{E:7} Let $\xi_j = \frac{1}{k}$ for $j\in
((k-1)!, k!]$. Then $(\xi) = \widehat{(\xi)}$.
\end{example}
\begin{proof}
It is enough to show that if  $\eta \in \co*$ and $(\xi) \not\subset (\eta)$, then $(\xi)_a \not\subset (\eta)_a$.
Assume without loss of generality that $\eta_1 = 1$. 
Since $\xi \not\in \Sigma((\eta))$, choose an increasing sequence $r_j\in \mathbb{N}$ for which $\xi_{r_j} \geq j\eta_{\lceil\frac{r_j}{j}\rceil}$.
Let $r_j\in ((k_j-1)!, k_j!]$ and assume without loss of generality that $k_j$ is strictly increasing. 
Then $\frac{1}{k_j}=\xi_{k_j!} = \xi_{r_j}\geq j\eta_{\lceil\frac{r_j}{j}\rceil} \geq j\eta_{\frac{k_j!}{j}}$ and hence
\[
(k_j+1)!(\eta_a)_{(k_j+1)!}
\leq \sum_{i=1}^{\frac{k_j!}{j}}\eta_1 + \sum_{i=\frac{k_j!}{j}+1}^{(k_j+1)!}\eta_{\frac{k_j!}{j}}
\leq \frac{k_j!}{j}+(k_j+1)!\eta_{\frac{k_j!}{j}}
\le (1+\frac{k_j+1}{k_j})\frac{k_j!}{j}
\le   3\frac{k_j!}{j}.
\]

On the other hand,
\[
(k_j+1)!(\xi_a)_{(k_j+1)!}
\geq \sum_{i=k_j!+1}^{(k_j+1)!}\frac{1}{k_j+1}
=\frac{k_j}{k_j+1}k_j!
\ge \frac{1}{2} k_j!.
\]
Combining the two inequalities, $(\xi_a)_{(k_j+1)!} \ge  \frac{j}{6}(\eta_a)_{(k_j+1)!}$, whence $\xi_a \neq O(\eta_a)$ and hence $(\xi_a) \not\subset (\eta_a)$.
\end{proof}


\begin{thebibliography}{99}


\bibitem{Aljan-Aran77}
Aljan{\v{c}}i\'c ,  S. and  Arandelovi\'c, D. 
     $O$-regularly varying functions,
     Publ. Inst. Math. (Beograd) (N.S.)~
     {\bf 22(36)}~(1977), 5--22.

\bibitem{gAlS78}
Allen, G. D. and Shen, L. C., \textit{On the structure of principal ideals of operators,} Trans. AMS \textbf{238} (1978), pp.~253--270.

\bibitem{BGT89}
Bingham, N. H. , Goldie, C. M., and Teugels, J. L.,
\textit{Regular Variation,} Encyclopedia of Mathematics and its
Applications (27), Cambridge University Press, 1989.  

\bibitem{aB89}
Blass, A., \textit{Applications of superperfect forcing and its
relatives,} Set Theory and its Applications, Lecture Notes in
Mathematics 1401, Springer-Verlag (1989), 18-40.

\bibitem{aB90}
Blass, A., \textit{Groupwise density and related cardinals,} Arch.
Math. Logic 30 (1990) 1-11.


\bibitem{aBgW78}
Blass, A. and Weiss, G.,  \textit{A Characterization and Sum
Decomposition of Operator Ideals,} Trans. Amer. Math. Soc.
\textbf{246} (1978), 407-417.

\bibitem{BPS71} Brown, A., Pearcy, C., and Salinas, N.,
\textit{Ideals of compact operators on Hilbert space,} Trans. Amer.
Math. Soc. \textbf{18} (1971), 373--384.

\bibitem{jC41}
Calkin, J. W., \textit{Two-sided ideals and congruences in the ring
of bounded operators in Hilbert space,}
Ann. of Math. (2) \textbf{42} (1941), pp.~839--873.

\bibitem{aC94}
Connes, A., \textit{Non Commutative Geometry,} San Diego Academic Press, 1994.

\bibitem{DFWW}
Dykema, K., Figiel, T., Weiss, G., and Wodzicki, M., \textit{The
commutator structure of operator ideals,}
Adv. Math., 185/1 pp. 1--79.

\bibitem{kDnK98}
Dykema, K. J. and Kalton, N. J., \textit{Spectral characterization of
sums of commutators. {I}{I},} J. Reine Angew. Math. \textbf{504}
(1998), pp.~127--137.

\bibitem{kDnK05}
Dykema, K. J. and Kalton, N. J., \textit{Sums of commutators in ideals and modules of type {II} factors}
Ann. Inst. Fourier (Grenoble) \textbf{55} 3 (2005), pp.~931--971.

\bibitem{kDgWmW00}
Dykema, K., Weiss, G. and Wodzicki, M., \textit{Unitarily invariant
trace extensions beyond the trace class,}
Complex analysis and related topics (Cuernavaca, 1996), Oper. Theory
Adv. Appl. \textbf{114} (2000), pp.~59--65.


\bibitem{vKgW02}
Kaftal, V. and Weiss, G., \textit{Traces, ideals, and arithmetic
means,} Proc. Natl. Acad. Sci. USA \textbf{99} (11) (2002),
pp.~7356--7360.

\bibitem{vKgW04-Traces}
Kaftal, V. and Weiss, G., \textit{Traces on operator ideals and
arithmetic means,} preprint.

\bibitem{vKgW04-Soft}
Kaftal, V. and Weiss, G., \textit{Soft ideals and arithmetic mean
ideals,} IEOT, to appear.

\bibitem{vKgW04-2nd order and cancellation}
Kaftal, V. and Weiss, G., \textit{2nd order arithmetic means and
cancellation for operator ideals,} Operators and Matrices (OAM), to appear.

\bibitem{vKgW07-OT21}
Kaftal, V. and Weiss, G., \textit{A survey on the interplay between arithmetic mean ideals, traces, lattices of operator ideals, and an infinite Schur-Horn majorization theorem}, Proceedings of the 21st International Conference on Operator Theory, Timisoara 2006 (Theta Bucharest), to appear

\bibitem{vKgW07-Majorization}
Kaftal, V. and Weiss, G., \textit{Majorization for infinite sequences, an extension of the Schur-Horn Theorem, and operator ideals}, in preparation.

\bibitem{nK87}
Kalton, N. J., \textit{Unusual traces on operator ideals,} Math.
Nachr. \textbf{134} (1987), pp.~119--130. 

\bibitem{nK89}
Kalton, N. J., \textit{Trace-class operators and commutators,} J.
Funct. Anal. \textbf{86} (1989), pp.~41--74.

\bibitem{nK98}
Kalton, N. J., \textit{Spectral characterization of sums of
commutators {I},} J. Reine Angew. Math. \textbf{504} (1998),
pp.~115--125.

\bibitem{hM01}
Mildenberger, H.,
\textit{Groupwise dense families,} Arch. Math. Logic \textbf{40}
(2001), no. 2, pp. 93--112.

\bibitem{nS74}
Salinas, N., \textit{Symmetric norm ideals and relative conjugate
ideals,} Trans. Amer. Math. Soc. \textbf{188} (1974), pp.~213--240.


\bibitem{jV89}
Varga, J., \textit{Traces on irregular ideals,} Proc. Amer. Math.
Soc. \textbf{107} 3 (1989), pp.~715--723.

\bibitem{gW75}
Weiss, G., \textit{Commutators and Operators Ideals,} dissertation
(1975), University of Michigan Microfilm.


\bibitem{gW80}
Weiss, G., \textit{Commutators of {H}ilbert-{S}chmidt operators
{I}{I},} IEOT \textbf{3} (4) (1980), pp.~574--600.

\bibitem{gW86}
Weiss, G., \textit{Commutators of {H}ilbert-{S}chmidt operators
{I},} IEOT \textbf{9} (6) (1986), pp.~877--892.

\bibitem{gW05}
Weiss, G., \textit{B(H)-commutators: a historical survey,} 
Recent advances in operator theory, operator algebras, and their applications, pp.~307--320, Oper. Theory Adv. Appl., 153, BirkhŠuser, Basel, 2005.

\bibitem{mW94}
Wodzicki, M., \textit{Algebraic {$K$}-theory and functional analysis,}
First European Congress of Mathematics, Vol.\ II (Paris,1992)
\textbf{120} (1994), pp.~485--496. 


\bibitem{mW02}
Wodzicki, M., \textit{Vestigia investiganda,} Mosc. Math. J.
\textbf{4} (2002), pp.~769--798, 806. 

\end{thebibliography}
\end{document}